% ----------------------------------------------------------------
% AMS-LaTeX Paper ************************************************
% **** -----------------------------------------------------------
%\documentclass[nohypdvips]{siamart}
\documentclass[12pt,draft]{amsart}
\usepackage{graphicx}
\usepackage{amsmath,amsbsy}
\usepackage{amssymb, enumerate,color}
\usepackage{comment}
\usepackage{mathabx}
\usepackage{mathrsfs}      % selects Times Roman as basic font
\usepackage{helvet}         % selects Helvetica as sans-serif font
\usepackage{courier}        % selects Courier as typewriter font
\usepackage{type1cm}        % activate if the above 3 fonts are

\includecomment{versiona}

\definecolor{ForestGreen} {cmyk}{0.91,0,0.88,0.12}
% COMANDI COLORE TESTO
 % magenta
 % nero
 % blu
\newcommand{\rosso}[1]{{\color{red} #1}} %rosso
\newcommand{\verde}[1]{{\color{ForestGreen} #1}} %verde

\long\def\COMMENT#1{\vskip14pt\par\vbox{\vskip2pt\hrule\vskip14pt\rosso{#1}\vskip14pt\hrule\vskip2pt}\vskip14pt} %%
% \long\def\NOTE#1{{\par\small\color{blue}  {\noindent{\bf Nota}.~ #1 }\par\medskip}} %%
\long\def\NOTE#1{{}}

\usepackage{geometry,calc}
\setlength{\topmargin}{-0.25in}
\setlength{\textheight}{8.9in}%{20.5cm}
\setlength{\oddsidemargin}{0.0in}
\setlength{\evensidemargin}{0.0in}

%\DeclareFontFamily{OT1}{pzc}{}
%\DeclareFontShape{OT1}{pzc}{m}{it}%
%             {<-> s * [0.900] pzcmi7t}{}
%\DeclareMathAlphabet{\mathscr}{OT1}{pzc}%
%                                 {m}{it}

% ----------------------------------------------------------------
\vfuzz2pt % Don't report over-full v-boxes if over-edge is small
\hfuzz2pt % Don't report over-full h-boxes if over-edge is small
% THEOREMS -------------------------------------------------------
\newtheorem{thm}{Theorem}[section]
\newtheorem{cor}[thm]{Corollary}
\newtheorem{lem}[thm]{Lemma}
\newtheorem{prop}[thm]{Proposition}

\newtheorem{pb}[thm]{Problem}
\newtheorem{assmp}[thm]{Assumption}
\theoremstyle{remark}
\newtheorem{rem}[thm]{Remark}
\numberwithin{equation}{section}
% MATH -----------------------------------------------------------

\usepackage{esint}

\renewcommand{\O}{\Omega}

\newcommand{\Real}{\mathbb R}

\newcommand{\grad}{\nabla}

\newcommand{\Zperp}{\widehat\Lambda}

\newcommand{\Cs}{c_s}

\newcommand{\Astrong}{\mathscr{A}}
\newcommand{\Vfull}{\mathscr{V}}
\newcommand{\ffull}{\mathsf{f}}
\newcommand{\ufull}{\mathsf{u}}
\newcommand{\vfull}{\mathsf{v}}
\newcommand{\wfull}{\mathsf{w}}

\newcommand{\TG}{\mathcal{K}_\Gamma}

\newcommand{\Thl}{\mathcal{T}_\delta^k}
\newcommand{\Gl}{\Gamma_k}
\newcommand{\El}{\mathcal{K}^k}

\newcommand{\hatV}{\widehat V}
\newcommand{\Rk}{\opR _k}
\newcommand{\opA}{A}
\newcommand{\opB}{B}
\newcommand{\opR}{\mathscr{R}}
\newcommand{\opG}{G}
\newcommand{\opS}{S}

\newcommand{\Poly}[1]{\mathbb{P}_{#1}}

\newcommand{\smo}{r} % smoothness parameter was "s"
\newcommand{\Vsp}{W^{\smo}}
\newcommand{\Ls}{\Xi^{\smo}}
\newcommand{\Lt}{\Xi^t}

\newcommand{\carrierAk}{C_k}

\newcommand{\xid}{\xi_\delta}

\newcommand{\Lh}{\Lambda_\delta}
\newcommand{\tLh}{\widetilde \Lambda_\delta}
\newcommand{\hLh}{\widehat \Lambda_\delta}

\newcommand{\Range}[1]{\mathfrak{R}(#1)}

\newcommand{\Bd}{B_\delta}
\newcommand{\Gd}{G}

\newcommand{\Bh}{\Bd}
\newcommand{\opNorm}[3]{\| #3 \|_{#1\to#2}}

\newcommand{\Vh}{V_\delta}
\newcommand{\Wh}{V_h}

\newcommand{\hSh}{\widehat S_h}

\newcommand{\Vscal}{{d}}

\newcommand{\tAh}{\widetilde A_\delta}

\newcommand{\dsc}[2]{[#1,#2]_\delta}

\newcommand{\tLhs}{\tilde \Lambda_{\hS}}

\newcommand{\lh}{\lambda_\delta}
\newcommand{\mh}{\mu_\delta}

\newcommand{\hO}{h}
\newcommand{\hS}{\delta}
\newcommand{\thS}{\tilde \hS}

\newcommand{\tpiS}{\tilde\pi_\delta}

\newcommand{\xh}{\xi_\delta}
\newcommand{\sh}{s_h}
\newcommand{\eh}{e_h}

\newcommand{\hM}{\widehat \M}

\newcommand{\Embed}{\mathcal K}
\newcommand{\tphi}{{\widetilde\varphi}}
\newcommand{\hlambda}{\widehat \lambda}
\newcommand{\hmu}{\widehat\mu}

\newcommand{\tpsi}{\widetilde \psi}
\newcommand{\hm}{\m^*}

\newcommand{\Inject}{\mathcal{I}}
\newcommand{\Restrict}{\mathcal{R}}
\newcommand{\Id}[1]{\mathbf{1}_{{#1}}}

\newcommand{\Ah}{A_\delta}

\newcommand{\Lm}{\Lambda^\ell}

\newcommand{\kplus}{{k^+_\ell}}
\newcommand{\kminus}{{k^-_\ell}}
\newcommand{\boundary}[1]{\partial(#1)}
\newcommand{\neighbors}[1]{\mathcal{N}(#1)}

\newcommand{\cVsplit}{c^\flat}
\newcommand{\CVsplit}{C^\flat}

\newcommand{\tsigma}{\sigma^*}
\newcommand{\Qsigma}{Q_\sigma^*}
\newcommand{\QsigmaT}{(\Qsigma)^T}

\newcommand{\Qd}{Q_d}
\newcommand{\ta}{\widetilde a}

\newcommand{\normta}[1]{\| #1 \|_{\ta}}

\newcommand{\D}{\mathcal{D}}

\newcommand{\G}{\Gamma}

%\renewcommand{\delta}{{h_\Lambda}}

%\newcommand{\dim}{\mathrm{dim}}

% ----------------------------------------------------------------
\begin{document}

\title[Black Box Coupling]{An abstract framework for heterogeneous coupling: stability, approximation and preconditioning}%
\author{Silvia Bertoluzza}%
\address{IMATI-CNR}%
\email{silvia.bertoluzza@imati.cnr.it}%

\author{Erik Burman}%
\address{Department of Mathematics, University College London, United Kingdom}%
\email{e.burman@ucl.ac.uk}%

%\thanks{}%
\subjclass{65N55}
\keywords{Coupling methods; FETI; domain decomposition; FEM/BEM coupling; mixed dimensional problems}
%\date{}%
%\dedicatory{}%
%\commby{}%
% ----------------------------------------------------------------
\begin{abstract}
We consider heterogeneous coupling problems on an abstract level, establishing fundamental principles of domain decomposition agnostic to the solvers of the local subproblems.
Introducing a coupling framework reminiscent of FETI methods, but here on abstract form, we establish conditions for stability and minimal requirements for well-posedness on the continuous level, as well as conditions on local solvers for the approximation of subproblems. We then discuss stability of the resulting Lagrange multiplier methods and show stability under a mesh condition between the local discretizations and the mortar space. If this condition is not satisfied we show how a stabilization, acting only on the multiplier can be used to achieve stability. The design of preconditioners of the Schur complement system is discussed in the unstabilized case. Finally we discuss some applications that enter the framework.

\end{abstract}
\maketitle
% ----------------------------------------------------------------

\section{Introduction}

The efficient approximation of linear problems set in infinite dimensional spaces is becoming increasingly important. A key example is the numerical approximation of solutions to partial differential equations (PDE) or integral equations, that are omni present in science and technology.  
Discretization results in linear systems that become increasingly large as the scale and the complexity of the problem grow. In view of the increasing availability of high performance computing environments, it is then advantageous to resort to solution strategies leveraging, possibly iteratively, local solvers for sets of (suitably coupled) subproblems.  

The common  approach in this framework is to discretize the global continuous problem into a  possibly huge algebraic system, and then split the latter as a system of coupled algebraic subsystems. In this case the coupling condition often consists in the identification of unknowns in the subsystems corresponding to the same global unknown. An alternative approach, which is lately gaining increasing interest, is, instead, to decompose the original problem already at the continuous/infinite dimensional level, thus obtaining a system of coupled infinite dimensional subproblems, to be successively discretized. This is the approach that underlies a number of non conforming domain decomposition methods, such as the mortar method \cite{bernardi2001spectral,belgacem1999mortar,Wohl00}, the three fields domain decomposition method \cite{brezzi2001error,bertoluzza2003analysis}, or the domain decomposition method  based on local Dirichlet--Neumann maps proposed in \cite{steinbach2005natural}. %, which all start from a domain decomposition of the continuous problem. 

A key feature of this approach, that we will exploit herein, is that the subproblems can use different models and be discretized independently of each other using bespoke solvers that communicate only through the coupling conditions. This offers the possibility of resorting to independent implementations for the solvers of the different subproblems, an appealing feature, particularly  in the framework of multiphysics and multiscale simulations, where highly specialized and optimized  subproblem solvers might be available, and where the whole coupled problem might be excessively complex. 
Indeed,
 in this context, we see a growing interest around the different issues related to ``code coupling" \cite{keyes2013multiphysics,groen2013survey,longshaw2017code}, and the development of different computational environments aimed at enabling the interaction of existing solvers, individually simulating different portions of a larger complex physical problems (see, for instance, \cite{tang2015multiscale, bungartz2016precice,veen2020easing}). 

The aim of this paper is to provide a theoretical foundation for such approaches where heterogeneous large scale problems are solved using different highly optimized codes to handle the different subproblems. Therefore we study a strategy, agnostic to the different solvers and subproblems, 
in an abstract framework. The problem, the subproblems and coupling are not a priori associated to a physical space and model, and therefore a wide range of applications enter the framework. %Beyond classical domain decomposition, for example the coupling of boundary integral operators and partial differential equations, mixed dimensional systems of partial differential equations or multiscale problems. 
For the local solvers we only assume that they produce a sufficiently accurate approximation of {the coupling quantities}.  %their target quantities.
Following an approach  inspired by the widely used Finite Element Tearing and Interconnecting (FETI) method \cite{farhat1991method,FCR94,FMR94,LM97},  by formally solving the local problems, up to a possible non-trivial, but finite dimensional, kernel of the local operator, the system is  reduced to a problem  where only the unknowns enforcing the coupling between subproblems, that is the Lagrangian multipliers, are to be computed. While the FETI method was developed in the framework of conforming domain decomposition for PDEs, the analysis herein applies to a large class of other problems, including multiscale models \cite{BD98}, FEM/BEM coupling \cite{BF19}, or PDEs set on lower dimensional manifolds connected in a network. It also provides a roadmap to techniques for solution of the resulting algebraic system. 

Our abstract starting point allows us to remain agnostic as to the local approximation strategy to the furthest possible extent: discretization of the local subproblem consists, in our abstract famework, in selecting a non-specified ``local solver". This important feature of our approach   
%indeed, 
%with  high performance computing pushing the limits of computational performance, comes a widespread interest, even beyond the framework of multiphysics/multiscale simulation, 
allows the coupling of highly optimized codes with solvers that use different methods, for instance finite difference methods, finite element methods, spectral methods or boundary element methods.
 By carrying out our analysis in an abstract framework, independent of the discretization methodology, we establish the weakest conditions that such solvers must satisfy in order to guarantee the well posedness of the resulting global method. To establish such minimal conditions we analyze the properties of the reduced coupling system, for which well-posedness is proven at  the continuous level. This continuous framework is then exploited when local discretizations are introduced, which we treat as ``black boxes'', and conditions  are proposed that are sufficient (but not necessary) to establish uniform stability of the discretization. To cater for the situation where such conditions, which implicitely  enforce a compatibility condition between local solvers and multiplier discretization,
 can not be satisfied or lead to inconvenient discretizations, we introduce stabilization methods in the spirit of \cite{BF01}, generalizing the ideas of \cite{BH10, Bu14}.
We establish optimal error estimates for the discretization and we discuss the design of preconditioners for the unstabilized reduced system. A particularly salient feature of the black box framework is that local solvers based on non-coercive formulations can be preconditioned using alternative coercive formulations. This observation points to a new class of preconditioners for saddle point problems based on local stabilised formulations. The two canonical examples of domain decomposition, Neumann-Neumann and Dirichlet-Dirichlet coupling are used as applications to illustrate the theory.

\section{Problem setting}
We consider an abstract problem of the form
\begin{equation}\label{abstractStrong}
\Astrong \ufull = \ffull, \quad \text{ in } \Vfull',
\end{equation}
where $\Vfull$ is a separable Hilbert space, $\Astrong : \Vfull \to \Vfull'$ denotes a boundedly invertible  linear operator, and where $f \in \Vfull'$ is the given right hand side. 
We assume that such a problem can be split as a system of ``local'' subproblems on smaller Hilbert spaces $V_k$, $k=1,\ldots,N$ subject to a coupling constraint. 
%\COMMENT{More precisely, we let $\Lambda$  denote a second separable Hilbert space, and we assume that bilinear forms 
%	$b_k: V_k \times \Lambda \to \Real$ are given such that, letting  $\opB _k : V_k \to \Lambda'$ denote the corresponding linear form:
%	\[
%	\langle \opB _k u , \mu \rangle = b_k (u,\mu), \qquad \forall \mu \in \Lambda,
%	\]
%	there exist restriction operators  $\Rk: \Vfull \to V_k$, $k = 1,\cdots,N$, such that the composite restriction operator $\opR: \Vfull \to \prod_{k=1}^N$, defined as
%	\[
%	\opR  = \left(
%	\begin{array}{c}
%		\opR _1\\
%		\vdots\\
%		\opR _N
%	\end{array}
%	\right), \]
%	is an isomorphism of $\Vfull$ onto the space $\hatV \subseteq V =  \prod_{k=1}^N V_k$ defined as
%	\[
%	\hatV = \ker B = \{ v = (v_k)_k \in V : \sum_k \opB _k v_k = 0 \ \text{ in } \Lambda'\}.
%	\]
%}
More precisely, we assume that we are given injective ``restriction'' operators  $\Rk: \Vfull \to V_k$, $k = 1,\cdots,N$, and we define the composite operator $\opR$ which maps $\Vfull$  into $\prod_{k=1}^N V_k$,  as
	\[
	\opR  = \left(
	\begin{array}{c}
		\opR _1\\
		\vdots\\
		\opR _N
	\end{array}
	\right). \]
We then assume that there exist operators $\opB _k : V_k \to \Lambda'$, $\Lambda$ being  a second separable Hilbert space, such that the operator $\opR$
is an isomorphism of $\Vfull$ onto the space $\hatV \subseteq V =  \prod_{k=1}^N V_k$, defined as
\[
\hatV =  \{ v = (v_k)_k \in V : \sum_k \opB _k v_k = 0 \ \text{ in } \Lambda'\}.
\]
We let $b_k: V_k \times \Lambda \to \Real$ denote the   bilinear form corresponding to $\opB_k$:
\[ b_k (u,\mu) =
\langle \opB _k u , \mu \rangle, \qquad \forall \mu \in \Lambda.
\]
{Above and in the following we use the notation $\langle \cdot,\cdot \rangle$ to denote different duality pairings (the spaces paired follow from the context in each case).}

\

Assume now that continuous bilinear forms $a_k : V_k \times V_k \to \Real$ and linear operators $f_k \in V_k'$ are given,  such that for all $\vfull, \wfull \in \Vfull$ it holds that
\[
\langle \Astrong \vfull, \wfull \rangle = \sum_{k=1}^N a_k (\Rk \vfull, \Rk \wfull ), \qquad \langle \ffull, \wfull \rangle = \sum_{k=1}^N \langle f_k,\Rk \wfull \rangle,
\]
and let $\opA _k: V_k \to V_k'$ denote the linear operator corresponding to $a_k$:
\[
\langle \opA _k u, v \rangle = a_k(u,v), \qquad \forall u, v \in V_k.
\]
We assume that $\ker \opA_k$ is finite dimensional and coincides with $\ker \opA_k^T$, and that $a_k$ is coercive on the subspace \[\carrierAk = (\ker \opA_k)^\perp \subset V_k.\]

Letting
\[
\opA  = \left[
\begin{array}{ccc}
  \opA _1 & 0 & 0\\
  0& \ddots & 0 \\
  0 & 0 & \opA _N
\end{array}
\right], \qquad  \opB  = [\opB _1, \cdots ,\opB _N], \qquad f = \left[
\begin{array}{c}
f_1\\
\vdots\\
f_N
\end{array}
\right],
\]
we can  finally rewrite Problem \ref{abstractStrong} in the following saddle point form
\begin{pb}\label{saddle1} Find $u \in V$ and $\lambda \in \Lambda$
such that
\begin{eqnarray}
\label{eq:saddle1}  \opA  u - \opB ^T \lambda  &=& f, \\
\label{eq:saddle2}  \opB  u &=& 0.
\end{eqnarray}
\end{pb}

\

Letting $\| \cdot \|_V$ denote the product norm in $V$, we assume that an inf-sup condition for $\opB $ holds, of the form\footnote{Here and in the following we use the notation $ x \lesssim y$ to indicate that the quantity $x$ is lower that the quantity $y$ times a positive constant independent on any relevant  parameter, particularly the ones related to the forthcoming discretizations.}
\begin{equation}\label{infsupbk}
\inf_{\lambda \in \Lambda} \sup_{v = (v^k) \in V} \frac{b(v,\lambda)}{\| v \|_{V} \| \lambda \|_\Lambda} \gtrsim 1, \qquad \text{ where } b(v,\lambda) = \sum_k b_k(v^k,\lambda).
\end{equation}
We do not assume that $\opA _k$ is invertible for all $k$, however the well posedness of equation \eqref{abstractStrong}, together with the above inf-sup condition, implies the well posedness of Problem \ref{saddle1}.

\begin{rem}
Observe that one can also consider a slightly more general form of Problem \ref{saddle1}, where $g \in \Lambda'$ and $C:\Lambda \to \Lambda'$ is a positive semi definite linear operator. 
\begin{pb}\label{pb:Robin1} Find $u \in V$ and $\lambda \in \Lambda$
such that
\begin{eqnarray}
\label{eq:Robin1}  \opA  u - \opB ^T \lambda  &=& f, \\
\label{eq:Robin2}  C \lambda + \opB  u &=& g.
\end{eqnarray}
\end{pb}
In the PDE context this allows to include Robin type interface coupling conditions between subdomains. The below discussion of well posedness and discretization extends directly to this case.
\end{rem}

%\rosso{
%	\begin{rem}\label{rem:red_cont}
%		Observe that in many relevant cases, it
%		
%		
%		
%		Observe that satisfaction of \eqref{infsupbk} does not necessarily involve all the forms $b_k$. Only a set of subproblems sufficiently large to control the coupling variable $\lambda$ are required. We will denote an admissible minimal set by $\Xi$ for future reference. Note that the sets $\Xi$ by necessity are non-unique.
%	\end{rem}
%}

\

In order to define the  numerical method aimed at solving our problem, we follow, already at the continuous level, the approach underlying the FETI  domain decomposition method \cite{farhat1991method}. We start by eliminating the unknown $u$. 
To this aim for all $k$ we introduce a bounded linear pseudo inverse $\opA ^+_k$ of $\opA_k$, $\opA_k^+ : V_k' \to \carrierAk \subset V_k$,
defined as follows
\begin{equation}\label{eq:pseudo_inverse_def}
a_k(\opA ^+_k g , w) = \langle g , w \rangle, \qquad \text{ for all }w \in \carrierAk.
\end{equation}
Letting $\Range A$ denote the range of $A$, such a pseudo inverse is well defined and verifies
\[\opA _k\opA _k^+ g = g, \quad \text{ for all }g \in \Range {A_k},\qquad\text{and}
\qquad\| \opA _k^+ A v \|_{V_k} \lesssim \|  A_k v \|_{V'_k}, \qquad \forall v \in V_k.\]

%defined as 
%\[
%\opA _k \opA _k^+ f = f, \qquad  c_k (\opA _k^+ f, z) = 0, \qquad \forall z \in \ker{A}_k
%\]

%\COMMENT{
%I made the definition of the pseudo inverse explicit so that, now, it is defined on all of %$V'$. Maybe the new definition gives us the Penrose pseudo inverse?	The definition of Penrose pseudo inverse reads 
%	\[
%	A A^+ A = A \qquad A^+ A A^+ = A^+
%	\]
%	
%\
%
%With this definition, we do not need the following anymore:
%
%We assume that these are constructed in such a way that the bilinear forms  $a_k: V_k \times V_k \to \mathbb{R}$ defined as
%\[
%a_k(u,v) = \langle A_k u, v \rangle, \qquad u,v \in V_k,
%\]
%are coercive on $\Range{A_k^+ A_k}$:
%\[
%a_k(A_k^+ A_k v,A_k^+ A_k v) \gtrsim \| A_k^+ A_k v \|_{V}^2, \qquad \text{ for all } v \in V.
%\] 
%}

 We then assemble the linear operator $A^+ : V' \to V$ as 
\[
\opA ^+ = \left[
\begin{array}{ccc}
  \opA ^+_1 & 0 & 0\\
  0& \ddots & 0 \\
  0 & 0 & \opA ^+_N
\end{array}
\right].
\]
Letting the bilinear operator $a: V \times V \to \mathbb{R}$  be defined as 
\[\qquad a(u,v) = \langle Au, v \rangle = \sum_k a_k(u^k,v^k),\]
we can easily see that we have 
\[
a(A^+ Av,A^+ A v) \gtrsim \| A^+ A v \|_{V}^2, \qquad \text{ for all } v \in V.
\]

Equation \eqref{eq:saddle1} trivially implies that $f + \opB ^T \lambda = \opA u 
%\in \mathfrak{R}(\opA )
$. Then,
 letting \[Z = \ker \opA  \subset V,\] we immediately see that for some unknown $z \in Z$, the solution $(u,\lambda)$ of Problem \ref{saddle1} satisfies
\begin{equation}\label{eq:u_def}
u = \opA ^+(f + \opB ^T \lambda) + z.
\end{equation}

Plugging this expression in \eqref{eq:saddle2} we obtain an identity in $\Lambda'$ of the form
\[
\opB \opA ^+\opB ^T \lambda  + \opB  z = -\opB \opA ^+ f.
\]
 If we  test \eqref{eq:saddle1} with any  $z \in Z$, observing that, since, by the assumption that $\ker \opA = \ker \opA ^T$, we have 
\[\langle \opA  u , z \rangle = \langle u , \opA ^T z \rangle = 0,\]
we immediately obtain that 
\begin{equation}\label{kerequation}
\langle \opB ^T \lambda + f , z \rangle = 0, \ \forall z \in Z.
\end{equation}
 Letting $ G : Z \to \Lambda'$ denote the restriction to $Z$ of $\opB $, we can rewrite \eqref{kerequation} as an identity in $Z'$:
\[\opG ^T \lambda + f = 0,\]
where, by abuse of notation, we let $f \in Z'$ denote the restriction of $f$ to $Z$.

\
\begin{comment}
\COMMENT{
The problems jump from strong to weak form, which is not clean
}
\end{comment}
\

In other words we transformed the original saddle point problem to the following reduced saddle point problem, whose unknowns are the multiplier $\lambda$ and the $\ker \opA$ component $z$ of the solution. 
\begin{pb}\label{schur} 
 Given $f \in V'$ find $\lambda \in \Lambda$ and $z \in Z$
such that
\begin{eqnarray*}
  \opS  \lambda + \opG  z  &=& g, \\
  \opG ^T \lambda &=& -f, 
\end{eqnarray*}
where $\opS  = \opB  \opA ^+ \opB ^T$ and $g = -\opB  \opA ^+ f$.
\end{pb}

It will be convenient, in the following, to write Problem \ref{schur} in weak form as: find $\lambda \in \Lambda$ and $z \in Z$
such that
\begin{eqnarray*}
  s(\lambda ,\mu ) + b (z,\mu )  &=& \langle g,\mu  \rangle, \qquad \forall \mu \in \Lambda, \\
  b(w, \lambda ) &=& -\langle f, w \rangle \qquad \forall w \in Z.
\end{eqnarray*}
with $s:\Lambda \times \Lambda \to \mathbb{R}$ defined by
\[s(\lambda,\mu) = \langle \opS  \lambda , \mu \rangle.\]

Note that the presence of a nontrivial kernel $Z$ indicates that a global coupling between the subproblems remains.
The cardinality of the finite dimensional set $Z$ is in this sense a measure of how efficient the decoupling in local subproblems is. We will  assume below that the dimension of $Z$ is relatively small, implying that the decomposition at hand is efficient.

\

%\begin{pb}\label{schur} Given $f \in V'$ find $\lambda \in \Lambda$ and $z \in Z$
%	such that
%	\begin{eqnarray*}
%		s(\lambda ,\mu ) + b (z,\mu )  &=& \langle g,\mu  \rangle, \qquad \forall \mh  \in \Lambda_\delta \\
%		b(w, \lambda ) &=& -\langle f, w \rangle \qquad \forall w \in Z.
%	\end{eqnarray*}
%	with $s:\Lambda \times \Lambda \to \mathbb{R}$ defined by
%	$s(\lambda,\mu) = \langle \opS  \lambda , \mu \rangle$, $\opS  = \opB  \opA ^+ \opB ^T$ and $g = -\opB  \opA ^+ f$.
%\end{pb}

This new saddle point problem is the starting point of our abstract coupling algorithm. We start by verifying the well posedness of such a problem. We have the following Theorem.
\begin{thm}\label{thm:wellposedness_cont}
 The following {\em inf-sup} condition holds
\begin{equation}\label{continfsup}
\inf_{z \in Z} \sup_{\lambda \in \Lambda} \frac {\langle \opB z,\lambda\rangle }{\| z \|_{V} \|\lambda \|_{\Lambda}} = 
\inf_{z \in Z} \sup_{\lambda \in \Lambda} \frac {b(z,\lambda)}{\| z \|_{V} \|\lambda \|_{\Lambda}} \gtrsim 1.
\end{equation}
Moreover the operator $s$ satisfies 
\begin{equation}\label{contelker}
 s(\lambda,\lambda) \gtrsim \| \lambda \|^2_{\Lambda} \quad \text{ for all }\lambda \in\Zperp= \ker \opG^T =  \{ \lambda \in \Lambda:\  b(z, \lambda) = 0\ \forall z \in Z\}
\end{equation}
and 
\begin{equation}\label{Scont}
 s(\lambda,\mu) \lesssim \| \lambda \|_{\Lambda} \| \mu \|_{\Lambda}.
 \end{equation}
\end{thm}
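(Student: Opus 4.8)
The plan is to prove the three claims in turn, using the abstract assumptions: the inf-sup condition \eqref{infsupbk} for $\opB$, the coercivity of $a_k$ on $\carrierAk$, the identification $\ker \opA_k = \ker \opA_k^T$, and the fact that $\opR$ is an isomorphism of $\Vfull$ onto $\hatV$ (so in particular $\hatV \cap Z = \{0\}$, since $\hatV \cong \Vfull$ on which $\Astrong$ is invertible, while $Z = \ker \opA$ corresponds to functions on which the energy vanishes).

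First I would establish \eqref{continfsup}. Since $Z \subset V$, the global inf-sup \eqref{infsupbk} gives, for each fixed $z \in Z$, a $\lambda \in \Lambda$ with $b(z,\lambda) \gtrsim \|z\|_V \|\lambda\|_\Lambda$, \emph{provided} $\opB z \neq 0$ in $\Lambda'$. So the real content is that $\opB$ is injective on $Z$, equivalently $Z \cap \hatV = \{0\}$. This follows because $\hatV = \Range(\opR)$ and $\opR$ is an isomorphism onto $\hatV$: if $z \in Z \cap \hatV$ then $z = \opR \vfull$ for some $\vfull \in \Vfull$, and $0 = a(z,z) = \sum_k a_k(\Rk \vfull, \Rk \vfull) = \langle \Astrong \vfull, \vfull\rangle$; combined with coercivity of each $a_k$ on $\carrierAk$ and $\ker\opA_k = \ker\opA_k^T$ this forces each component $\Rk\vfull \in \ker\opA_k$, hence $\Astrong\vfull$ pairs to zero against everything reachable, and bounded invertibility of $\Astrong$ gives $\vfull = 0$, so $z = 0$. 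Since $Z$ is finite-dimensional, injectivity of $\opB|_Z$ plus a compactness/norm-equivalence argument upgrades the pointwise bound to the uniform inf-sup \eqref{continfsup}.

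Next, \eqref{Scont} is routine: $s(\lambda,\mu) = \langle \opB\opA^+\opB^T\lambda,\mu\rangle$, and each $\opB_k : V_k \to \Lambda'$, $\opB_k^T : \Lambda \to V_k'$, and $\opA_k^+ : V_k' \to V_k$ are bounded, so $\|\opS\lambda\|_{\Lambda'} \lesssim \|\lambda\|_\Lambda$ and Cauchy--Schwarz on the duality pairing finishes it.

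The main work is \eqref{contelker}, the coercivity of $s$ on $\Zperp$. Here I would argue as follows: for $\lambda \in \Zperp$, set $u = \opA^+\opB^T\lambda \in \prod_k \carrierAk$. Then $s(\lambda,\lambda) = \langle \opB^T\lambda, \opA^+\opB^T\lambda\rangle = \langle \opB^T\lambda, u\rangle = a(u,u) \gtrsim \|u\|_V^2$ by the coercivity of $a$ on $\prod_k\carrierAk$. It remains to show $\|u\|_V \gtrsim \|\lambda\|_\Lambda$ for $\lambda \in \Zperp$. To get this, note that $\opA^+\opB^T\lambda = u$ means $a_k(u^k, w) = b_k(w,\lambda)$ for all $w \in \carrierAk$; one cannot directly invert because $\opB^T\lambda$ need not lie in $\Range\opA$. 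The key observation is that $\lambda \in \Zperp = \ker\opG^T$ means precisely $b(z,\lambda) = 0$ for all $z \in Z = \ker\opA$, i.e. $\langle \opB^T\lambda, z\rangle = 0$ for all $z \in \ker\opA = \ker\opA^T$, which is exactly the compatibility condition $\opB^T\lambda \in \Range\opA = (\ker\opA^T)^\perp$. Hence $\opA u = \opB^T\lambda$ \emph{exactly}, and then by the inf-sup \eqref{infsupbk} there is $v \in V$ with $\|v\|_V \lesssim \|\lambda\|_\Lambda^{-1}\cdot$(test normalization) and $b(v,\lambda) \gtrsim \|\lambda\|_\Lambda$; testing $\opA u = \opB^T\lambda$ against $v$ gives $\|\lambda\|_\Lambda^2 \lesssim b(v,\lambda) = \langle \opB^T\lambda, v\rangle = a(u,v) \lesssim \|u\|_V\|v\|_V \lesssim \|u\|_V\|\lambda\|_\Lambda$, whence $\|u\|_V \gtrsim \|\lambda\|_\Lambda$ and \eqref{contelker} follows by combining with the coercivity bound on $a(u,u)$.

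The step I expect to be the main obstacle is making rigorous that $\lambda\in\Zperp$ gives the \emph{exact} compatibility $\opB^T\lambda\in\Range\opA$ and that $\opA^+$ then recovers the full preimage up to $\ker\opA$ in a norm-stable way; this hinges delicately on the hypotheses $\ker\opA_k=\ker\opA_k^T$ and the pseudo-inverse estimate $\|\opA_k^+\opA_k v\|_{V_k}\lesssim\|\opA_k v\|_{V_k'}$, and on interpreting $\Range\opA$ as a closed subspace (which follows from coercivity on $\carrierAk$). Once that identification is in place, the rest is a standard saddle-point / inf-sup manipulation.
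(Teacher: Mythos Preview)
Your proposal is correct and follows essentially the same route as the paper. For \eqref{continfsup} you use injectivity of $\opB|_Z$ together with finite-dimensionality of $Z$, where the paper phrases the same idea as a contradiction via a weakly convergent sequence; for \eqref{contelker} both you and the paper identify the crucial step as $\opB^T\lambda\in\Range\opA$ for $\lambda\in\Zperp$ (you via the annihilator $(\ker\opA^T)^\perp=\overline{\Range\opA}$ and closed range, the paper via an explicit construction of $w$ with $\opA w=\opB^T\lambda$), after which coercivity of $a$ on $\prod_k\carrierAk$ combined with the inf-sup \eqref{infsupbk} gives the bound; and \eqref{Scont} is routine boundedness in both accounts.
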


\begin{proof}
We prove (\ref{continfsup}) by contradiction: more precisely we assume that for each $\epsilon > 0$ there exist $z_\epsilon  \in Z$
with $\| z_\epsilon \|_{V} = 1$ such that for all $\lambda$ with $\| \lambda \|_{\Lambda}=1$ it holds that $b(z_\epsilon,\lambda) < \epsilon$.
 We can then extract a weakly convergent sequence $z_n$. Let $z$ denote the weak limit of the sequence: $z_n \rightharpoonup z$.
By definition of weak convergence we immediately see that $z \in \ker \opB $. Now we recall that $\ker \opB  \cap Z = \ker \opB  \cap \ker \opA  = \{ 0 \}$. As $Z$ is finite dimensional, this is in contradiction with $\| z_n \|_V = 1$.

\

We next prove the coercivity of $s$ on $\widehat \Lambda$.
We start by observing that
$\opB ^T \Zperp\subseteq \Range A$.  Indeed, as, by assumption, $a$ is coercive in $\Range {A^+}$, there exist $w \in \Range {A^+}$ such that for all $v \in \Range{A^+}$ it holds that
\[
a(w,v) = \langle B^T \lambda , v \rangle, \qquad \| w \|_{V} \lesssim \| B^T \lambda \|_{V'} \lesssim \| \lambda \|_{\Lambda}.
\]
We claim that $A w = B^T \lambda$ in $V'$. Indeed, for all $v \in V$ we have $v = A^+ A v + z$ with $z\in Z$, so that we can write
\begin{multline}
	\langle \opA w,v  \rangle  = a(w,A^+ A v + z) = a(w, A^+ A v) = \langle B^T \lambda , A^+ A v \rangle + \langle G^T \lambda , z \rangle = \langle B^T \lambda, v \rangle.
\end{multline}

Now, given $\lambda \in \Zperp$ we have
\[
s(\lambda,\lambda) = \langle \opB  \opA ^+ \opB ^T \lambda, \lambda \rangle = \langle \opA ^+ \opB ^T \lambda , \opB ^T \lambda \rangle.
\]
On the other hand, as $B^T \lambda \in \Range A$, we have
\[
\| \opB ^T \lambda \|_{V'} = \| \opA  \opA ^+ \opB ^T \lambda \|_{V'} \lesssim \| \opA ^+ \opB ^T \lambda \|_{V}.
\]
Then we can write
\begin{gather*}
	s( \lambda, \lambda ) =  a( \opA ^+ \opB ^T \lambda ,
	\opA ^+ \opB ^T \lambda) \gtrsim \| \opA ^+ \opB ^T \lambda \|_{V}^2
	\gtrsim \| \opB ^T \lambda \|_{V'}^2.
\end{gather*}

We have, thanks to the inf-sup condition \eqref{infsupbk}
\[
\| \opB ^T \lambda \|_{V'} = \sup_{v \in V} \frac{ b (v ,
	\lambda) }{\| v \|_{V}} \gtrsim \| \lambda \|_{\Lambda},
\]
which allows us to conclude that
\[
s(\lambda , \lambda) \gtrsim \|
\lambda \|^2_{\Lambda}.
\]

The continuity \eqref{Scont} follows, since
\[
s(\lambda , \mu) = \langle   \opA ^+ \opB ^T \lambda, \opB^T \mu \rangle \lesssim \| {\opA ^+} \opB ^T \lambda\|_V 
\|  \opB ^T \mu\|_{{V'}}\lesssim  \|
\lambda \|_{\Lambda}\|
\mu \|_{\Lambda}.
\]
\end{proof}

Classical results on saddle point problems \cite{boffi2013mixed}, together with the well posedness of Problem \ref{abstractStrong},  imply the following corollary.
\begin{cor}
Problem \ref{schur} admits a unique solution $(\lambda,z)$ such that, letting $\ufull$ denote the solution of (\ref{abstractStrong}), we have \[\opR \ufull = \opA ^+ (\opB ^T \lambda + f)+ z.\]
\end{cor}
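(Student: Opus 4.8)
The plan is to recognise Problem~\ref{schur}, in the weak form displayed just above, as a classical Brezzi-type saddle point problem in which $\Lambda$ plays the role of the ``primal'' space, $Z$ the role of the ``multiplier'' space, $s$ is the principal bilinear form on $\Lambda\times\Lambda$, and $b$, restricted to $Z\times\Lambda$, is the constraint form. First I would match the hypotheses one by one against Theorem~\ref{thm:wellposedness_cont}: continuity of $s$ is \eqref{Scont}, the constraint inf-sup condition is \eqref{continfsup}, and coercivity of $s$ on the kernel $\Zperp=\{\lambda\in\Lambda:\ b(z,\lambda)=0\ \forall z\in Z\}$ of the constraint operator is \eqref{contelker} (continuity of $b$ being understood throughout, and in any case implicit in \eqref{infsupbk}). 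The standard theory of saddle point problems \cite{boffi2013mixed} then delivers a unique solution $(\lambda,z)\in\Lambda\times Z$ of Problem~\ref{schur}, together with an a priori bound in terms of $\|g\|_{\Lambda'}+\|f\|_{Z'}$.

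It then remains to identify this solution with the solution $\ufull$ of \eqref{abstractStrong}. Here I would invoke that \eqref{abstractStrong}, being governed by the boundedly invertible operator $\Astrong$, together with the inf-sup condition \eqref{infsupbk}, makes Problem~\ref{saddle1} well posed, its solution being the pair $(\opR\ufull,\lambda)$ for a uniquely determined multiplier $\lambda\in\Lambda$. The computations carried out right before the statement show that this same pair feeds a solution of Problem~\ref{schur}: from \eqref{eq:saddle1} one obtains \eqref{eq:u_def}, i.e.\ $u=\opA^+(\opB^T\lambda+f)+z$ with $z:=u-\opA^+(\opB^T\lambda+f)$, and $z\in Z=\ker\opA$ because $\opA u=f+\opB^T\lambda\in\Range{\opA}$ and $\opA\opA^+$ acts as the identity on $\Range{\opA}$; substituting into the constraint \eqref{eq:saddle2} yields the first equation $s(\lambda,\mu)+b(z,\mu)=\langle g,\mu\rangle$, while testing \eqref{eq:saddle1} against $Z$ and using $\ker\opA=\ker\opA^T$, as in \eqref{kerequation}, yields the second, $b(w,\lambda)=-\langle f,w\rangle$. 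By the uniqueness obtained in the first step, $(\lambda,z)$ must be the solution of Problem~\ref{schur}, and therefore $\opR\ufull=u=\opA^+(\opB^T\lambda+f)+z$, which is exactly the claim.

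The only point that is not entirely mechanical is the appeal to well posedness of Problem~\ref{saddle1}: since $\opA$ is not assumed invertible, that problem looks a priori underdetermined, and one has to use that the constraint $\opB u=0$ confines $u$ to $\hatV=\opR(\Vfull)$, on which $\Astrong$ acts (through the isomorphism $\opR$) invertibly, while \eqref{infsupbk} pins down $\lambda$; this is what guarantees that the pair extracted by elimination of $u$ is \emph{the} solution of Problem~\ref{schur} rather than merely \emph{a} solution of a weaker problem. Beyond that, everything reduces to the standard Brezzi theory, with all constants depending only on those in Theorem~\ref{thm:wellposedness_cont} and in \eqref{infsupbk}.
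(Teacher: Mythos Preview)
Your proposal is correct and follows exactly the approach the paper intends: invoke Brezzi's theory via Theorem~\ref{thm:wellposedness_cont} for existence and uniqueness of $(\lambda,z)$, then use the well posedness of Problem~\ref{saddle1} and the derivation \eqref{eq:u_def}--\eqref{kerequation} to identify the solution with $\opR\ufull$. One small imprecision: continuity of $b$ is not ``implicit in \eqref{infsupbk}'' (an inf-sup condition is a lower, not an upper, bound); rather, it follows from the assumed boundedness of the operators $\opB_k:V_k\to\Lambda'$.
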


\section{Discretization}
We discretize Problem \ref{schur} by a %non conforming 
Galerkin discretization, involving both approximating the multiplier space $\Lambda$ with a finite dimensional space $\Lambda_\delta$, and numerically evaluating the pseudo inverses $A_k^+$.  More precisely, for all $k$ we consider approximate linear solvers $\opA ^+_{h,k}: V' \to \carrierAk$, that we will treat as a black-boxes. We then let
\[
\opA ^+_h = \left[
\begin{array}{ccc}
\opA ^+_{h,1} & 0 & 0\\
0& \ddots & 0 \\
0 & 0 & \opA ^+_{h,N}
\end{array}
\right],
\]
and we define an approximate bilinear form $s_h:\Lambda \times \Lambda \to \mathbb{R}$ and an approximate right hand side $g_h$ as
\[
s_h(\lambda,\mu) = \langle \opB  \opA ^+_h \opB ^T \lambda, \mu \rangle, \qquad g_h = -BA^+_h f.
\]

Let $\Lambda_\delta  \subseteq \Lambda$ be a finite dimensional subspace depending on a
 parameter $\delta > 0$, playing the role of a meshsize. Since the space $Z = \ker A$ is, by assumption, finite dimensional,
we do not need to discretize it. We then consider the following discrete problem.
\begin{pb}\label{feti_discrete} Find $\lh  \in \Lambda_\delta$ and $z^* \in Z$
such that
\begin{eqnarray*}
  s_h(\lh ,\mh ) + b (z^*,\mh )  &=& \langle g_h,\mh  \rangle, \qquad \forall \mh  \in \Lambda_\delta, \\
  b(w, \lh ) &=& -\langle f, w \rangle, \qquad \forall w \in Z.
\end{eqnarray*}
\end{pb}
\

%\COMMENT{
%The above equation is in variational form. Maybe it is better to write in variational form %also the corresponding continuos equation.
%}

As we want to treat the local solvers as black boxes, we want the assumptions on the approximate pseudo inverses to be as weak as possible.  
%In particular, we do not assume $A_{h,k}^+$ to be linear (this will allow to include the use of adaptive methods in this framework). 
We want to understand under which minimal conditions Problem \ref{feti_discrete}  is an approximation to Problem \ref{schur}. To this aim it is only natural to require that the  black box approximate linear solvers $\opA ^+_{h,k}$ are  approximations of the continuous pseudo inverses $\opA^+_k$, at least when the data have some extra smoothness. 

\

To quantify, in our abstract setting, the concept of ``extra smoothness'', we  introduce infinite dimensional subspaces of $V_k$ and $\Lambda$, playing the role of higher regularity spaces. More precisely we  let $\Vsp_k \subseteq V_k$ and $\Ls \subseteq \Lambda$, $\smo \geq 0$, denote a scale of subspaces of $V_k$ and $\Lambda$ respectively, such that $W^0_k = V_k$ and $\Xi^0 = \Lambda$, and such that, for $t< \smo$, $\Ls \subset \Lt$ and $\Vsp_k \subset W_k^t$. 
We assume that,  in this smoothness scale, $A^+ B^T$ is smoothness preserving up to a certain range $R$,  that is, that $\lambda \in \Ls$ implies that $A^+ B^T \lambda \in \Vsp = \prod_k \Vsp_k$, and 
\begin{equation}\label{eq:reg_stab}
\| A^+ \opB^T \lambda \|_{\Vsp} \leq C_r \| \lambda \|_{\Ls}, \qquad 0 \leq \smo \leq R.
\end{equation}

We make one of the two following assumptions on the behavior  of the black-box solver $A_h^+$ as the ``meshsize'' parameter $h$ tends to zero.
\begin{assmp}\label{consist_strong}
The discrete solver $\opA ^+_h$ satisfies the following estimate: $A^+ f \in \Vsp$, $0 \leq r \leq R$,
  implies 
\[
\| (\opA ^+_h - \opA ^+) f \|_{V} \lesssim h^{\smo} \| A^+ f \|_{\Vsp}.
\]
\end{assmp}

\begin{assmp}\label{consist_weak}
The discrete solver $\opA ^+_h$ satisfies the following estimate: $A^+ f \in \Vsp$, $0 \leq r \leq R$, implies
\[
\| \opB (\opA ^+_h - \opA ^+) f \|_{\Lambda'} \lesssim h^{\smo} \| A^+ f \|_{\Vsp}.
\]
\end{assmp}
Assumption \ref{consist_strong} ensures the individual convergence of approximations for all the subproblems, whereas the weaker Assumption \ref{consist_weak} only ascertains that the local solvers are sufficiently accurate to approximate the coupling. If only the latter holds, some other local solvers must be eventually used to reconstruct the full solution of the subproblems from the computed $\lambda_\delta$ and $z^*$. Note also that Assumption \ref{consist_strong} with $\smo=0$ reduces to the boundedness of $\opA ^+_h$.
\

\

We next  make the quite natural assumption that $\Lambda_\delta$ is an order $m$ approximation space.
\begin{assmp}\label{approx-lambda} For all $\lambda \in \Ls$, $\smo >  0$,
\[\inf_{\mh  \in \Lambda_\delta} \| \lambda - \mh  \|_{\Lambda} \lesssim \delta^{\min\{\smo,m\}} \| \lambda \|_{\Ls}.\]
\end{assmp}
The analysis below also requires the following inverse inequality to hold true in $\Lambda_\delta$, where for the sake of simplicity and without loss of generality we assume that $t \leq R$ ($R$ being the parameter in \eqref{eq:reg_stab}).
\begin{assmp}\label{ass:inverse}
	For some $t > 0$, $\Lambda_\delta \subset \Xi^t$, and for all $\lh  \in \Lambda_\delta$, 
\begin{equation}\label{inverse}
\| \lh  \|_{\Ls} \lesssim \delta^{-r} \|\lh  \|_{\Lambda}, \quad 0 < r \leq t.
\end{equation}
\end{assmp}

We have the following lemma.
\begin{lem}\label{coerc_sh} 
Under Assumptions \ref{consist_weak} and \ref{ass:inverse}, the bilinear form $s_h$ is continuous on $\Lambda \times \Lh$, and there exists a constant $C_\Lambda$ such that if $\hO/\hS < C_\Lambda$ then $s_h$ is  coercive on $\hLh = \Lh\cap \Zperp$.
\end{lem}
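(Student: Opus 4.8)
The plan is to compare the discrete form $s_h$ with the continuous form $s$, using the consistency estimate of Assumption \ref{consist_weak} together with the inverse inequality of Assumption \ref{ass:inverse} to control the perturbation, and then invoke the coercivity \eqref{contelker} of $s$ on $\Zperp$ from Theorem \ref{thm:wellposedness_cont}. Write, for $\lambda, \mu \in \Lambda$,
\[
s_h(\lambda,\mu) - s(\lambda,\mu) = \langle \opB(\opA^+_h - \opA^+)\opB^T \lambda, \mu \rangle.
\]
For continuity on $\Lambda \times \Lh$, I would apply Assumption \ref{consist_weak} with the datum $f = \opB^T\lambda$ (for which $\opA^+ f = \opA^+\opB^T\lambda \in \Vsp$ is guaranteed by the regularity-stability bound \eqref{eq:reg_stab}, say at some fixed level $r \le r_0$, or directly at $r=0$ where the estimate reduces to boundedness of $\opA^+_h$): this gives $\|\opB(\opA^+_h-\opA^+)\opB^T\lambda\|_{\Lambda'} \lesssim \|\lambda\|_\Lambda$, and pairing against $\mu \in \Lh \subset \Lambda$ yields $|s_h(\lambda,\mu)-s(\lambda,\mu)| \lesssim \|\lambda\|_\Lambda\|\mu\|_\Lambda$; combined with \eqref{Scont} this gives continuity of $s_h$.

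For coercivity on $\hLh = \Lh \cap \Zperp$, take $\lambda_\delta \in \hLh$ and estimate
\[
s_h(\lambda_\delta,\lambda_\delta) = s(\lambda_\delta,\lambda_\delta) + \langle \opB(\opA^+_h-\opA^+)\opB^T\lambda_\delta,\lambda_\delta\rangle
\ge s(\lambda_\delta,\lambda_\delta) - \|\opB(\opA^+_h-\opA^+)\opB^T\lambda_\delta\|_{\Lambda'}\|\lambda_\delta\|_\Lambda.
\]
The first term is $\gtrsim \|\lambda_\delta\|_\Lambda^2$ by \eqref{contelker}, since $\lambda_\delta \in \Zperp$. For the perturbation term, the key point is that I cannot afford to lose a factor $\delta^{-r}$ from the inverse inequality unless it is compensated. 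So I would apply Assumption \ref{consist_weak} at smoothness level $r$ (for $0 < r \le t$, with $r$ also within the range $r_0$ where \eqref{eq:reg_stab} holds — I would take $r = \min\{t, r_0\}$, or simply note such an $r>0$ exists), getting
\[
\|\opB(\opA^+_h-\opA^+)\opB^T\lambda_\delta\|_{\Lambda'} \lesssim h^r \|\opA^+\opB^T\lambda_\delta\|_{\Vsp} \lesssim h^r \|\lambda_\delta\|_{\Ls} \lesssim h^r \delta^{-r}\|\lambda_\delta\|_\Lambda = (h/\delta)^r \|\lambda_\delta\|_\Lambda,
\]
where the middle inequality is \eqref{eq:reg_stab} and the last is the inverse inequality \eqref{inverse}, valid because $\lambda_\delta \in \Lh \subset \Xi^t$. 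Hence
\[
s_h(\lambda_\delta,\lambda_\delta) \ge \big(c_0 - C_1 (h/\delta)^r\big)\|\lambda_\delta\|_\Lambda^2,
\]
and choosing $C_\Lambda := (c_0/(2C_1))^{1/r}$ gives $s_h(\lambda_\delta,\lambda_\delta) \ge \tfrac{c_0}{2}\|\lambda_\delta\|_\Lambda^2$ whenever $h/\delta < C_\Lambda$.

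The main obstacle is bookkeeping the smoothness indices: one must check that there is a single exponent $r>0$ lying simultaneously in the range $(0,t]$ where the inverse inequality \eqref{inverse} applies, in the range $[0,r_0]$ where the regularity-stability estimate \eqref{eq:reg_stab} for $\opA^+\opB^T$ holds, and in the range $[0,R]$ where Assumption \ref{consist_weak} is posed; since all these ranges include a neighbourhood of $0$ from above, such an $r$ exists, and the argument goes through. A secondary subtlety is ensuring $\opB^T\lambda_\delta$ is an admissible datum for Assumption \ref{consist_weak}, which follows precisely because \eqref{eq:reg_stab} certifies $\opA^+\opB^T\lambda_\delta \in \Vsp$.
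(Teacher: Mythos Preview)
Your proposal is correct and follows essentially the same approach as the paper: write $s_h = s + \langle \opB(\opA^+_h-\opA^+)\opB^T\cdot,\cdot\rangle$, use Assumption \ref{consist_weak} combined with \eqref{eq:reg_stab} to bound the perturbation by $h^r\|\lambda\|_{\Ls}\|\mu\|_\Lambda$, take $r=0$ for continuity, and for coercivity apply the inverse inequality \eqref{inverse} to absorb $\|\lambda_\delta\|_{\Ls}$ at the cost of $\delta^{-r}$, then conclude via \eqref{contelker}. Your remark on the bookkeeping of the smoothness index $r$ (it must lie in the intersection of the ranges for \eqref{inverse}, \eqref{eq:reg_stab}, and Assumption \ref{consist_weak}) makes explicit what the paper handles with the phrase ``for $r>0$ sufficiently small''.
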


\begin{proof}
We start by observing  that {for $r$ with $0< r  < \min\{t,R\}$,  by \eqref{eq:reg_stab} and Assumptions \ref{consist_weak},}  for $\lambda \in \Lambda$ and $\mh \in \Lh$, we can write
	\begin{multline}\label{eq:cont_s_1}
		\langle \opB (\opA ^+ - \opA ^+_h) \opB ^T \lambda , \mh  \rangle \lesssim \| \opB  (\opA ^+ - \opA ^+_h) \opB ^T \lambda  \|_{\Lambda'}
		\| \mh  \|_{\Lambda}\\
		\lesssim  h^\smo \| \opA^+ \opB ^T \lambda  \|_{\Vsp} \| \mh  \|_{\Lambda} \lesssim h^{\smo} \| \lambda  \|_{\Ls} \| \mh  \|_{\Lambda}.
	\end{multline}
	By taking $\smo=0$ in \eqref{eq:cont_s_1} it follows that
	\[
	s_h(\lambda, \mh) = s(\lambda, \mh) + \langle \opB (\opA ^+_h - \opA ^+) \opB ^T \lambda , \mh  \rangle \lesssim \| \lambda  \|_{\Lambda} \| \mh  \|_{\Lambda},
	\]
	which proves the continuity.
	
	Now, note that if $\lambda = \lh \in \Lh$ in \eqref{eq:cont_s_1}, then by Assumption \ref{ass:inverse}
	we have, for some $r > 0$,
	\[
	 h^{\smo} \| \lh  \|_{\Ls} \| \mh  \|_{\Lambda} \lesssim h^{\smo} \delta^{-\smo} \| \lh  \|_{\Lambda} \| \mh  \|_{\Lambda},
	\]
that is, for some constant $C > 0$ we have
\[\langle \opB (\opA ^+ - \opA ^+_h) \opB ^T \lh , \lh  \rangle \leq C h^{\smo} \delta^{-\smo} \| \lh  \|_{\Lambda}^2.\]

Let now $\lh \in \hLh = \Lh \cap \Zperp$. As $\lh \in \Zperp$, we know, by Theorem \ref{thm:wellposedness_cont}, that $s(\lh,\lh) \geq \Cs \| \lh \|_{\Lambda}^2$ for some $\Cs>0$. We can  write
\begin{gather*}
s_h(\lh ,\lh ) = \langle \opB  \opA ^+_h \opB ^T \lh , \lh  \rangle  =
\langle \opB 
 \opA ^+ \opB ^T \lh  , \lh  \rangle - \langle\opB  (\opA ^+ - \opA ^+_h) \opB ^T \lh , \lh  \rangle = \\
s(\lh ,\lh ) - \langle \opB  (\opA ^+ - \opA ^+_h) \opB ^T \lh ,\lh  \rangle
\ge \Cs \| \lh  \|^2_{\Lambda} - \langle \opB (\opA ^+ - \opA ^+_h) \opB ^T \lh , \lh  \rangle.
\end{gather*}
We conclude that
\[ s_h(\lh ,\lh ) \geq \Cs \| \lh  \|^2_{\Lambda} - C h^{\smo} \delta^{-\smo} \| \lh  \|_{\Lambda}^2,\]
which, provided $h$ and $\delta$ are chosen in such a way that $h^{\smo} \delta^{-\smo} < \Cs/C$, implies that $s_h$ is coercive. 
\end{proof}

\begin{rem}
The condition appearing in Lemma \ref{coerc_sh} for the coercivity of the Schur complement is similar to that introduced in \cite{Bab72} for the discrete stability of the full mixed Lagrange multiplier problem. We also refer to \cite{Bre74} for a discussion on the well-posedness and approximation of Lagrange multiplier formulations.
\end{rem}

Finally, for the well posedness of the discrete problem we will need  an inf-sup condition allowing to control $Z$ with elements of $\Lambda_\delta$. As $Z$ is a small fixed space (independent of $\delta$) the following mild assumption will be sufficient to this aim. 
\begin{assmp}\label{infsup}
	There is a (small) finite dimensional subspace  $\Lambda_0 \subset \Lambda$ such that $\Lambda_0 \subseteq \cap_\delta \Lambda_\delta$  and such that 
	\[
	\inf_{z\in Z} \sup_{\mu\in \Lambda_0} \frac{b(z,\mu)}{\| z \|_{V}\| \mu\|_{\lambda}} \gtrsim 1.
	\]
\end{assmp}

Assumption \ref{infsup} combined with Lemma \ref{coerc_sh} ensure the well posedness of Problem \ref{approx-lambda}, which yields an optimal approximation to the solution of the original problem, as stated by the following theorem.

\begin{thm}\label{thm:error}  Under Assumptions \ref{consist_weak}, \ref{ass:inverse} and  \ref{infsup},  $u - z \in \Vsp$, $r \leq R$  implies
\[
\| \lambda - \lh  \|_{\Lambda} + \| z - z^* \|_{V} \lesssim \inf_{\substack{\xid \in \Lambda_\delta} \atop
	{\opB^T (\lambda - \xid) \perp Z} 
	} \| \lambda - \xid \|_{\Lambda} + h^{\smo} \| u - z \|_{\Vsp}.
\]
Moreover, if \ref{consist_strong} holds, setting $u_h = A^+_h(f + B^T \lh) + z^*$ we have that 
\[
\| u - u_h \|_{V}  \lesssim \inf_{{\xid \in \Lambda_\delta} \atop
	{\opB^T (\lambda - \xid) \perp Z} 
} \| \lambda - \xid \|_{\Lambda} + h^{\smo} \| u - z \|_{\Vsp}.
\]
\end{thm}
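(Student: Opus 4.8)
The plan is to prove Theorem \ref{thm:error} by a standard saddle-point (Brezzi) argument applied to the \emph{perturbed} discrete problem, treating the discrepancy $s_h - s$ as a consistency error. First I would record the abstract ingredients assembled in the preceding lemmas: Lemma \ref{coerc_sh} gives continuity of $s_h$ on $\Lambda\times\Lh$ and, under $h/\delta < C_\Lambda$, coercivity of $s_h$ on $\hLh=\Lh\cap\Zperp$; Assumption \ref{infsup} gives a uniform inf-sup condition for $b(\cdot,\cdot)$ between $Z$ and $\Lambda_0\subseteq\Lambda_\delta$, which together with $\|z\|_V$-control from Theorem \ref{thm:wellposedness_cont} plays the role of the second Brezzi condition. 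These two facts yield well-posedness of Problem \ref{feti_discrete} and a quasi-optimality estimate of the form
\[
\| \lambda - \lh \|_\Lambda + \| z - z^* \|_V \lesssim \inf_{\xid\in\Lambda_\delta^\perp} \| \lambda - \xid \|_\Lambda + \inf_{w\in Z}\|z-w\|_V + E_{\mathrm{cons}},
\]
where $\Lambda_\delta^\perp$ is the constrained set $\{\xid\in\Lambda_\delta : \opB^T(\lambda-\xid)\perp Z\}$, the $Z$-term vanishes since $Z\subset V$ is finite dimensional and not discretized, and $E_{\mathrm{cons}}$ is the consistency error.

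Next I would quantify the consistency error. Subtracting the continuous equations of Problem \ref{schur} (restricted to test functions in $\Lambda_\delta$ and $Z$) from those of Problem \ref{feti_discrete}, the only mismatch comes from $s_h - s$ applied to the exact solution and from $g_h - g$: both are governed by $\opB(\opA^+_h - \opA^+)(\,\cdot\,)$. Using Assumption \ref{consist_weak} with the identity $\opA^+(\opB^T\lambda + f) = \opR\ufull - z = u - z$ (from the Corollary), we get
\[
\langle \opB(\opA^+_h - \opA^+)(\opB^T\lambda + f),\mh\rangle \lesssim h^{\smo}\,\| \opA^+(\opB^T\lambda+f)\|_{\Vsp}\,\|\mh\|_\Lambda = h^{\smo}\,\| u - z\|_{\Vsp}\,\|\mh\|_\Lambda,
\]
so $E_{\mathrm{cons}}\lesssim h^{\smo}\|u-z\|_{\Vsp}$; here it is crucial that the regularity is measured on $u-z$, exactly the quantity that equals $\opA^+(\opB^T\lambda+f)$, so that no separate regularity hypothesis on the pseudoinverse beyond \eqref{eq:reg_stab}/Assumption \ref{consist_weak} is needed. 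Combining this with the quasi-optimality estimate gives the first displayed bound of the theorem. For the second bound, I would write $u - u_h = (\opA^+ - \opA^+_h)(f+\opB^T\lambda) + \opA^+_h\,\opB^T(\lambda-\lh) + (z - z^*)$; the first term is $\lesssim h^{\smo}\|u-z\|_{\Vsp}$ by Assumption \ref{consist_strong}, the middle term is $\lesssim \|\lambda-\lh\|_\Lambda$ by boundedness of $\opA^+_h$ (Assumption \ref{consist_strong} with $\smo=0$, as remarked) and of $\opB^T$, and the last term is already controlled; inserting the first estimate of the theorem closes the argument.

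The main obstacle I anticipate is the \emph{constrained-approximation} bookkeeping: the discrete problem's second equation forces $b(w,\lh) = -\langle f,w\rangle = b(w,\lambda)$ for all $w\in Z$, i.e. $\opB^T(\lambda-\lh)\perp Z$, so the natural comparison element $\xid$ must be chosen in the affine-constrained set $\{\xid\in\Lambda_\delta : \opB^T(\lambda-\xid)\perp Z\}$ rather than all of $\Lambda_\delta$; one must verify this set is nonempty and that the Brezzi machinery can be run on it. This is where Assumption \ref{infsup} does the real work — it guarantees that $\Lambda_\delta$ is rich enough (through $\Lambda_0$) to satisfy the $Z$-inf-sup uniformly, hence one can correct any $\xid\in\Lambda_\delta$ by an element of $\Lambda_0$ to land in the constrained set at the cost of a term bounded by the same infimum. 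A secondary technical point is that coercivity of $s_h$ holds only on $\hLh=\Lh\cap\Zperp$, not on all of $\Lh$, so the error analysis must be organized so that the relevant difference $\lh - \xid$ (with $\xid$ in the constrained set) lies in $\Zperp$ modulo a $Z$-correction; this is routine once the constrained set is set up, but needs to be stated carefully. Everything else — continuity bounds, triangle inequalities, absorbing the $h/\delta$ smallness from Lemma \ref{coerc_sh} — is standard.
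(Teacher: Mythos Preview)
Your proposal is correct and follows essentially the same approach as the paper: the paper carries out explicitly the Strang-type argument you describe, choosing $\xid$ in the constrained set so that $\lh-\xid\in\hLh$, invoking the coercivity of $s_h$ on $\hLh$ from Lemma \ref{coerc_sh}, and identifying the consistency term as $b\bigl((\opA^+-\opA^+_h)(\opB^T\lambda+f),\mh\bigr)$, bounded via Assumption \ref{consist_weak} with $\opA^+(\opB^T\lambda+f)=u-z$; the $z$-error is then controlled through Assumption \ref{infsup} exactly as you outline, and the $u-u_h$ splitting is identical to yours. The only stylistic difference is that the paper writes out the coercivity-plus-consistency computation by hand rather than invoking the abstract Brezzi--Strang machinery.
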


\begin{proof}

Let $\xid \in \Lambda_\delta$ be any element satisfying 
\begin{equation}\label{condxh}
b(z, \lambda  - \xid) = 0.
\end{equation}

We have
\begin{gather*}
 \| \lh  - \xid \|_{\Lambda}^2  \lesssim s_h(\lh  - \xid,\lh  - \xid) \lesssim
s_h( \lambda - \xid, \lh  - \xid)\\
+ [( s( \lambda, \lh  - \xid)   - \langle g, \lh  - \xid \rangle )-
( s_h( \lambda, \lh  - \xid) - \langle g_h ,\lh  - \xid \rangle )]
\end{gather*}
which easily yields
\[
 \| \lh  - \xid \|_{\Lambda}  \lesssim
 \| \lambda - \xid \|_{\Lambda} +
 \sup_{\mh  \in \Lambda_\delta} \frac {| [s( \lambda, \mh )  - \langle g, \mh  \rangle]            - [s_h( \lambda, \mh  ) -  \langle g_h ,\mh   \rangle] |}{\| \mh  \|_{\Lambda}}.
\]
We now need to estimate the consistency errors. We observe that
\[
 s( \lambda, \mh )  - \langle g, \mh  \rangle = b(\opA ^+(\opB ^T \lambda + f),\mh )
\]
and
\[
s_h(\lambda,\mh ) - \langle g_h ,\mh   \rangle = b(\opA _h^+(\opB ^T \lambda + f),\mh ).
\]
Then
\begin{gather*}
| [s( \lambda, \mh )  - \langle g, \mh  \rangle]            - [s_h( \lambda, \mh  ) -  \langle g_h ,\mh  \rangle] | =
| b ( (\opA ^+ - \opA ^+_h)(\opB ^T \lambda + f) , \mh  ) |  \\ \lesssim \| \opB (\opA ^+ - \opA ^+_h)(\opB ^T \lambda + f) \|_{\Lambda'} \| \mh  \|_{\Lambda}.
\end{gather*}
Assumption \ref{consist_weak} yields
\[
| [s( \lambda, \mh )  - \langle g, \mh   \rangle] - [s_h( \lambda, \mh  ) -  \langle g_h ,\mh   \rangle] |  \lesssim h^{\smo} \|\opA^ + (\opB ^T \lambda + f) \|_{\Vsp}.
\]

Let us now bound $\| z - z^* \|_V$. We have 
\[
\| z - z^* \|_{V} \lesssim \sup_{\mu \in \Lambda_0} \frac{b(z - z^*,\mu)}{\| \mu \|_{\Lambda}}.
\]
Now, as $\Lambda_0 \subset \Lambda_\delta$ we have
\begin{multline*}
b(z - z^*,\mu) = \langle g - g_h , \mu \rangle - s(\lambda,\mu) + s_h(\lh,\mu) \\ = 
\langle g - g_h , \mu \rangle - s(\lambda,\mu) + s_h(\lambda,\mu) + s_h(\lh - \lambda,\mu)
\\
\lesssim h^{\smo} \| A^+(B^T \lambda + f) \|_{\Vsp} \| \mu \|_{\Lambda} + \| \lh - \lambda \|_{\Lambda} \| \mu \|_{\Lambda}.
\end{multline*}

Finally, let us estimate the error on $u$.
We have
\begin{multline*}
 u - u_h = A^+(f + B^T\lambda) - A^+_h(f + B^T \lh) + z - z^* \\ = 
 (A^+ - A^+_h)(f + B^T\lambda)  + A^+_h B^T(\lambda - \lh) + z - z^*. 
 \end{multline*}
 Therefore
 \begin{multline*}
 \|u - u_h\|_V = \| (A^+ - A^+_h)(f + B^T\lambda)  + A^+_h B^T(\lambda - \lh) + z - z^*\|_V\\
 \leq \underbrace{\| (A^+ - A^+_h)(f + B^T\lambda)\|_V}_{I} +  \underbrace{\| A^+_h B^T(\lambda - \lh)\|_V}_{II} + \|z - z^*\|_V\\
 \lesssim h^{\smo} \| u - z \|_{\Vsp} + \| \lambda - \lh \|_{\Lambda} + \| z - z^* \|_{V},
\end{multline*}
where we used Assumption \ref{consist_strong} and \eqref{eq:u_def} for $I$ followed by the boundedness of $A^+_h$ and the bound $\|B^T(\lambda - \lh)\|_{V'} \lesssim \| \lambda - \lh \|_{\Lambda}$ for $II$.
\end{proof}

It remains to show that $\lambda$ can be optimally approximated by functions $\xid$ in $\Lambda_\delta$  with $\opB^T (\lambda - \xid) \perp Z$. This is the object of the following proposition.
\begin{prop}
	Under Assumption \ref{infsup} it holds that
	\begin{equation}\label{xidoptimal}
 \inf_{{\xid \in \Lambda_\delta} \atop
	{\opB^T (\lambda - \xid) \perp Z} 
} \| \lambda - \xid \|_\Lambda \lesssim  \inf_{{\xid \in \Lambda_\delta} 
}  \| \lambda - \xid \| _\Lambda
	\end{equation}

	\end{prop}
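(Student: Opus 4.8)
The plan is to take a near-best \emph{unconstrained} approximant of $\lambda$ in $\Lambda_\delta$ and to correct it by an element of the fixed space $\Lambda_0$ furnished by Assumption \ref{infsup}, so as to enforce the orthogonality constraint $\opB^T(\lambda-\xid)\perp Z$ while keeping the correction controlled by the approximation error itself.

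First I would pick $\eta_\delta\in\Lambda_\delta$ realizing the unconstrained best approximation (the infimum is attained, $\Lambda_\delta$ being finite dimensional), so that $\|\lambda-\eta_\delta\|_\Lambda=\inf_{\xid\in\Lambda_\delta}\|\lambda-\xid\|_\Lambda$. The defect of the constraint for $\eta_\delta$ is the functional $\ell\in Z'$ given by $\ell(z)=b(z,\lambda-\eta_\delta)$; since $\opB:V\to\Lambda'$ is bounded, $\|\ell\|_{Z'}\lesssim\|\lambda-\eta_\delta\|_\Lambda$. Next, Assumption \ref{infsup} provides an inf-sup condition for $b$ on $Z\times\Lambda_0$; as $Z$ and $\Lambda_0$ are finite dimensional, the standard argument from the theory of mixed problems \cite{boffi2013mixed} turns this into surjectivity of the map $\Lambda_0\ni\mu\mapsto b(\cdot,\mu)\in Z'$, together with a bounded right inverse whose norm is controlled by the reciprocal of the inf-sup constant of Assumption \ref{infsup} --- and this constant is independent of $\delta$, since $\Lambda_0$ is. Hence there exists $\zeta_\delta\in\Lambda_0$ with $b(z,\zeta_\delta)=\ell(z)$ for all $z\in Z$ and $\|\zeta_\delta\|_\Lambda\lesssim\|\ell\|_{Z'}\lesssim\|\lambda-\eta_\delta\|_\Lambda$.

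Finally I would set $\xid:=\eta_\delta+\zeta_\delta$. Since $\Lambda_0\subseteq\Lambda_\delta$, we have $\xid\in\Lambda_\delta$; by construction $b(z,\lambda-\xid)=b(z,\lambda-\eta_\delta)-b(z,\zeta_\delta)=0$ for all $z\in Z$, i.e.\ $\opB^T(\lambda-\xid)\perp Z$; and, by the triangle inequality, $\|\lambda-\xid\|_\Lambda\le\|\lambda-\eta_\delta\|_\Lambda+\|\zeta_\delta\|_\Lambda\lesssim\|\lambda-\eta_\delta\|_\Lambda=\inf_{\xid\in\Lambda_\delta}\|\lambda-\xid\|_\Lambda$, which is precisely \eqref{xidoptimal}. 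The only step carrying any content is the second one --- extracting from the inf-sup inequality a $\delta$-uniform lifting of functionals on $Z$ by elements of $\Lambda_0$; I do not expect a genuine obstacle there, precisely because $Z$ and $\Lambda_0$ are small fixed spaces, as stressed just before Assumption \ref{infsup}. The rest is only the triangle inequality and the continuity of $\opB$.
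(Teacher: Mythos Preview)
Your proof is correct, but the route differs from the paper's. The paper defines $\xi_\delta$ as the first component of the solution to the discrete saddle-point problem
\[
\begin{array}{rcll}
s(\xi_\delta,\mu_\delta)+b(z',\mu_\delta)&=&s(\lambda,\mu_\delta)&\forall\,\mu_\delta\in\Lambda_\delta,\\
b(w,\xi_\delta)&=&b(w,\lambda)&\forall\,w\in Z,
\end{array}
\]
whose continuous solution is $(\lambda,0)$; the constraint $b(w,\lambda-\xi_\delta)=0$ is then built in, and \eqref{xidoptimal} is read off from the standard quasi-optimality estimate for Galerkin approximations of saddle-point problems. This is short but tacitly relies on the coercivity of $s$ on $\ker G^T$ (Theorem~\ref{thm:wellposedness_cont}) in addition to Assumption~\ref{infsup}. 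Your argument is more elementary and entirely self-contained: you correct the unconstrained best approximant by a lift in the fixed space $\Lambda_0$, using only the inf-sup condition of Assumption~\ref{infsup} and the continuity of $b$. The price is a slightly longer write-up; the gain is that you never touch $s$ or invoke saddle-point theory, which makes the dependence of the constant on the data more transparent.
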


\begin{proof}
	It is sufficient to take $\xh$ as the solution to the problem
	\begin{eqnarray*}
		s(\xh ,\mh ) + b (z',\mh )  &=& s(\lambda,\mh) \qquad \forall \mh  \in \Lambda_\delta \\
		b(w, \xh ) &=& b(w,\lambda), \qquad \forall w \in Z.
	\end{eqnarray*}
	Standard error estimates for the solution of saddle point problems yield \eqref{xidoptimal}.
\end{proof}

\begin{cor}
	Under Assumptions \ref{consist_strong}, \ref{approx-lambda}, \ref{ass:inverse} and \ref{infsup}, if $u \in \Vsp$, $r \leq R$ and $\lambda \in \Xi^t$, $t \leq m$, setting $u_h = A^+_h(f + B^T \lh) + z^*$ we have that 
	\[
	\| u - u_h \|_{V}  \lesssim \delta^{t} \| \lambda \|_{\Xi^t} + h^{\smo} \| u - z \|_{\Vsp}.\]
	\end{cor}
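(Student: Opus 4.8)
The plan is to simply combine the error estimate of Theorem~\ref{thm:error} with the optimal approximation result of the preceding Proposition and the approximation property of the multiplier space, so no new ideas are needed. First I would invoke the second (strong) conclusion of Theorem~\ref{thm:error}, which under Assumptions~\ref{consist_strong}, \ref{ass:inverse} and \ref{infsup} gives
\[
\| u - u_h \|_{V}  \lesssim \inf_{{\xid \in \Lambda_\delta} \atop
	{\opB^T (\lambda - \xid) \perp Z}} \| \lambda - \xid \|_{\Lambda} + h^{\smo} \| u - z \|_{\Vsp};
\]
note that Assumption~\ref{consist_strong} implies Assumption~\ref{consist_weak} only in the weak-consistency sense needed here, but in fact Theorem~\ref{thm:error} already assumes the weaker hypothesis, so the strong conclusion applies directly once \ref{consist_strong} is in force.

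Next I would apply the Proposition (estimate~\eqref{xidoptimal}), valid under Assumption~\ref{infsup}, to replace the constrained infimum by the unconstrained one:
\[
\inf_{{\xid \in \Lambda_\delta} \atop {\opB^T (\lambda - \xid) \perp Z}} \| \lambda - \xid \|_{\Lambda} \lesssim \inf_{\xid \in \Lambda_\delta} \| \lambda - \xid \|_{\Lambda}.
\]
Then I would use Assumption~\ref{approx-lambda}: since $\lambda \in \Xi^t$ with $t \leq m$, we have $\min\{t,m\} = t$, hence $\inf_{\xid \in \Lambda_\delta} \| \lambda - \xid \|_{\Lambda} \lesssim \delta^{t} \| \lambda \|_{\Xi^t}$. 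Chaining these three bounds yields
\[
\| u - u_h \|_{V} \lesssim \delta^{t} \| \lambda \|_{\Xi^t} + h^{\smo} \| u - z \|_{\Vsp},
\]
which is exactly the claim.

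There is essentially no obstacle here — the corollary is a bookkeeping exercise. The only point requiring a word of care is the hypothesis matching: one must check that the standing assumptions of the corollary (\ref{consist_strong}, \ref{approx-lambda}, \ref{ass:inverse}, \ref{infsup}) indeed cover both the hypotheses of Theorem~\ref{thm:error} (which needs \ref{consist_weak}, \ref{ass:inverse}, \ref{infsup}, plus \ref{consist_strong} for the $u$-bound) and of the Proposition (which needs only \ref{infsup}). Since \ref{consist_strong} with the stated smoothness gives the corresponding weak-consistency bound by composing with the bounded operator $\opB$, this is immediate. I would also silently use the regularity-preservation bound \eqref{eq:reg_stab} implicitly through the already-packaged right-hand side $h^\smo\|u-z\|_{\Vsp}$ of Theorem~\ref{thm:error}, so nothing further is needed. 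The proof is therefore three lines: apply Theorem~\ref{thm:error}, then the Proposition, then Assumption~\ref{approx-lambda}.
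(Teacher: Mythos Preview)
Your proposal is correct and matches the paper's intent: the corollary is stated without proof, precisely because it follows immediately by chaining Theorem~\ref{thm:error}, the Proposition~\eqref{xidoptimal}, and Assumption~\ref{approx-lambda}, exactly as you outline. Your remark that Assumption~\ref{consist_strong} implies Assumption~\ref{consist_weak} via the boundedness of $\opB$ is the only point worth making explicit, and you handle it correctly.
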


\begin{rem}
Note that it is not the smoothness of $u$ that comes into play in Theorem \ref{thm:error}, but the smoothness of $u - z$.
\end{rem}

\begin{rem}\label{rem:noncoercive}
	Our framework requires the bilinear form $a$ to be coercive on $(\ker A_k)^\perp$. On the other hand, it often happens that $a$ is only semi definite on $(\ker A_k)^\perp$ (for instance when the local problems are themselves saddle point problems), satisfying inf-sup conditions of the form
	\[
	\inf_{v \in (\ker A_k)^\perp} 	\sup_{w \in (\ker A_k)^\perp} \frac{a_k(v,w)}{\| v \|_{V_k} \| w \|_{V_k}} \gtrsim 1, \qquad 	\inf_{w \in (\ker A_k)^\perp} 	\sup_{v \in (\ker A_k)^\perp} \frac{a_k(v,w)}{\| v \|_{V_k} \| w \|_{V_k}} \gtrsim 1.
	\]
	However, following the approach of \cite{brezzif1993stabilization}, it is always possible to write down an equivalent coercive form of the local problem falling in our framework, by adding a residual term measured in the $V_k'$ norm. This approach allows us to extend our analysis to more general situations. By treating local numerical solvers as black boxes, we avoid dependence on the specific formulation of the local problem. Consequently, these solvers can be designed according to the original (non-coercive) formulation.
\end{rem}

%
%
%In order to solve the discrete problem we start to observe that the following Proposition hold.
%\begin{prop}
%For $f \in Z'$ there exists a unique element $\lh ^\perp \in (\ker \opG ^T)^\perp \subset \Lambda$ such that
%\[
%- \opG ^T \lh ^\perp = f.
%\]
%\end{prop}
%\begin{proof}
%We simply observe that we have $(\ker \opG ^T)^\perp = Z|_\Sigma$ and the result easily follows. \emph{[Non \`e proprio cos\`i ma ci somiglia]}
%\end{proof}
%
%\
%
%Assumption \ref{inf-sup} implies that $(\ker \opG ^T)^\perp \subseteq \Lambda_\delta$. We can then split the unknown $\lh  \in \Lambda_\delta$ as
%\[
%\lh  = \lh ^0 \oplus \lh ^\perp, \qquad \lh ^0 \in \ker \opG ^T, \quad \lh ^\perp \in (\ker \opG ^T)^\perp.
%\]

\subsection{Including stabilization}\label{sec:stab}

Consider a situation where the condition  $\hO/\hS< C_\Lambda$ as given by Lemma \ref{coerc_sh}, does not hold, but where we can not (or do not want to) reduce $\hO$ or increase $\hS$. Assume however that we know \eqref{inverse} holds for a coarser mesh-size $\thS$ where $ \hO/\thS< C_\Lambda$, for some {auxiliary} space $\tLh$ that is not necessarily related to $\Lambda_\delta$ but has similar asymptotic approximation properties. This can be leveraged to design a stabilization term and ensure the well-posedness of the discrete problem.

\

To this aim we let $\tpiS: \Lambda \to \tLhs$ denote a  bounded projection on $\tilde \Lambda_{\hS}$. Moreover we let $\dsc{\cdot}{\cdot}: (\Lh + \tLhs) \times (\Lh + \tLhs) \to \mathbb{R}$ denote a continuous bilinear form satisfying 
\[
\dsc{\lambda_\delta - \tpiS\lambda_\delta}{\lambda_\delta - \tpiS \lambda_\delta} \gtrsim \| \lambda_\delta - \tpiS \lambda_\delta \|^2_{\Lambda}, \qquad \text{ for all }\lambda_\delta \in \Lh,
\]
and we set
\[
j(\lh,\mh) = \dsc{\lh - \tpiS \lh}{\mh - \tpiS \mh}.
\]

We consider the following stabilized version of Problem \ref{feti_discrete}, where $\gamma \in \mathbb{R}^+$ is a stabilization parameter the size of which will influence the stability of the system.

\begin{pb}\label{feti_stabilized} Find $\lh  \in \Lambda_\delta$ and $z^* \in Z$
	such that
	\begin{eqnarray*}
		s_h(\lh ,\mh ) + \gamma j(\lh ,\mh)+ b (z^*,\mh )  &=& \langle g_h,\mh  \rangle, \qquad \forall \mh  \in \Lambda_\delta, \\
		b(w, \lh ) &=& -\langle f, w \rangle \qquad \forall w \in Z.
	\end{eqnarray*}
\end{pb}

We have the following theorem.

\begin{thm}\label{thm:error_stab}
Assume that the black box approximate solver $\opA_{h}^+$ satisfies Assumptions \ref{consist_weak} and \ref{infsup}, and that the space $\tLhs$ satisfies Assumption \ref{ass:inverse}. Then, there exists $\gamma_0$ such that, provided $\gamma > \gamma_0$, Problem \ref{feti_stabilized} is well posed, and the following error bound holds for its solution:
\[
\| \lambda - \lh  \|_{\Lambda} + \| z - z^* \|_{V} \lesssim \inf_{{\xid \in \Lambda_\delta} \atop
	{\opB^T (\lambda - \xid) \perp Z} 
} \| \lambda - \xid \|_{\Lambda}  + \inf_{\xi_{\tilde \delta} \in \tLhs} \| \lambda - \xi_{\tilde \delta}\|_{\Lambda}+  h^{\smo} \| u - z \|_{\Vsp} .
\]
Moreover, if Assumption \ref{consist_strong} holds, setting $u_h = A^+_h(f + B^T \lh) + z^*$ we have that 
\[
\| u - u_h \|_{V}  \lesssim \inf_{{\xid \in \Lambda_\delta} \atop
	{\opB^T (\lambda - \xid) \perp Z} 
} \| \lambda - \xid \|_{\Lambda}  + \inf_{\xi_{\tilde \delta} \in \tLhs} \| \lambda - \xi_{\tilde \delta}\|_{\Lambda} + h^{\smo} \| u - z \|_{\Vsp}.
\]	
	\end{thm}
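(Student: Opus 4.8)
The plan is to mirror the proof of Theorem \ref{thm:error}, replacing the role of Lemma \ref{coerc_sh} (coercivity of $s_h$ on $\hLh$) with a coercivity/inf-sup argument for the stabilized form $a_h(\lh,\mh):=s_h(\lh,\mh)+\gamma j(\lh,\mh)$ on $\hLh$. First I would establish the key algebraic estimate for the stabilization term: since $\tpiS$ is a bounded projection and $\dsc{\cdot}{\cdot}$ is continuous and satisfies the stated coercivity on the defect $\lambda_\delta-\tpiS\lambda_\delta$, we get $j(\lh,\lh)\gtrsim \|\lh-\tpiS\lh\|_\Lambda^2$. Then I would split any $\lh\in\hLh$ as $\lh=\tpiS\lh+(\lh-\tpiS\lh)$ and estimate $s_h(\lh,\lh)$ from below: reproduce the bound from Lemma \ref{coerc_sh} but now apply the inverse inequality (Assumption \ref{ass:inverse}) to $\tpiS\lh\in\tLhs$ rather than to $\lh$, which is legitimate because it is $\tLhs$, not $\Lambda_\delta$, that is assumed to satisfy \eqref{inverse} with the coarser meshsize $\thS$ and $h/\thS<C_\Lambda$. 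This yields $\langle\opB(\opA^+-\opA^+_h)\opB^T\tpiS\lh,\tpiS\lh\rangle\le C(h/\thS)^\smo\|\tpiS\lh\|_\Lambda^2$, which is controllably small, while the cross terms and the $\lh-\tpiS\lh$ contribution are absorbed using Young's inequality against $\gamma j(\lh,\lh)$ for $\gamma$ large. Combining with $s(\tpiS\lh,\tpiS\lh)\gtrsim\|\tpiS\lh\|_\Lambda^2$ (valid since $\lh\in\Zperp$, but note $\tpiS\lh$ need not lie in $\Zperp$, so I would instead use continuity of $s$ plus the contradiction-free estimate $s(\lh,\lh)\ge\Cs\|\lh\|^2$ directly on $\lh$ and only use the split to control the perturbation term $\langle\opB(\opA^+-\opA^+_h)\opB^T\lh,\lh\rangle$ via $\lh=\tpiS\lh+(\lh-\tpiS\lh)$), I conclude $a_h(\lh,\lh)\gtrsim\|\lh\|_\Lambda^2$ on $\hLh$ for $\gamma>\gamma_0$.

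Next, the well-posedness of Problem \ref{feti_stabilized} follows from this coercivity together with the inf-sup control of $Z$ by $\Lambda_0\subset\Lambda_\delta$ (Assumption \ref{infsup}), exactly as in the unstabilized case: the bilinear form $a_h$ is coercive on the kernel $\hLh$ of the constraint $b(\cdot,z)=0$ restricted to $\Lambda_\delta$, and $b$ satisfies the inf-sup condition on $Z\times\Lambda_\delta$, so the standard saddle-point theory \cite{boffi2013mixed} applies.

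For the error estimate I would run the same Strang-type argument as in the proof of Theorem \ref{thm:error}. Pick $\xid\in\Lambda_\delta$ with $\opB^T(\lambda-\xid)\perp Z$; then $a_h(\lh-\xid,\lh-\xid)\gtrsim\|\lh-\xid\|_\Lambda^2$, and expanding $a_h(\lh-\xid,\cdot)=a_h(\lambda-\xid,\cdot)+[\text{consistency terms}]$ gives, for all $\mh\in\Lambda_\delta$,
\[
\|\lh-\xid\|_\Lambda \lesssim \|\lambda-\xid\|_\Lambda + \gamma\,\sup_{\mh}\frac{j(\lambda,\mh)}{\|\mh\|_\Lambda} + \sup_{\mh}\frac{|[s(\lambda,\mh)-\langle g,\mh\rangle]-[s_h(\lambda,\mh)-\langle g_h,\mh\rangle]|}{\|\mh\|_\Lambda}.
\]
The last term is $\lesssim h^\smo\|u-z\|_{\Vsp}$ by Assumption \ref{consist_weak} and \eqref{eq:u_def}, just as before. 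The new term is the stabilization consistency error $j(\lambda,\mh)=\dsc{\lambda-\tpiS\lambda}{\mh-\tpiS\mh}$; using continuity of $\dsc{\cdot}{\cdot}$, boundedness of $\tpiS$, and the fact that $\tpiS$ is a projection onto $\tLhs$, I would bound $\|\lambda-\tpiS\lambda\|_\Lambda\lesssim\inf_{\xi_{\tilde\delta}\in\tLhs}\|\lambda-\xi_{\tilde\delta}\|_\Lambda$, which produces the extra $\inf_{\xi_{\tilde\delta}\in\tLhs}\|\lambda-\xi_{\tilde\delta}\|_\Lambda$ term in the estimate. The bound on $\|z-z^*\|_V$ via $\Lambda_0$ and on $\|u-u_h\|_V$ via Assumption \ref{consist_strong} then follow verbatim from the proof of Theorem \ref{thm:error}, together with the triangle inequality $\|\lambda-\lh\|_\Lambda\le\|\lambda-\xid\|_\Lambda+\|\lh-\xid\|_\Lambda$ and the observation that $\xid$ can be taken quasi-optimal (using the Proposition preceding this remark, or directly).

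The main obstacle I anticipate is the coercivity step, specifically handling the perturbation $\langle\opB(\opA^+_h-\opA^+)\opB^T\lh,\lh\rangle$ for $\lh\in\hLh$ when $\Lambda_\delta$ itself does \emph{not} satisfy the favorable inverse inequality. The trick is that $\gamma j$ controls $\|\lh-\tpiS\lh\|_\Lambda$, so one writes $\lh=\tpiS\lh+(\lh-\tpiS\lh)$ and only needs the inverse inequality on $\tpiS\lh\in\tLhs$ (which holds with the good ratio $h/\thS<C_\Lambda$), while all occurrences of $\lh-\tpiS\lh$ are absorbed into $\gamma j(\lh,\lh)$ via Cauchy–Schwarz and Young; choosing $\gamma_0$ large enough then closes the estimate. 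Making the bookkeeping of these absorptions clean — tracking exactly how large $\gamma_0$ must be in terms of $C_r$, $\Cs$, $C_\Lambda$ and the continuity constant of $\dsc{\cdot}{\cdot}$ — is the only genuinely delicate point; everything else is a routine transcription of the unstabilized proof.
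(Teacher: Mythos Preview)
Your proposal is correct and follows essentially the same route as the paper: establish coercivity of $s_h+\gamma j$ on $\hLh$ by writing $\lh=\tpiS\lh+(\lh-\tpiS\lh)$, applying the inverse inequality only to $\tpiS\lh\in\tLhs$, using $s(\lh,\lh)\ge\Cs\|\lh\|_\Lambda^2$ directly (you correctly note that $\tpiS\lh$ need not lie in $\Zperp$), and absorbing all occurrences of $\lh-\tpiS\lh$ into $\gamma j(\lh,\lh)$ via Young's inequality; then run the Strang argument of Theorem~\ref{thm:error} with the extra stabilization consistency term.

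One small point to tighten: you write the stabilization consistency error as $j(\lambda,\mh)$, but $\dsc{\cdot}{\cdot}$ is only assumed to be defined on $(\Lh+\tLhs)\times(\Lh+\tLhs)$, so $j(\lambda,\cdot)$ is not a priori meaningful for general $\lambda\in\Lambda$. The paper sidesteps this by keeping the extra term as $-\gamma j(\xid,e_h)$ (both arguments discrete) in the Galerkin expansion, and then bounding $\|\xid-\tpiS\xid\|_\Lambda\le\|(I-\tpiS)\lambda\|_\Lambda+\|(I-\tpiS)(\lambda-\xid)\|_\Lambda\lesssim\inf_{\xi_{\tilde\delta}\in\tLhs}\|\lambda-\xi_{\tilde\delta}\|_\Lambda+\|\lambda-\xid\|_\Lambda$. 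This yields exactly the estimate you claim, without ever evaluating $j$ outside its stated domain.
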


\begin{proof} To prove well posedness we  follow the proof of Lemma \ref{coerc_sh}. 
For $\lh \in \Lh \cap \widehat \Lambda$	we can write
\[
s_h(\lh ,\lh ) = \left<B A_h^+ B^T \lh ,\lh \right> = \left<B A_h^+ B^T \tpiS \lh ,\lh \right> + \left<B A_h^+ B^T (I - \tpiS) \lh ,\lh \right>.
\]
In the first term we first add and subtract $- B A^+ B^T \tpiS \lh$  on the left slot to get
\begin{gather*}
	\left<B A_h^+ B^T \tpiS \lh ,\lh \right> = \left<B (A_h^+ - A) B^T \tpiS \lh ,\lh \right> + \left<B A^+ B^T \tpiS \lh ,\lh \right>,
\end{gather*}
and then, letting $\Cs $ be the coercivity constant of $s(\cdot,\cdot)$, we bound from below the  second term on the right hand side by adding and subtracting $- B A^+ B^T \lh $, which allows us to write 
\begin{multline*}
	\left<B A^+ B^T \tpiS \lh ,\lh \right> = \left<B A^+ B^T (\tpiS  - I)\lh ,\lh \right>  + s(\lh ,\lh ) \\
	\ge \left<B A^+ B^T (\tpiS  - I)\lh ,\lh \right>+ \Cs  \|\lh \|_{\Lambda}^2.
%	\\
%	\ge (\Cs  - \frac14 \epsilon) \|\lh \|_{\Lambda}^2  - \epsilon^{-1} j(\lh ,\lh ).
\end{multline*}
Now we can bound 
\begin{multline*}
	\left<B A^+ B^T (\tpiS  - I)\lh ,\lh \right>  \leq C \| \tpiS \lh - \lh \|_{\Lambda} \| \lh \|_{\Lambda} \leq C \sqrt{j(\lh,\lh)} \| \lh \|_{\Lambda}\\ \leq c_1 \epsilon^{-1} j(\lh,\lh) + \frac 1 4 \epsilon \| \lh \|_{\Lambda}^2.	
	\end{multline*}
Analogously we have that
\[
	\left<B A_h^+ B^T (\tpiS  - I)\lh ,\lh \right>  \leq c_2 \epsilon^{-1} j(\lh,\lh) + \frac 1 4 \epsilon \| \lh \|_{\Lambda}^2.	
\]

Using now the assumed properties of the black box approximate solver $A^+_h$ we have that
\begin{multline*}
	\left<B (A_h^+ - A) B^T \tpiS \lh ,\lh \right> \leq C \hO^{\smo} \|\tpiS \lh \|_{\Ls} \|\lh \|_{\Lambda} 
	\leq \frac {\hO^{\smo} }{\thS^{\smo}} \|  \tpiS \lh \|_V \|\lh \|_{\Lambda} 
	\leq c_3 \frac {\hO^{\smo} }{\thS^{\smo}}  \|\lh \|_{\Lambda}^2.
\end{multline*}
Collecting terms we see that
\[
s_h(\lh ,\lh ) \ge \left(\Cs  - \frac12 \epsilon - c_3 \left(\frac{\hO}{\thS} \right)^{\smo}\right) \|\lh \|_{\Lambda}^2 - 2  (c_1+c_2)\epsilon^{-1} j(\lh ,\lh ).
\]
Assuming that $\epsilon = 1/2 \Cs $ and 
\[
c_3 \left(\frac{\hO}{\thS} \right)^{\smo} \leq \frac14 \Cs 
\]
the partial coercivity follows 
\begin{equation}\label{partialcoercivity}
s_h(\lh ,\lh ) \ge \frac12 \Cs  \|\lh \|_{\Lambda}^2 - 4 \Cs  (c_1+c_2) j(\lh ,\lh ).
\end{equation}
Considering this bound in the stability estimate of Problem \ref{feti_stabilized}  we see that
\[
s_h(\lh ,\lh ) + \gamma j(\lh ,\lh )  \ge \frac12 \Cs  \|\lh \|_{\Lambda}^2 + (\gamma- 4 \Cs  (c_1+c_2)) j(\lh ,\lh )
\]
and hence
the stabilized method has similar stability properties as the inf-sup stable method if $\gamma \ge 4 \Cs  (c_1+c_2)$.

\

We conclude that for such a choice of $\gamma$,
\begin{equation}\label{eq:coerciv}
	\frac12 \Cs  \|\lh \|_{\Lambda}^2 \leq s_h(\lh ,\lh ) + \gamma j(\lh ,\lh ).
\end{equation}

\

%\subsection{Error analysis}
For the error analysis we proceed as in Theorem \ref{thm:error}, using now the bound \eqref{eq:coerciv}. Let once again $\xh \in \Lh$ satisfy 
\eqref{condxh}, and let $e_h = \lh - \xh$, with now $\lh$ solution to Problem \ref{feti_stabilized}. We have
\[
\frac12 \Cs \|e_h \|_{\Lambda}^2 \leq s_h(e_h,e_h) + \gamma j(e_h,e_h).
\]
We see that
\[
s_h(e_h,e_h) + \gamma j(e_h,e_h) = 	\sh(\lambda - \xh, e_h) 
+ \langle g_h  - g, \eh \rangle 
+ s(\lambda,\eh) - \sh(\lambda,\eh) 
- \gamma j (\xh, e_h).
\]
The first four terms are bounded as in the proof of Theorem \ref{thm:error}. 
As far as the last term is concerned, we observe that
\[
j(\xh, e_h) \lesssim \| \xh - \tpiS \xh \|_{\Lambda} \| e_h \|_{\Lambda}.
\]
Now, adding and subtracting $(I - \tpiS)\lambda$ we see that
\[
\| \xh - \tpiS  \lambda \|_{\Lambda}  \leq \| \lambda - \tpiS \lambda \|_{\Lambda} + 
\|(I-\tpiS) (\lambda -  \xh) \|_{\Lambda} \lesssim
 \| \lambda - \tpiS \lambda \|_{\Lambda} + 
\| \lambda -  \xh \|_{\Lambda}.
\]
Then, proceeding as in the proof of Theorem \ref{thm:error}, we obtain the bound on $\| \lambda - \lh \|_{\Lambda}$. 
 The bounds on $\| z - z^* \|_{V}$ and on $\| u - u_h \|_{V}$ also follow using the argument of Theorem \ref{thm:error}.
 \end{proof}

\NOTE{ Error estimate with all the steps. Let $\xh \in \Lh $ such that $b(w,\lambda - \xh) = 0$ for all $w \in Z$.
		 arbitrary. We set $e_h = \lh - \xh$. We have
	\begin{multline}
		\| \lh - \xh \|_{\Lambda}^2 \lesssim s_h(\lh-\xh,\lh-\xh) + \gamma j(\lh - \xh,\lh - \xh) \\
		% II 
		= \sh(\lambda - \xh, e_h) + \sh(\lh - \lambda,\eh) 
		\pm s(\lambda,\eh)  + \gamma j (\lambda - \xh,\eh) + \gamma j (\lh - \lambda, \eh) \\
		% III
		=
		\sh(\lambda - \xh, e_h) 
		+ \langle g_h , \eh \rangle 
		- \sh(\lambda,\eh) 
		+ s(\lambda,\eh) 
	- \langle g , \eh \rangle 
		- \gamma j (\xh, e_h) \\
		% IV
		\lesssim \| \lambda - \xh \|_{\Lambda} \| \eh \|_{\Lambda} + \| g_h - g \|_{\Lambda'}  \| \eh \|_{\Lambda} + \| B (A^+ - A^+_h B^T) \lambda \|_{\Lambda'} \| \eh \|_{\Lambda} +  | j(\xi_\delta,\eh)|
	\end{multline}
	Now we have
	\[
	| j (\xh,\eh) | = | \dsc{\xh - \tpiS \xh}{\eh - \tpiS \eh}| \lesssim \| \xh - \tpiS \xh \|_\Lambda \| \eh \|_\Lambda 
	\]
	Adding and subtracting $\lambda - \tpiS\lambda$ we have
\[
\| \xh - \tpiS \xh \|_\Lambda \leq \| \xh - \lambda \|_\Lambda + \| \lambda - \tpiS \lambda \|_\Lambda 
\]
and then
\[
|j(\xh,\eh) \lesssim (\| \xh - \lambda \|_{\Lambda} + \| \lambda - \tpiS \lambda \|_{\Lambda}) \| e_h \|_{\Lambda}.
\]
Then we can write
\begin{gather*}
		\| \lh - \xh \|_{\Lambda}^2 \lesssim 
		\| \lambda - \xh \|_{\Lambda} \| \eh \|_{\Lambda} + \| g_h - g \|_{\Lambda'}  \| \eh \|_{\Lambda} \\
		+ \| B (A^+ - A^+_h B^T) \lambda \|_{\Lambda'}    \| \eh \|_{\Lambda}
		+   \| \xh - \lambda \|_{\Lambda} + \| \lambda - \tpiS \lambda \|_{\Lambda}   \| \eh \|_{\Lambda}
\end{gather*}
whence
\begin{gather*}
	\| \lh - \xh \|_{\Lambda} \lesssim 
	\| \lambda - \xh \|_{\Lambda} + \| g_h - g \|_{\Lambda'}  + \| B (A^+ - A^+_h B^T) \lambda \|_{\Lambda'} + \| \lambda - \tpiS \lambda \|_{\Lambda}.
\end{gather*}
}

The lower bound $\gamma > \gamma_0$ on the stabilization parameter, required by Theorem \ref{thm:error_stab}, can be relaxed if $A_h^+$ is a positive semi definite operator, so that $s_h(\lh,\lh) \ge 0$ for all $\lh \in \Lh$. More precisely, we have the following corollary.
\begin{cor}\label{cor:3.13}
	Under the assumptions of Theorem \ref{thm:error_stab}, if  $s_h$ is positive semi-definite then the error bounds hold for all $\gamma>0$, with a hidden constant that scales as $\gamma^{-1}$.
\end{cor}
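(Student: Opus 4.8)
The plan is to revisit only the coercivity step in the proof of Theorem \ref{thm:error_stab}. There, the lower bound $\gamma>\gamma_0$ was forced because turning the \emph{partial} coercivity \eqref{partialcoercivity} into the genuine coercivity \eqref{eq:coerciv} required absorbing a negative multiple of $j(\lh,\lh)$ into $\gamma\,j(\lh,\lh)$. When $s_h$ is positive semi-definite one can instead do this by a convex-combination argument, paying with a coercivity constant that degenerates linearly as $\gamma\to0^+$.

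First I would recall that \eqref{partialcoercivity},
\[
s_h(\lh,\lh)\ \ge\ \tfrac12\Cs\,\|\lh\|^2_\Lambda-4\Cs(c_1+c_2)\,j(\lh,\lh),\qquad \lh\in\hLh,
\]
was obtained using only the mesh condition $c_3(\hO/\thS)^\smo\le\tfrac14\Cs$ and the hypotheses of Theorem \ref{thm:error_stab}, and does \emph{not} involve $\gamma$. Set $\gamma_0=4\Cs(c_1+c_2)$. For $\gamma\ge\gamma_0$ the statement of Theorem \ref{thm:error_stab} applies unchanged; for $0<\gamma<\gamma_0$ I would write $s_h(\lh,\lh)=\tfrac{\gamma}{\gamma_0}s_h(\lh,\lh)+(1-\tfrac{\gamma}{\gamma_0})s_h(\lh,\lh)$, bound the first summand from below by \eqref{partialcoercivity} and the second, by positive semi-definiteness, by $0$, obtaining
\[
s_h(\lh,\lh)+\gamma\,j(\lh,\lh)\ \ge\ \tfrac{\gamma}{\gamma_0}\bigl(\tfrac12\Cs\|\lh\|^2_\Lambda-\gamma_0\,j(\lh,\lh)\bigr)+\gamma\,j(\lh,\lh)\ =\ \tfrac{\Cs}{2\gamma_0}\,\gamma\,\|\lh\|^2_\Lambda .
\]
In both cases this yields a coercivity estimate $\kappa(\gamma)\|\lh\|^2_\Lambda\le s_h(\lh,\lh)+\gamma\,j(\lh,\lh)$ on $\hLh$, with $\kappa(\gamma)=\min\{\tfrac12\Cs,\ \tfrac{\Cs}{2\gamma_0}\gamma\}$; combined with Assumption \ref{infsup} exactly as in Theorem \ref{thm:error_stab}, it gives well posedness of Problem \ref{feti_stabilized} for every $\gamma>0$.

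Then I would rerun the error analysis of Theorem \ref{thm:error_stab} verbatim, taking $\xh\in\Lh$ satisfying \eqref{condxh}, $\eh=\lh-\xh$, and using the identity
\[
s_h(\eh,\eh)+\gamma\,j(\eh,\eh)=s_h(\lambda-\xh,\eh)+\langle g_h-g,\eh\rangle+s(\lambda,\eh)-s_h(\lambda,\eh)-\gamma\,j(\xh,\eh),
\]
but now with $\kappa(\gamma)\|\eh\|^2_\Lambda$ on the left and tracking the dependence on $\gamma$. The first four terms are bounded, as in the proofs of Theorems \ref{thm:error} and \ref{thm:error_stab}, by $C\bigl(\|\lambda-\xh\|_\Lambda+\hO^\smo\|u-z\|_{\Vsp}\bigr)\|\eh\|_\Lambda$; the last by $\gamma\,|j(\xh,\eh)|\lesssim\gamma\bigl(\|\lambda-\xh\|_\Lambda+\|\lambda-\tpiS\lambda\|_\Lambda\bigr)\|\eh\|_\Lambda$, where $\|\lambda-\tpiS\lambda\|_\Lambda\lesssim\inf_{\xi_{\tilde\delta}\in\tLhs}\|\lambda-\xi_{\tilde\delta}\|_\Lambda$ by boundedness of $\tpiS$. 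Dividing by $\kappa(\gamma)$ and cancelling one factor $\|\eh\|_\Lambda$, the first four terms pick up $\kappa(\gamma)^{-1}$, which for $\gamma\le\gamma_0$ equals $2\gamma_0/(\Cs\gamma)\sim\gamma^{-1}$, while the stabilization term, carrying a compensating $\gamma$, contributes only $\gamma/\kappa(\gamma)\lesssim1$. Using $\|\lambda-\lh\|_\Lambda\le\|\lambda-\xh\|_\Lambda+\|\eh\|_\Lambda$, taking the infimum over admissible $\xh$, and deducing the bounds on $\|z-z^*\|_V$ and $\|u-u_h\|_V$ as in Theorems \ref{thm:error} and \ref{thm:error_stab}, gives the claimed estimates with hidden constant $O(\gamma^{-1})$.

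The argument is essentially bookkeeping: no estimate beyond those in the proof of Theorem \ref{thm:error_stab} is needed. The only points to check are that \eqref{partialcoercivity} is genuinely $\gamma$-independent — which is what makes the convex combination legitimate for arbitrarily small $\gamma$ — and that the consistency term $\gamma\,j(\xh,\eh)$ does not spoil the $\gamma^{-1}$ scaling, which it cannot, since its compensating factor $\gamma$ cancels the worst growth $\kappa(\gamma)^{-1}\sim\gamma^{-1}$.
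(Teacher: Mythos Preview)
Your proposal is correct and follows essentially the same approach as the paper: the paper also exploits positive semi-definiteness to write $s_h(\lh,\lh)+\gamma j(\lh,\lh)\ge \eta\,s_h(\lh,\lh)+\gamma j(\lh,\lh)$ for $\eta\in(0,1)$, then applies \eqref{partialcoercivity} and chooses $\eta<\gamma/(4\Cs(c_1+c_2))$, which is exactly your convex-combination step with $\eta=\gamma/\gamma_0$. The paper stops after establishing coercivity (``the claim follows''), whereas you additionally track the $\gamma$-dependence through the error identity, confirming the $\gamma^{-1}$ scaling; this extra bookkeeping is correct and a useful complement.
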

\begin{proof}
	We only need to show that coercivity holds for all $\gamma>0$. To this end we observe that, as $s_h(\lh,\lh) \geq 0$,  for $\eta \in (0,1)$ arbitrary we have that
	\[
	s_h(\lh,\lh) + \gamma j(\lh,\lh)  \ge \eta s_h(\lh,\lh) + \gamma j(\lh,\lh).
	\]
Using \eqref{partialcoercivity} to bound the first term of the right hand side we get
	\[
	s_h(\lh,\lh) + \gamma j(\lh,\lh) \ge \frac{\eta}{2} \Cs  \|\lh \|_{\Lambda}^2 + (\gamma - 4 \eta \Cs  (c_1+c_2)) j(\lh ,\lh ).
	\]
	We obtain  coercivity by choosing $\eta < \gamma/(4 \Cs  ((c_1+c_2))$ and the claim follows.
\end{proof}

Corollary \ref{cor:3.13} applies, in particular, to the case where, mimicking \eqref{eq:pseudo_inverse_def} on the discrete level, for all $k$, $A_{h,k}^+$ is defined through a Galerkin projection on a finite dimensional subspace $C_{h,k} \subset \carrierAk$. Indeed, for $g \in V_k'$, we can define $\opA ^+_{h,k} g \in C_{h,k}$ as the solution to
\begin{equation}\label{eq:pseudo_inverse_disc}
a_k(\opA ^+_{h,k} g , w_h) = \langle g , w_h \rangle, \qquad \text{ for all } w_h \in C_{h,k}.
\end{equation}
Then, to prove the positive semi definiteness of $s_h$, using \eqref{eq:pseudo_inverse_disc} and the coercivity of $a_k$ on $\carrierAk$ we can write, for all $\lh\in \Lh$,
\begin{equation*}
	0 \leq a_k (\opA^+_{h,k} B^T \lh, \opA^+_{h,k} B^T \lh ) = \langle B^T \lh, \opA^+_{h,k} B^T \lh \rangle \\= \langle  \lh, B \opA^+_{h,k} B^T \lh \rangle = s_h(\lh.\lh).
	\end{equation*}
%
%
%Letting $C_{h,k}^\perp$ denote the orthogonal complement of $C_{h,k}$ in $V_k$ and letting $C_{h,k}^* \in V_k'$ denote the annihilator of $C_{h,k}^\perp$,  we introduce the othogonal projection $\pi_C: V' \to C^*_{h,k}$ defined by $\langle \pi_C g , w_h \rangle = \langle g , w_h \rangle,$  for all $w_h \in C_{h,k}$. From \eqref{eq:pseudo_inverse_disc} it then follows that $\opA ^+_{h,k} g = \opA ^+_{h,k} \pi_C g$.
%If \eqref{eq:pseudo_inverse_disc} holds then, since $\opA ^+_{h,k} g = \opA ^+_{h,k} \pi_C g \in C_{h,k}$
%\[
%s_h(\lh,\lh) = \langle B \opA ^+_{h} B^T \lh,\lh \rangle = \langle  \opA ^+_{h} \pi_C B \lh,\pi_C B \lh \rangle.
%\]
%Using the definition \eqref{eq:pseudo_inverse_disc} and the properties of $\pi_C$ it follows that
%\[
%\langle  \opA ^+_{h} B \lh, B \lh \rangle = \langle  \opA ^+_{h} \pi_C B \lh, \pi_C B \lh \rangle = a(\opA ^+_{h} \pi_C B \lh , \opA ^+_{h}\pi_C B \lh) \ge 0.
%\]
%For such positive local solvers we have the following corollary.

\begin{rem}
We note that since $\Lambda_\delta$ is finite dimensional, it is often possible to choose $j(\xh,\xh)$ on a form convenient for computation. Indeed in some cases the projection operator can be eliminated and replaced by some other operator acting directly on $\xh$. This is the case for instance when, in the domain decomposition framework (see Section \ref{dd}), $\tLh$ and $\Lambda_\delta$ are different finite element spaces of different mesh size, but with the same polynomial order. Then typically $\dsc{\xh - \tpiS \xh}{\xh - \tpiS \xh}$ can be replaced by a penalty term acting on the jumps of derivatives of $\xh$ alone, see \cite{Bu14}.  In such a framework, computable bilinear forms $[\cdot,\cdot]_\delta$ can also be designed by resorting to suitable localization results, see \cite{bertoluzza2023localization}.  For an alternative way of stabilizing in an abstract framework, see also \cite{bertoluzza2021algebraic}.
\end{rem}
\newcommand{\dlh}{\eta_\delta}

\

%!TEX root = coupling_v7.tex

\section{Solving the resulting discrete problem}\label{sec:feti}
Since we are interested in large problems, it is natural to consider an iterative solver for the reduced system of Problem \ref{feti_discrete}, where the stiffness matrix relative to the approximate Schur complement operator needs not be assembled, but where, at each iteration, the approximate black box solvers are called upon to evaluate their action on a given element of $\Lh$.  We note that for very large systems it may be advantageous to proceed in a nested fashion, where also the subproblems are themselves decomposed in smaller subproblems, until  the local solvers are so small that direct solvers can be applied. 
Focusing on the unstabilized Problem \ref{feti_discrete}, and
following once again an approach inspired by the FETI method, in order to efficiently solve Problem \ref{feti_discrete} we will need to
\begin{enumerate}[a)]
	\item reduce the saddle point problem posed on $\Lambda_\delta \times Z$ to a positive definite problem, defined on the subspace $\hLh$ of $\Lh$, which can be solved by a Krylov type method (e.g. PCG, if the corresponding operator is symmetric);
	\item construct a preconditioner for the the restriction of the bilinear form $s_h$ to the  subspace $\hLh = \Lh \cap \widehat\Lambda$.
\end{enumerate}

To this aim, we will leverage a number of linear operators that we will have to construct either directly or through the corresponding bilinear form.

\subsection*{Reduction to $\hLh$}  In order to reduce the problem to the subspace $\hLh$, we start by 
constructing a mapping $\Embed: \hLh' \to \Lh'$ which will also be instrumental in the definition of the preconditioner.  To this aim, letting  $\Gd: Z \to \Lh'$ be defined, with a slight abuse of notation, as 
\begin{equation}\label{defG2}
	\langle \Gd z , \lambda \rangle = \langle \Bd z, \lambda \rangle = b(z,\lambda), \ \forall z\in Z, \ \lambda \in \Lh,
\end{equation}we start by choosing an easily computable discrete symmetric positive semi definite bilinear form $\tsigma: \Lh' \times \Lh' \to \mathbb{R}$, which we assume to be continuous on $\Lh'$ and coercive on $\Range{\opG}$: for all $z \in Z$, $\phi,\psi \in \Lh'$
\begin{equation}\label{boundsigmastar}
	\sigma^*(G z,G z) \gtrsim \| Gz \|_{\Lambda'}^2, \qquad \sigma^*(\phi,\psi) \lesssim \| \phi \|_{\Lambda'} \| \psi \|_{\Lambda'}.
\end{equation}
The bilinear form $\sigma^*$ induces a scalar product on $\Range\Gd$, which allows to define a projector $\Qsigma: \Lh' \to \Range\Gd$ (easily computable) as
\begin{equation}\label{defQsigma}
	\Qsigma \phi \in \Range{\opG}\qquad \text{ solution of }\qquad \tsigma(\Qsigma \phi,\psi) = \tsigma(\phi,\psi)\qquad \forall \psi \in \Range\opG. 
\end{equation}
\begin{rem}
Choosing $\sigma^*$ equal to the $\Lambda'$ scalar product automatically yields \eqref{boundsigmastar}. In principle, we could then use such a scalar product in the place of the bilinear form $\sigma^*$. $\Qsigma$ would then be the $\Lambda'$ orthogonal projection onto $\Range G$.
However, in our framework, such a choice would  often results in  $\Qsigma$ being quite difficult to compute. This is the reason why a simpler bilinear form $\sigma^*$ needs to be introduced.
\end{rem}
Let now $\tphi \in \hLh'$. 
We define $\Embed\tphi  \in \Lh'$ by setting
\begin{equation}\label{defK}
	\Embed \tphi = \phi - \Qsigma \phi,\end{equation}
where
$\phi \in \Lh'$ is any element such that 
\[
\langle \tphi , \hlambda \rangle = \langle \phi , \hlambda \rangle, \qquad \text{ for all }\hlambda \in \hLh.
\]

It is not difficult to check that if $\langle \phi - \phi' , \hlambda \rangle = 0$ for all $\hlambda \in \hLh$, then $\phi - \Qsigma \phi = \phi' - \Qsigma\phi'$, so that $\Embed$ is well defined, independently on how $\phi$ is chosen.
\begin{rem}\label{rem:Embed}
If we think of  $\hLh$ and $\Lh$ as independent spaces, and
 introduce an operator $\Inject: \hLh \to \Lh$, representing the natural injection, we can identify $\hLh'$ with the quotient space $\Lh'/\ker \Restrict$ where $\Restrict = \Inject^T : \Lh' \to \hLh'$ is the natural restriction operator, defined as
	\[
	\langle \Restrict \phi, \hlambda \rangle = \langle \phi , \Inject \hlambda \rangle \qquad \phi \in \Lh',\ \hlambda \in \hLh. 
	\] 
It is possible to show that the equivalence class $[\phi]$ of an element $\phi \in \Lh'$ can be identified with $\phi + \Range\Gd$.	The operator $\Embed: \hLh' \to \Lh'$ can  be interpreted as the selection of a specific ``canonical'' representative in $\Lh'$ of an equivalence class $\tphi = [\phi] \simeq \phi + \Range{G}$ in $\hLh'$. Remark  that $\Embed$ can be naturally extended to a mapping $\Embed : \Lh' \to \Lh'$ that, given any element $\phi$ of $\Lh'$ selects the canonical representative of the equivalence class $[\phi] = \phi + \Range{G}$.
	\end{rem}

\NOTE{
	I know that $\Lh' = \Range \opG \oplus \check \Lambda_\delta$ with $\check \Lambda_\delta = \Range {\Id{\Lh'} - \Qsigma}$. We can see that this is a direct sum, Indeed if $\phi \in  \Range \opG \cap  \Range {\Id{\Lh'} - \Qsigma}$ then there exists   $\phi \in \Lh'$ such that
	\[
	\phi = \Qsigma \phi = \phi' - \Qsigma \phi'.
	\]
	Then $\phi' - \phi = \Qsigma \phi'$ which implies that $\phi' \in \Range{\Qsigma}$ and then $\Id{\Lh'} \phi'- \Qsigma \phi' = 0$. 
	Corresponding to this splitting, I have a splitting of the dual space $\Lh$ as $\tilde\Lambda_\delta \oplus \ker G^T$, as depicted by the following scheme:
	\[\begin{array}{ccccc}
		\Range{\opG} & &\oplus& &\check\Lambda\\
		&\searrow & &\swarrow& \\
		\updownarrow & & \perp & & \updownarrow \\
		& \swarrow & & \searrow & \\
		\tilde \Lambda_\delta& &\oplus & &\hLh
	\end{array}
	%	\caption{Decomposition of $\Lh$}
	\]
	It is easy to check that $\Range G = (\ker G^T)^\perp$. Indeed, we have
	\begin{multline*}
		\{ \hlambda \in \Lh: \hlambda  \perp \Range{G}\} =
		\{ \hlambda  \in \Lh: \langle Gz,\hlambda  \rangle = 0, \forall z \in Z\} 
		\\  =   \{ \hlambda  \in \Lh: \langle z, G^T \hlambda \rangle = 0, \forall z \in Z\} = \ker G^T.
	\end{multline*}
	I have $\Qsigma = G (G^T \Sigma G)^{-1} G^T \Sigma$. $\Qsigma: \Lh'\to \Lh'$ and $\QsigmaT  : \Lh \to \Lh$. I have that $(\Id{\Lh'} - \Qsigma)^T$ is the projection onto $\hLh$ orthogonally to $\Range{\opG}$. Check that this is a projection. $\lh \in \hLh = \ker G^T$. $\lh \in \ker G^T$, $\phi \in \Lh'$
	\[
	\langle \phi , \lh - \QsigmaT  \lh \rangle = \langle \phi, \lh \rangle - \langle \Qsigma \phi, \lh \rangle =
	\langle \phi, \lh \rangle - \langle G z, \lh \rangle =  \langle \phi, \lh \rangle
	\]
	implying that $\lh  - \QsigmaT  \lh = \lh$ for all $\lh \in \hLh$. Need to show that for all $\lh \in \Lh$, $\lh - \QsigmaT  \lh \in \ker G^T$. 
	\[
	\langle G^T (\lh - \QsigmaT  \lh) , z \rangle = 
	\langle \lh - \QsigmaT  \lh , G z \rangle = 
	\langle \lh , G z \rangle  - \langle   \lh , \Qsigma G z \rangle = \langle \lh , G z \rangle  - \langle   \lh , G z \rangle = 0
	\]
	\
	\[
	\Lh = \tilde \Lambda_\delta \oplus \hLh.
	\]
	with
	\[
	\hLh = \{ \lambda \in \Lh: \langle \phi ,\lambda \rangle = 0, \forall \phi \in \Range{G}\}, \quad 
	\tilde \Lambda_\delta = \{ \lambda \in \Lh: \langle \phi ,\lambda \rangle = 0, \forall \phi \in \check \Lambda_\delta\}.
	\]
	We have that $(\hLh, \check \Lambda_\delta)$ and $(\tilde \Lambda_\delta ,\Range{G})$ form two biorthogonal couples. 
}

\begin{rem}\label{rem:buildsigmastar} One possible way of constructing a bilinear form  $\sigma^*$ with the desired properties is by duality with respect to a scalar product on the space $\Lambda_0$ given by Assumption \ref{infsup}. Indeed, let $\sigma: \Lambda_0 \times \Lambda_0 \to \mathbb{R}$ be a scalar product inducing  on $\Lambda_0$ a norm equivalent to $\| \cdot \|_{\Lambda}$.
More precisely, assume that for all $\lambda,\mu \in \Lambda_0$, $\sigma$ satisfies
\begin{equation}\label{condsigma}
	\sigma(\lambda,\lambda) \gtrsim \| \lambda \|^2_{\Lambda}, \qquad \sigma(\lambda,\mu) \lesssim \| \lambda \|_{\Lambda} \| \mu \|_{\Lambda}.
\end{equation}
Letting $\{\eta_i,\ i=1,\cdots,N\}$ be a basis for $\Lambda_0$, $\Sigma = (\sigma_{ij})_{ij}$, with $\sigma_{ij} = \sigma(\eta_j,\eta_i)$, the  stiffness  matrix induced by $\sigma$, and $\Sigma^{-1} = (\sigma^*_{ij})_{ij}$ its inverse, we can define $\sigma^*$ as
\[
\sigma^*(\phi,\psi) = \sum_{i,j=1}^N \sigma^*_{ij} \langle \phi,\eta_j \rangle \langle \psi,\eta_i \rangle.
\]
By the arguments in \cite{bertoluzza2021algebraic} it is not difficult to check that $\sigma^*$ is a scalar product on $\Range{G}$ inducing a norm equivalent to the $\Lambda'$ norm, and that the corresponding projector $Q^*_\sigma$ satisfies
\[
\| Q^*_\sigma \phi \|_{\Lambda'} \lesssim \| \phi \|_{\Lambda'},
\]
the implicit constant in the inequality only depending on the coercivity and continuity constants implicit in \eqref{condsigma}.
\end{rem}

\NOTE{I have $\Lambda_0$ with 
	\[
	\inf_{z\in Z} \sup_{\lambda\in \Lambda_0} \frac{b(z,\lambda)}{\| \lambda \|_\Lambda
		\| z \|_V
	}  = \inf_{z} \sup_{\lambda\in \Lambda_0} \frac{\langle Gz,\lambda \rangle}{\| \lambda \|_\Lambda
		\| z \|_V
	}   \gtrsim 1
	\] 
	We have a basis for $\Lambda_0$ $\{\eta_i, i=1,\cdots,N\}$.  I have a dual basis $\{\zeta_i, 1=1,\ldots,N\}$ for $\tilde\Lambda_0\subseteq \Lambda_\delta'$, with \[
	G z = \sum_i b(z,\eta_i) \zeta_i, 
	\]
	and with
	\[
	\inf_{\lambda \in \Lambda_0}\sup_{\zeta \in\tilde \Lambda_0} \frac{\langle\zeta,\lambda\rangle}{\| \zeta\|_{\Lambda'}\| \lambda \|_{\Lambda}} \gtrsim 1.
	\]
	This implies that the projection onto $\Lambda_0$ orthogonally to $\tilde\Lambda_0$, as well as its adjoint are bounded
	\[
	\| \Pi_0 \lambda \|_\Lambda \lesssim \| \lambda \|_{\Lambda}, \qquad 	\| \Pi_0 \lambda \|_{\Lambda'}\lesssim \| \lambda \|_{\Lambda'},
	\]
	The two bases for $\Lambda_0$ and $\tilde \Lambda_0$ can be completed in two biorthogonal  bases for $\Lh$ $\{\eta_i, i=1,\cdots,M\}$, with $\{\eta_i, i=N+1,\cdots,M\}$ orthogonal to $\Lambda'_0$ and $\{\zeta_i, i=1,\cdots,M\}$, with $\{\zeta_i, i=N+1,\cdots,M\}$ orthogonal to $\Lambda_0$.
	Assume that I define a semi-scalar product $\sigma: \Lambda_0 \times \Lambda_0 \to \mathbb{R}$. Let $\sigma: \Lambda_\delta \times \Lambda_\delta$ be defined as
	\[
	\sigma(\lambda,\mu) = \sigma(\Pi_0 \lambda,\Pi_0 \mu), \qquad \Pi_0\lambda = \sum_{i=1}^N \langle \lambda ,\zeta_i \rangle \eta_i.
	\]
	Now I can define the dual semi-scalar product
	as 
	\[
	\sigma^*(\phi,\psi) = \sum_i \sum_j \Sigma^*_{ij} \langle \psi,\eta_i \rangle \langle \phi, \eta_j \rangle, \qquad \Sigma^* = \Sigma^{-1}, \quad \Sigma = (\sigma_{ij})_{ij} = (\sigma(\eta_i,\eta_j))_{ij}
	\]
	What I need for everything to work is that the projection obtained with this scalar product is bounded. I observe that letting $\Pi^*_0: \Lambda_\delta' \to \tilde\Lambda_0 < \zeta_i, i=1,\cdot,N >$ denote the adjoint of $\Pi_0$, I have 
	\[
	Q^*_\sigma \phi = Q^*_\sigma \Pi^*_0 \phi  
	\] 
	
	We know that it is bounded as follows
	\[
	\sup_{\lambda \in \Lambda_0} \frac{\langle Q_\sigma^* \phi, \lambda \rangle }{\| \lambda \|_{\Lambda}} \lesssim \| \Pi^*_0  \phi \|_{\Lambda'} \lesssim \| \phi \|_{\Lambda'}
	\]
	
}

We let $\Pi_\sigma : \Lh \to \Lh$ be defined as 
\begin{equation}\label{defPisigma} \Pi_\sigma = (\Id{\Lh'} - \Qsigma)^T = \Id{\Lh} - \QsigmaT. \end{equation}
 It is not difficult to check that $\Range{\Pi_\sigma}\subseteq \hLh$ and that $\lh \in \hLh$ implies that $\Pi_\sigma \lh = \lh$. Then $\Pi_\sigma$ is a projection onto $\hLh$.
Letting $\lambda$ be the solution to Problem \ref{feti_discrete}, we can see that $\lambda^0 = \lambda - \Pi_\sigma \lambda$ can be computed as
\[
\lambda^0 = \QsigmaT  G z^0, \quad \text{ with $z^0 \in Z$ solution of }\quad \sigma^*(G z^0, G z) = - \langle f, z \rangle,\ \forall z \in Z,
\]
\NOTE{In this note $\Sigma$ is an old notation, standing for the matrix corresponding to the semi-scalar product $\sigma^*$ on all $\Lh'$. 
$G^T \Sigma G: Z \to Z'$ invertible, by the coercivity of $\Sigma$ on $\Range\opG$. I know $G^T \lambda^0 = -f$. I can write
\begin{gather}
G^T \lambda^0 = -f \qquad \text{multiply by $\Sigma G (G^T \Sigma G)^{-1}$}\\
\Sigma G (G^T \Sigma G)^{-1} G^T \lambda^0 = - \Sigma G (G^T \Sigma G)^{-1} f, \qquad \text{write $f$ as $(G^T \Sigma G) z_f$}\\
\underbrace{\Sigma G (G^T \Sigma G)^{-1} G^T}_{\QsigmaT } \lambda^0 = - \underbrace{\Sigma G (G^T \Sigma G)^{-1}(G^T}_{\QsigmaT } \Sigma G) z_f, \qquad \text{isolate $\Qsigma$}\\
\QsigmaT  \lambda^0 = - \QsigmaT  \Sigma G z_f, \qquad \text{use $\QsigmaT  \lambda^0 = \lambda^0$,}
\end{gather}
}
and that $\hlambda = \Pi_\sigma \lambda = \lambda - \lambda^0$ is the solution to the following reduced problem
\begin{equation}\label{feti_reduced}
	s_h ( \hlambda , \hmu ) = \langle  g, \hmu \rangle - \langle  S_h \lambda^0,\hmu \rangle, \qquad \forall \hmu \in \hLh.
\end{equation}

We know by Lemma \ref{coerc_sh} that $s_h$ is coercive on $\hLh$, and, consequently, that the corresponding operator $\hSh: \hLh \to \hLh'$ is invertible. The reduced problem \eqref{feti_reduced} can be then addressed by any iterative solver well suited to solve linear systems with positive definite matrices. For instance, if, in addition, the operator $A^+_h$ is symmetric, which implies that $S_h$ and, consequently, $\hSh$ are symmetric, one can resort to the preconditioned conjugate gradient method.

\newcommand{\M}{M}
\newcommand{\m}{m}

\subsection{Construction of the preconditioner $\hM^{-1}$}
 A preconditioner $\hM^{-1}: \hLh' \to \hLh$ will be constructed by combining the mapping $\Embed$ with a suitably defined  preconditioning operator $\M^+: \Lh' \to \Lh$, corresponding to a bilinear form $\m^+: \Lh'\times \Lh' \to \mathbb{R}$.
 \label{sec:constructM} We will build $\hM^{-1}$ by building the corresponding bilinear form $\m^*: \hLh'\times \hLh' \to \mathbb{R}$ such that
\[
\langle \hM^{-1} \tphi,\tpsi \rangle = \m^*(\tphi,\tpsi), \qquad \tphi, \tpsi \in \hLh'.
\]
The bilinear form $\m^*$ will have the form 
\[
\m^*(\tphi,\tpsi) = \m^+(\Embed \tphi,\Embed \tpsi)
\]
where $\m^+: \Lh'\times\Lh' \to \mathbb{R}$, which is the bilinear form that we we will actually construct, corresponds to a linear operator $\M^+: \Lh'\to \Lh$ ``mimicking''
 some kind of inverse of $S_h$, if $S_h$. If $A^+_h$ and $\Bd$ were invertible, this would be
\begin{equation}\label{formalinversion}(B A^+_h B^T)^{-1} = B^{-T} (A^+_h)^{-1} B^{-1}.\end{equation}
Of course, none of the above mentioned operators is generally invertible; the idea is then to use the right-hand side expression in \eqref{formalinversion} as a guideline, and replace $B^{-1}$ and $(A_h^+)^{-1}$, respectively, with a suitable pseudo inverse $\Bd^+$, and with $A$ (or some spectrally equivalent $\widetilde A$).

Following the approach put forward by the FETI preconditioner, we rely on three ingredients.
The first ingredient is a finite dimensional space $\Vh \subseteq V$, to which the image of $\Bd^+$ will belong. We assume $\Vh$ to satisfy 
\begin{equation}\label{infsupprecond}
	Z \subseteq \Vh, \qquad \text{and}\qquad \inf_{\lambda \in \Lh} \sup_{v \in \Vh} \frac{b(v,\lambda)} {\| v \|_{V} \| \lambda \|_\Lambda} \gtrsim 1.
\end{equation}
Observe that, if a space $\Wh$ underlying the approximate pseudo inverse $A^+_h$  is available, a natural choice for $\Vh$ is $\Vh = \Wh$, but, in our framework, we are also interested in situations in which such a space might not be accessible. {Also note that $\Vh$ needs not satisfy any approximation assumption, so that, depending on the framework, its dimension can be sensibly smaller that the dimension of $V_h$. }
We let $\Ah : \Vh \to \Vh'$ and $\Bd : \Vh \to \Lh'$, be the discrete versions of the operators $A$  and $B$, defined as
\[
\langle \Ah v, w \rangle = a(v,w) \ \forall v,w \in \Vh, \qquad \langle \Bd v, \lambda \rangle = b(v,\lambda) \ \forall v \in \Vh, \lambda \in \Lh.
\]

The second ingredient is a discrete positive semi definite bilinear form $d: \Vh  \times \Vh  \to \mathbb{R}$ such that $d$ is coercive on $\ker \Bd$, and a corresponding splitting of the space $\Vh $ as
\[\Vh  = \ker \Bd \oplus (\ker \Bd)^\perp,\]
where
\begin{gather*}
	\ker \Bd = \{ v \in \Vh: \ b(v,\mu) = 0, \ \forall \mu \in \Lh\},\\
	(\ker \Bd)^\perp = \{ v \in \Vh: \ d(v,w) = 0, \ \forall w \in \ker \Bd\}.
\end{gather*}
We let $\Qd: \Vh  \to \ker \Bd$  denote the $d$-orthogonal projection onto $\ker \Bd$, and $\Pi_d : \Vh \to (\ker \Bd)^\perp$ be defined as $\Pi_d = \Id{\Vh} - \Qd$. We observe that for all $w \in \Vh$ we have that
\begin{equation}\label{dboundedness}
	d(\Qd w,\Qd w) \leq d(w,w), \qquad d(\Pi_d w,\Pi_d w) \leq d(w,w).
\end{equation}
Whichever the choice of the bilinear form $d$, the restriction of $\Bd$ to $(\ker \Bd)^\perp$ is injective and, thanks to the inf-sup condition \eqref{infsupprecond}, surjective. We then let $\Bd^+:  \Lh' \to (\ker \Bd)^\perp$ denote its inverse, which is a pseudo inverse of $\Bd$, satisfying
\begin{equation}\label{propBdplus}
	\Bd \Bd^+ = \Id{\Lh'}, \qquad \Bd^+ \Bd = \Pi_d.
\end{equation}
We easily realize that, given $\phi \in \Lh'$, the pseudo inverse $\Bd^+ \phi$ can be defined as $\Bd^+ \phi = v$, where $(v,\zeta) \in \Vh  \times \Lh$ is the uniquely defined solution to
\begin{equation}\label{charBplus}
	\begin{array}{lcl}
		d (v , w ) + b(w ,\zeta) &=& 0,\qquad \forall w \in \Vh\\
		b (v,\mu ) &=& \langle \phi, \mu \rangle,\qquad \forall \mu \in \Lh.
	\end{array}
\end{equation}

\newcommand{\snormta}[1]{| #1 |_{\ta}}

The third ingredient is a set of local preconditioners for $\Ah$: for all $k$ we let $\ta^k: \Vh^k \times \Vh^k \to \mathbb{R}$ denote a  coercive bilinear form satisfying, for some constants $K^* \geq 1 \geq \kappa_* > 0$, 
\begin{equation}\label{boundatilde}
	\kappa_* a^k(v,v) \leq \ta^k(v,v) \leq K^* a^k(v,v)\qquad \forall v \in \Vh^k.
\end{equation} 
We let  $\tAh^k$  denote the corresponding  linear operator, and 
we let $\ta:\Vh \times \Vh \to \mathbb{R}$ and $\tAh: \Vh \to \Vh'$ denote the related product bilinear form and corresponding linear operator. We let $| \cdot |_{\ta}$ denote the seminorm induced on $\Vh$ by the bilinear form $\ta$:
\begin{equation}\label{defnormatilda}\snormta{v} = \sqrt{\ta(v,v)}.\end{equation}

\

\newcommand{\tAd}{\widetilde A_\delta}

Following the guidelines provided by the formal inversion expression \eqref{formalinversion} we introduce the linear operator $\M^+: \Lambda_\delta' \to \Lambda_\delta$, defined as 
\[
\M^+ = (\Bd^+)^T \tAd \Bd^+,
\]
which corresponds to the bilinear form $\m^+ : \Lh'\times \Lh'$ defined as
\[
\m^+(\phi,\psi) = \ta(\Bd^+ \phi,\Bd^+ \psi).
\]

By combining the bilinear form $\m^+$ with the mapping $\Embed$ defined by \eqref{defK}, we finally build  our preconditioner $\hM^{-1}: \hLh' \to \hLh$ as the operator corresponding to the bilinear form  $\hm : \hLh' \times \hLh' \to \mathbb{R}$ defined as
\[
\hm(\tphi,\tpsi) = \m^+(\Embed \tphi,\Embed \tpsi).
\]

We have the following theorem, the proof of which is fundamentally the same as the proof of the analogous result for the FETI method in the domain decomposition framework (see \cite{TW05}). {For the sake of completeness we give the proof in an appendix.}

\begin{thm}\label{thm:preconditioning} Let 	$\Pi_d: \Vh \to \Vh$ denote the $\Vscal$-orthogonal projection operator onto  the $\Vscal$-orthogonal complement 
	$(\ker \Bd)^{\perp}$
	of $\ker \Bd$ in $\Vh$ and, for $v \in \Vh$ and $L:\Vh \to \Vh$ linear operator, let 
	\[\normta{v}^2 = \snormta{v}^2 + \kappa_* \| \bar v \|^2_{V}, \qquad 	\| L \|_{\ta\to\ta} = \sup_{w\in \Vh} \frac{\| L w \|_{\ta}}{\| w \|_{\ta}},\] $\bar v \in Z$ denoting the $V$-orthogonal projection of $v$.
	Then it holds
	\[
	{	\kappa(\hM^{-1} \hSh) \lesssim C(\ta,\Vh,d,\sigma) }
	\]
	with
	\[
	C(\ta,\Vh,d,\sigma) = \frac {K^*}{\kappa_*}\opNorm{\ta}{\ta}{\Pi_d}^2
	\left(1 + \opNorm{\ta}{\ta} {\Bd^+ \Qsigma  \Bd} \right)^2.
	\]
\end{thm}

\begin{rem}
	As, thanks to \eqref{boundatilde}, $\snormta{v} = 0$ if and only if $v \in Z$,  
	$\normta{\cdot}$ is indeed a norm on $\Vh$.
\end{rem}

%\COMMENT{
	%	Here we must say something about needing access to the operator $A_h$ with suitable properties. 
	%	
	%	
	%	We will need the operator $A_h: \Vh  \to \widetilde \Vh $ such that $A_h^+$ is the pseudo inverse of $A_h$ and $\ker A_h = \ker A = Z$.
	%	Can we define it giver only $A_h^+$? Do we need to make some extra assumption? 
	%	Do we need
	%	\[
	%	\langle A_h v_h, w_h \rangle = a( v_h, w_h), \qquad \forall v_h, w_h \in \Vh?
	%	\] 
	%	(I believe not)
	%	
	%	
	%	Do we assume that $A_h = (A_h^+)^+$? Or maybe we define $A_h$ as $(A_h^+)^+$, instead of the ? I think we need not.
	%}
%

\subsection{Efficiency of the preconditioner}\label{sec:efficiency}
While the operators $\Ah$, $\Bd$  and $\Gd$ are substantially  given by the continuous problem, there is, a priori, a certain freedom in the choice of the space $\Vh$ and of the of the bilinear forms $\sigma^*$ and $d$. These have to be chosen in such a way that the resulting preconditioner $\hM^{-1}$ is cheap (or easily parallelizable), and that, at the same time, the norms of $\Pi_d$ and  $ \Bd^+\QsigmaT\Bd$ stay bounded. How this can be done, and whether this can be done efficiently, depends on the properties of the actual problem considered. Giving a general abstract recipe simultaneously allowing for efficient parallel implementation and optimal or quasi optimal condition number bounds, by solely relying on the  assumptions made up to now is, in our opinion,  not feasible.
We can, nevertheless, guarantee efficiency of our abstract preconditioning strategy, provided the continuous multiplier space $\Lambda$ has some suitable additional localization property. More precisely, we make the following assumption. 
\begin{assmp}\label{localmult}
	There exist linearly independent subspaces $\Lm \subseteq \Lambda$ such that 
	\begin{enumerate}
		\item $\oplus \Lm$ is dense in $\Lambda$;
		\item for all $\ell$ there exist $\kplus$, $\kminus$ such for all $k \not \in \{\kplus,\kminus\}$,  $v \in V_k$ and $\lambda \in \Lm$ imply $b_k(v,\lambda) = 0$.\label{locm2}
	\end{enumerate}
\end{assmp}

Essentially, Assumption \ref{localmult} requires that, at the continuous level, (a dense subspace of) the multiplier space can be decomposed in subspaces each ``seeing'' at most two subproblems. Under such an assumption  it is always possible to choose $\Lh$ as $\Lh = \oplus \Lh^\ell$, $\Lh^\ell \subseteq \Lm$.  {We remark that such an assumption could be relaxed by allowing each $\Lambda^\ell$ to ``see'' at most $N$ subspaces $V^k$, but for the sake of notational simplicity we do not address such an option, which can however be tackled by a similar strategy. }

We let $\boundary{k} = \{ \ell: \ \kplus = k \ \text { or }, \kminus = k\}$ denote the indexes of all the subspaces $\Lambda^\ell$ that ``see'' the space $\Vh^k$, and $ \neighbors{\ell} = \boundary{\kplus} \cup \boundary{\kminus}$ denote the indexes of all the $\Lambda^{\ell'}$ that share with $\Lambda^{\ell}$ a neighboring space $V_h^k$.
For all $k$, we introduce subspaces
$V^k_0 = \ker \Bh^k$ and
\[
V^k_\ell = \{v \in \Vh^k, \ \ta^k(v,w) = 0, \ \forall w\in V^k_0\ \text{ and }\  b^k(v,\mu) = 0, \ \forall \mu \in \Lambda^i,\ \forall i\not= \ell\}.
\]
\NOTE{We have the splitting
	\[
	v = v_0 \oplus v_\ell.
	\] 
	We have, for $w \in V_0$
	\[
	\ta(v,w) = \ta(v_0,w).
	\]
	If $\ta^k$ coercive on $\ker B^k$, this implies
	\[
	\| v^0 \|^2_{V^k} \lesssim \ta(v^0,v^0) = \ta(v,v^0) \lesssim \| \ta \|_{V^k} \| v^0 \|_{V^k}
	\]
	which implies \(
	\| v^0 \|_{V^k} \lesssim \| v \|^2_{V^k}.
	\)
	If we test  $v$ with $w \in V_\ell^k$ we can write
	\[
	\ta(v,w_\ell) = \ta(\sum_\ell v_\ell,w_\ell) = \sum_\ell \ta(w_\ell,v_\ell). 
	\]
	There is no reason to think that $\ta(w_i,v_i) = 0$ if $i \not= \ell$. Then we do not have an a priori upper bound for the splitting only depending on the cardinality of the boundary.
}
We make the following assumptions on the space $\Vh$
\begin{assmp}\label{assmp:Vhsplit} For all $k$ the space $\Vh^k$ can be split as
	\[
	\Vh^k = V^k_0 \oplus_{\ell\in\boundary{k}} V^k_\ell
	\]
	and there exist constants $\cVsplit$ and $\CVsplit$ such that for all $k$, for all $v^k = v^k_0 \oplus_{\ell\in\boundary{k}} v^k_\ell$ it holds
	\[
	\cVsplit 	\normta{v^k}^2 \leq \normta {v^k_0}^2 + \sum_{\ell\in\boundary{k}} \normta {v^k_\ell}^2 \leq \CVsplit  \normta {v^k}^2.
	\]
\end{assmp}

%
%	\COMMENT{Before it was
	%	\[
	%	\cVsplit 	\| v^k \|^2_{V^k} \leq \| v^k_0 \|_{V^k}^2 + \sum_{\ell\in\boundary{k}} \| v^k_\ell \|_{V^k}^2 \leq \CVsplit  	\| v^k \|^2_{V^k}.
	%	\]
	%Basically, $\cVsplit$ and $\CVsplit$ and $\kappa_*$ and $K^*$ play an analogous role, depending on whether $\widetilde a$ is already broken or not.
	%}

\newcommand{\cbat}{\widetilde c}
\newcommand{\Cbat}{\widetilde  C}

%\COMMENT{Forse la seguente assumption non serve}
%
%\begin{assmp}\label{assmp:infsupprecond} For all $m$ the following inf-sup and continuity  conditiosn holds
% 	\[
% 	\inf_{\lambda\in \Lambda^\ell}\sup_{v\in W_\ell} \frac{b(v,\lambda)}{\| v \|_{\ta}  \| \lambda \|_{\Lambda}} \gtrsim \cbat >0, \qquad
% b(v,\lambda) \leq \Cbat \| v \|_{\ta} \| \lambda \|_{\Lambda}	\]
% 	where
% \[	W_\ell = V^\kplus_\ell \oplus V^\kminus_\ell.
% 	\]
% \end{assmp}

We remark that, by triangle inequality, $\cVsplit$ is bounded from below by a constant depending only on the cardinality of $\boundary{k}$, while the constant $\CVsplit$ depends on the properties of the norms, bilinear forms and spaces involved. In particular, in the framework of the FETI method for domain decomposition, $\CVsplit$ behaves like $\log(\delta/H)^2$, $\delta$ and $H$ denoting respectively the meshsize and the diameter of the subdomains. 
Under these assumptions we can define the scalar product  $d$ as
\begin{equation}\label{defd}
d(u,v) = \sum_k d^k(u^k,v^k), \qquad d^k(u^k,v^k) = \ta^k(u^k_0,v^k_0) + \sum_{\ell\in \boundary{k}} \ta^k(u^k_\ell,v^k_\ell).
\end{equation}
We claim that, for such a definition of $d$, the pseudo inverse $\Bh^+$ can be evaluated by solving, in parallel, a set of ``local'' problems.
Indeed, we have the following lemma.
\begin{lem}  Let $\phi \in \Lh'$ be given, and for all $\ell$ let $ (v_\ell^+,v_\ell^-) \in V^\kplus_\ell \times V^\kminus_\ell $, $\zeta_\ell \in \Lambda^\ell$ be the solution to 
\begin{gather}
	\ta^\kplus(v_\ell^+,w^+) - b^\kplus(w^+,\zeta_\ell) = 0, \qquad \forall w^+ \in V^\kplus_\ell, \label{local1}\\
	\ta^\kminus(v_\ell^-,w^-) - b^\kminus(w^-,\zeta_\ell) = 0, \qquad \forall v^- \in V^\kminus_\ell,\label{local2}\\
	b^\kplus(v_\ell^+,\mu) + b^\kminus(v^-_\ell,\mu) = \langle \phi,\mu \rangle, \qquad \forall \mu \in \Lm.\label{local3}
\end{gather}
Then, letting $v_\ell \in V_h$ be defined by 
\[
v_\ell^\kplus = v_\ell^+, \qquad v_\ell^\kminus = v_\ell^-, \qquad v_\ell^i = 0,\quad i \not\in \{\kplus,\kminus\}
\]
it holds that
\begin{equation}\label{def_B+efficient}
B^+ \phi = \sum_\ell v^\ell.
\end{equation}
\end{lem}

\begin{proof} We need to prove  that for $B^+$ defined by \eqref{def_B+efficient} and $d$ defined by \eqref{defd}, and for a suitable $\zeta \in \Lambda$ such that \eqref{charBplus} holds. Then, uniqueness of the solution of \eqref{charBplus} implies that \eqref{defd} holds.
We take $\zeta = \sum_\ell\zeta_\ell$  and we have
\begin{multline*}
	d(v,w) = d(\sum_\ell v_\ell,w) = \sum_\ell d(v_\ell,w) = \sum_\ell
	(\ta^\kplus(v^+_\ell,w^\kplus) + \ta^\kminus(v^-_\ell,w^\kminus) ) \\= 
	\sum_\ell (b^{\kplus}(w_\ell^\kplus,\zeta_\ell) + b^{\kminus}(w_\ell^\kminus,\zeta_\ell)) =
	\\=
	\sum_k \sum_{\ell\in\boundary{k}} b^k(w_\ell^k,\zeta_\ell) 
	= \sum_k \sum_{\ell\in\boundary{k}} b^k(w_\ell^k,\zeta) 
	\\= \sum_k  b^k( \sum_{\ell\in\boundary{k}} w_\ell^k,\zeta) + \sum_k b^k (w^k_0,\zeta)=
	\sum_k  b^k( w^k,\zeta)  = b(w,\zeta),
\end{multline*}
where we used \eqref{local1} and \eqref{local2}, and where we could add $b^k(w^k_0,\zeta)$ in the last line as $w^k_0 \in \ker 
B_\delta^k$.
That is we have
\[
d(v,w) - b(w,\zeta) = 0, \qquad \text{ for all } w \in V_h.
\]
Moreover, using \eqref{local3}, we have
\begin{multline}
	b(v,\mu) = b(\sum_\ell v_\ell,\mu) = \sum_\ell b(v_\ell,\mu) =
	\sum_\ell (b^{\kplus}(v^+_\ell,\mu) + b^{\kminus}(v^-_\ell,\mu)) \\=
	\sum_\ell (b^{\kplus}(v^+_\ell,\mu_\ell) + b^{\kminus}(v^-,\mu_\ell)) =
	\sum_\ell\langle \phi, \mu_\ell\rangle =
	\langle \phi, \sum_\ell \mu_\ell\rangle = \langle \phi,\mu \rangle.
\end{multline}
The couple $(v,\zeta)$ then coincides with the unique solution to \eqref{charBplus}, and then our claim that $\Bd^+ \phi = v$ is proven.
\end{proof}
%
%
%
%
%
% We now want to show that we can compute $v^\ell$ without actually building the spaces $V^k_\ell$. Indeed we claim that the solution to \eqref{local1}--\eqref{local3} coincides with the solution of the following problem: find $v_\ell \in V^{\kplus} \times V^{\kminus}$ such that
% 
% 
% 
%

\begin{rem}	This approach to the construction of the pseudo inverse $\Bh^+$ corresponds to what, in the classical domain decomposition framework, is referred to as ``deluxe''  scaling \cite{widlund2016bddc}.
\end{rem}

\begin{rem}\label{rem:glob4loc} We observe that, under some mild assumption, $B^+$ can be evaluated without actually explicitly constructing the subspaces $V^k_\ell$, whose definition can instead be directly embedded in the local problems.
Indeed, let $\widehat v_\ell = (\widehat v_\ell^+,\widehat v_\ell^-) \in \Vh^\kplus \oplus \Vh^\kminus$, $\xi_\ell \in \oplus_{i \in \neighbors{\ell}} \Lambda^{i}$ be the solution to the modified local problem
\begin{gather}
	\ta^\kplus(\widehat v_\ell^+,w^+) - b^\kplus(w^+,\xi_\ell) = 0, \qquad \forall w^+ \in \Vh^\kplus, \label{localb1}\\
	\ta^\kminus(\widehat v_\ell^-,w^-) - b^\kminus(w^-,\xi_\ell) = 0, \qquad \forall v^- \in \Vh^\kminus, \label{localb2}\\
	b^\kplus(\widehat v_\ell^+,\mu) + b^\kminus(\widehat v^-_\ell,\mu) = \langle \phi,\mu_\ell \rangle, \qquad \forall \mu  = \sum_\ell \mu_\ell \in  \mathop\oplus_{i \in \neighbors{\ell}} \Lambda^{i}.\label{localb3}
\end{gather}

\NOTE{ Problem \eqref{localb1}--\eqref{localb3} is written in strong form as 
	\[
\left(	\begin{array}{ccc}
		\tAh^{\kplus} & 0 & - (B^{\kplus})^T\\
0 & 	\tAh^{\kminus} & - (B^{\kminus})^T\\	
B^{\kplus} & B^{\kminus} & 0
\end{array}\right)
\left(
\begin{array}{c}
	v_\ell^+\\
	v_\ell^-\\
	\xi_\ell
\end{array}
\right) = \left(
\begin{array}{c}
	0\\
	0\\
	\phi
\end{array}
\right) 
	\]
	
}

Assuming such a problem is well posed, which requires the validity of a slightly stronger inf-sup condition than the one in Assumption \ref{infsupprecond}, we have that $\widehat v_\ell^+ = v_\ell^+$ and $\widehat v_\ell^- = v_\ell^-$. Indeed, we easily see that $\widehat v_\ell^+ \in V^\kplus_\ell $ and $\widehat v_\ell^- \in V^\kminus_\ell $. In fact $\widehat v^+$ is orthogonal to $V^\kplus_\ell$ for $\ell = 0$ (by \eqref{localb1}) and for $\ell \in \boundary{k_\ell^+}$ (by equation \eqref{localb3}), and analogously for $\widehat v_\ell^-$. Then, if we test \eqref{localb1} (resp. \eqref{localb2}) with $w^+ \in V^\kplus_\ell $ (resp. $w^- \in V^\kminus_\ell $), the contribution of $\xi_\ell \in \Lambda_\ell$, $\ell \not = m$ vanishes, and we get the same equation.
\end{rem}

To assess the efficiency of the preconditioner it remains to estimate the condition number of the preconditioned matrix, which, thanks to Theorem \ref{thm:preconditioning}, reduces to bounding the norms of the projector  $\Pi_d$ and of the operator $\Bh^+ \Qsigma \Bh$.
This is the object of the following lemma.

\begin{lem}\label{lem:boundsigma} Under assumptions \ref{localmult}, it holds that
\begin{equation}\label{boundBQB}
\opNorm{\ta}{\ta}{\Pi_d} \lesssim \sqrt{\frac {C^\flat} {\cVsplit}}, \qquad
\opNorm{\ta}{\ta}  {\Bh^+ \Qsigma \Bh}  \lesssim   
\sqrt{\frac {C^\flat} {\cVsplit}}.
\end{equation}
\end{lem}

\begin{proof} 
In view of the definition of $d$ and of $\Pi_d = 1-\Qd$, by \eqref{dboundedness}  we have, for all $w \in \Vh$,
\[
\normta{\Qd v}^2 \lesssim \snormta{\Qd v}^2 \lesssim \frac 1 {\cVsplit} d(\Qd v,\Qd v) \leq \frac 1 {\cVsplit} d(v,v) \lesssim \frac{\CVsplit}{\cVsplit} \snormta{v}^2,
\]
whence
\[
\normta{\Pi_d v}^2 \lesssim \normta v^2+ \normta{\Qd v}^2 \lesssim \frac{\CVsplit}{\cVsplit}\normta v^2.\] 
Thanks to the coercivity of $\sigma^*$ on $\Range \Gd$, we easily see that $\| \Qsigma \|_{\Lambda'\to \Lambda'} \lesssim 1$,  the implicit constant in the inequality depending on the implicit constant in the coercivity and continuity bounds \eqref{boundsigmastar}. Now we observe that, for $z\in Z$, thanks to the definition \eqref{defnormatilda}, we have that
\[ \| z \|_{\ta} =
\sqrt{\kappa_*} \| z \|_{V} \lesssim \sqrt{\kappa_*} \sup_{\mu \in \Lambda^0} \frac{ b (z,\mu) } {\| \mu \|_{\lambda}} =\sqrt{\kappa_*} \sup_{\mu \in \Lambda^0} \frac{ \langle G z,\mu \rangle } {\| \mu \|_{\Lambda}} \lesssim \sqrt{\kappa_*} \| G z \|_{\Lambda'}.
\]
Then, with $z^* \in Z$ such that $G z^* = \Qsigma \Bh w$ we can write
\begin{multline*}
\| \Bh^+ \Qsigma \Bh w \|^2_{\ta} =  \| \Bh^+ \Bh z^* \|^2_{\ta} =  \| \Pi_d z^* \|^2_{\ta} \lesssim 
{\frac {C^\flat} {\cVsplit}}  \|  G z^* \|^2_{\Lambda_\ell'} \\ \lesssim  { \frac {C^\flat} {\cVsplit}} \kappa_* \| \Qsigma \Bh w \|_{\Lambda'} \lesssim { \frac {C^\flat} {\cVsplit}} \kappa_* \| w \|_V \lesssim 
\frac {C^\flat} {\cVsplit} \normta{w}
\end{multline*}
which gives us \eqref{boundBQB}.
\end{proof}

Combining Theorem \ref{thm:preconditioning} with Lemma \ref{lem:boundsigma} we obtain the following corollary.
\begin{cor}\label{cor:4.10} Under the assumptions of Theorem \ref{thm:preconditioning} and Lemma  \ref{lem:boundsigma} we have that
\begin{equation}\label{finalcondbound}	\kappa(\hM^{-1} \hSh) \lesssim \frac {K^*} {\kappa_*}{\frac {C^\flat} {\cVsplit}}.
\end{equation}
\end{cor}

\begin{rem}
Depending on the choice of $\widetilde a$ the expression at the right hand side of \eqref{finalcondbound} might simplify. If we choose $\widetilde a = a$, we have that $K^* = \kappa_* = 1$. If, instead, we choose $\widetilde a$ with a suitable block diagonal structure, we will have that $\CVsplit = \cVsplit = 1$. 
\end{rem}

\subsection{The solution recipe}
Let us conclude this section a recipe summarizing the building blocks needed in the solution process, and how these can be assembled starting from a number of elementary operators and spaces, some of which depend on the continuous / discrete problems \eqref{schur} and \eqref{feti_discrete}, and some of which have to be chosen case by case. We focus on the framework considered in Section \ref{sec:efficiency}.
he following spaces and operators must be chosen by the user, for the construction of the preconditioner
\begin{enumerate}[(A)]
	\item The bilinear  form $\sigma: \Lambda^0 \times \Lambda^0 \to \mathbb{R}$, satisfying \eqref{boundsigmastar}.
	\item The auxiliary space $\Vh = \prod_k \Vh^k$ satisfying \eqref{infsupprecond}.
	\item The bilinear forms $\ta^k: \Vh^k \times \Vh^k \to \mathbb{R}$ satisfying \eqref{boundatilde}.
\end{enumerate}

\newcommand{\Em}{I^\Lambda_\ell}
\newcommand{\Rm}{R^\lambda_\ell}

The action of the following linear operators on function of the respective domain space must be implemented.
\begin{enumerate}[(a)]
	\item The linear operator $S_h = B A_h^+ B^T: \Lh \to \Lh'$. This will involve a call to the black-box approximate pseudo-solver $A^+_h$.
	\item The linear operator $\Sigma^*: \Lh' \to \Lh$ corresponding to the bilinear form $\sigma^*$. This can be constructed according to Remark \ref{rem:buildsigmastar} starting from $\sigma$.
	\item The natural injection operators $I^V_k: \Vh^k \to \Vh$ and their transpose, the natural restriction operators $R^V_k: \Vh' \to (\Vh^k)'$. 
		\item The linear operators $\tAh^k: \Vh^k \to (\Vh^k)'$
		\item The linear operators $\Bd^k: \Vh^k \to \Lh'$ and their transpose $(\Bd^k)^T: \Lh \to (\Vh^k)'$.
	
	\item The natural injection operators $\Em: \Lh^\ell \to \Lh$ and their transpose, the natural restriction operators $\Rm: \Lh' \to (\Lh^\ell)'$
	\item The linear operator $G: Z \to \Lh'$ and its transpose $G^T : \Lh \to Z'$

	\item The local solvers
	$D_\ell^{-1}: (\Lh^\ell)' \to \Vh^{\kplus} \times \Vh^{\kminus}$ 
	 returning the components $[v_\ell^+, v_\ell^-]^T$ of the solution to the local saddle point problem \eqref{localb1}--\eqref{localb3}.
	\end{enumerate}

\NOTE{
\[
B^+ \phi = \sum_\ell v_\ell = \sum_\ell  = \sum_\ell [I_{\kplus}^V\ I_\kminus^V ] D_\ell^{-1} \phi\]

\begin{multline*}
m^+(\phi,\psi) = \sum_k \ta^k (B^+ \phi, B^+ \psi) = \sum_k \ta^k (\sum_\ell \phi_\ell, \sum_{m'} \psi_{m'}) = 
\sum_\ell \sum_{m'} \sum_k \ta^k(B^+ \phi,B^+ \psi)
\\
\sum_\ell \sum_{m'} \sum_k \ta^k(
\sum_{k\in\mathcal{N}(m)} I^V_k D_\ell^{-1} \phi,\sum_{k'\in\mathcal{N}(m')} I^V_k D_{m'}^{-1} \phi
B^+ \psi
) = \sum_k \sum_{m,m'\in\partial k} \ta^k (I^V_k D_\ell^{-1}\phi,I_k^V D_{m'}^{-1}) \\= 
\sum_k \sum_{m,m' \in \partial(k)} \langle D_{m'}^{-T} R_k^V \tAh^k I_k^V D_\ell^{-1} \phi, \psi \rangle
\end{multline*}
}

Starting from these operators we can evaluate the different other operators involved in the solution process. 
	\newcommand{\kplusp}{k^+_{\ell'}}
\newcommand{\kminusp}{k^-_{\ell'}}

\subsubsection*{The projection operator $\Qsigma$} The projection operator $\Qsigma: \Lh'\to \Lh'$ takes the form
	\[
	\Qsigma =   \Gd (\Gd^T \Sigma^* \Gd)^{-1} \Gd^T \Sigma^* .
	\]
\subsubsection*{The mapping $\Embed$} The mapping $\Embed: \Lh' \to \Lh'$, selecting, for each element $\phi$ of $\Lh'$, the canonical representative of the equivalence class $\tilde \phi = [\phi] \in \hLh$ (see Remark \ref{rem:Embed}), and the projector $\Pi_\sigma: \Lh \to \Lh$ by which the problem is reduced to $\hLh$, have the form 
	\[
	\Embed = (\Id{\Lh'} - \Qsigma), \qquad 	\Pi_\sigma = \Id{\Lh} - (\Sigma^*)^T \Gd (\Gd^T \Sigma^* \Gd)^{-T} \Gd^T.
	\] 
\subsubsection*{The pseudo-inverse $\Bd^+$} The pseudo-inverse $\Bd^+: \Lh'\to \Vh$ is then evaluated as
	\[
	\Bd^+ = \sum_\ell \left(
\begin{array}{cc}
	I^V_{\kplus} & I^V_{\kminus}
\end{array}\right) D^{-1}_\ell R^\Lambda_\ell
	\]
\subsubsection*{The preconditioner} The preconditioner $M^+: \Lh'\to \Lh$ has then the form
	\[
	M^+ = (\Bd^+)^T \tAh \Bd^+ = \sum_\ell  \sum_{\ell' \in \neighbors{\ell}}   I^\Lambda_{\ell'} D^{-T}_{\ell'} \underbrace{\left(
	\begin{array}{c}
		R^V_{\kplusp} \\ R^V_{\kminusp}
	\end{array}\right) \tAh \left(
\begin{array}{cc}
I^V_{\kplus} & I^V_{\kminus}
\end{array}\right) }_{I}D^{-1}_\ell R^\Lambda_\ell
	\]
	Remark that for $k \not=k'$ we have that $R^V_{k'} \tAh I^V_k = 0$, while $R^V_{k} \tAh I^V_k = \tAh^k$, which eventually simplifies $I$ for  $\ell\not=\ell'$.
The operator $M^+$ is then combined with $\Embed$ to form the preconditioner 
$\hM^{-1} = \Embed^T M^+ \Embed$.

\section{Examples}

 %!TEX root = coupling_v7.tex

The above framework has multiple possible applications, and it can be used for the design of coupling approaches in multiphysics, multiscale or mixed-dimensional problems. The coupling of FEM and BEM \cite{BF19}, the Arlequin method (\cite{BD98}), the surface/bulk coupling arising in {proton transport across biological membranes} \cite{GeMeSt02} or in the study of the electrostatic potential in one-layer material semiconductor devices \cite{jourdana2023interface}, the flux in branching fracture networks \cite{PED10, berrone2024stabilized}, can all be cast within our abstract framework, which allows to use previously implemented and optimized solvers of possibly heterogeneous nature for the solution of the individual problems. Of course, the most obvious application is the one to domain decomposition, from which we openly drew inspiration in designing our coupling approach. In the following sections we will show how casting two known domain decomposition approaches in the abstract framework allows first and foremost to design algorithms aimed at coupling black-box subdomain solvers, but also to design new variants of some known preconditioner.

%!TEX root = coupling_v8.tex

\newcommand{\kk}{\kappa_k}
\newcommand{\kkplus}{\kappa_{k_\ell^+}}
\newcommand{\kkminus}{\kappa_{k_\ell^-}}

\subsection{Domain decomposition with Neumann  coupling}\label{dd}
Let $\Omega \subset \Real^n$, $n=2,3$ be a polygonal/polyhedral domain with boundary $\partial\Omega$.
We consider the following simple model problem:  given $f\in
L^2(\Omega)$,  find $u$ satisfying
\begin{equation}
	\label{eq:poisson}
	- \nabla \cdot (\kappa \grad u) = f, \mbox{~in~}\Omega, \qquad u = 0 \mbox{~on~}\partial\Omega.
\end{equation}
We let $\Omega$ be split as the non-overlapping union of polygonal/polyhedral subdomains: $\overline{\Omega} =\cup_{k=1}^K\overline\Omega^k$ and set $\Gamma^k = \partial \Omega^k$, $\Gamma = \cup \Gamma^k$ denoting the skeleton of the decomposition. We assume that the subdomain $\Omega^k$ are chosen in such a way that 
$\kappa\vert_{\Omega^k} =  \kk$, $\kk \in \mathbb{R}$ positive constants, possibly very different from each other. 
 For each $k$ we let 
\[V_k = \{ u \in H^1(\Omega^k), ~ u|_{\partial\Omega \cap \Gamma^k} = 0\}, \qquad \| u \|_{V^k} = \kk^{1/2} \| u \|_{1,\Omega^k}.\]

Letting $\nu^k$ denote the outer unit normal to the subdomain $\Omega^k$,
on the skeleton $\Gamma$ we next select a normal direction $\nu$, taking care that on $\partial \Omega$ $\nu$ coincides with the outer normal to $\Omega$.  We let $\Lambda$ denote the closure of $L^2(\Gamma)$ with respect to the norm
\[
\| \lambda \|_{-1/2,\Gamma}^2 = \sum_k \kk^{-1} \| (\nu\cdot \nu^k) \lambda \|_{-1/2,\Gamma_k}^2, \qquad \| \lambda^k \|_{-1/2,\Gamma_k} = \sup_{v \in V_k} \frac{\int_{\Gamma_k} \lambda^k v}{\| v \|_{1,\Omega^k}}.
\]
The space $\Lambda$ is well suited to describe the trace on $\Gamma$ of the normal flux $\lambda = \kappa \nabla u \cdot \nu$  of the solution to our problem (we recall that while $\kappa$ and $\nabla u$ may jump across $\Gamma$, $\kappa \nabla u \cdot \nu$ is continuous, so that $\lambda$ is well defined). Equation \eqref{eq:poisson} can be split as a system of coupled local problems as: find $(u^k)_k \in V = \prod_k V_k$, $\lambda \in \Lambda$, such that 
\begin{gather}\label{eq:neumann1}
	\int_{\Omega^k} \kk \nabla u^k \cdot \nabla v^k  - \int_{\Gamma^k} (\nu \cdot \nu^k) \lambda v^k = \int_{\Omega^k} f v^k, \ \text{ for all } v^k \in V_k,\  k \in \{1,\hdots,K\},\\ 
	\sum_k \int_{\Gamma^k} u^k (\nu \cdot \nu^k) \mu = 0,\ \text{ for all } \mu \in \Lambda.\label{eq:neumann2}
\end{gather}
This is the problem from which the abstract framework took inspiration, and consequently it clearly falls within its purview. 

\

\newcommand{\order}{l}

We can now approximate $\Lambda$ with discontinuous piecewise  polynomials of order $\order$ on a shape regular 
mesh $\TG$.  In  the subdomains we consider arbitrary discretizations  yielding approximate solvers $A_{h,k}^+$, which we will treat as black boxes, only assuming that Assumption \ref{consist_weak} holds. These discretizations need neither be of the finite element family, nor they need  be of the same type in all subdomains. We could consider meshless methods or even linear neural network discretizations, such as extreme learning machines, in some subdomains and finite elements in other subdomains.  Lemma \ref{coerc_sh} implies that, provided $h/\delta < C_\Lambda$, Problem \ref{feti_discrete} is well posed and it can be solved by the approach proposed in Section \ref{sec:feti}. Efficiency will be attained by constructing a preconditioner according  the recipe presented therein. 

\begin{rem}
	We could also use continuous polynomials, only allowing discontinuities at the cross points (in two dimensions) or on the wirebasket (in three dimensions), where we also modify the space $\Lh$ in the spirit of the mortar method \cite{bernardi1993domain,belgacem1999mortar}.
\end{rem}

We build the bilinear form $\sigma^*$ according to the strategy suggested in Remark \ref{rem:buildsigmastar}. The space $\Lambda_0$ can be chosen as the space of discontinuous functions assuming constant values on the edges (in 2D) or faces (in 3D) of the subdomains. A bilinear form $\sigma$ satisfying \eqref{condsigma} can be defined as follows:
\begin{equation}\label{eq:Neu_sigma}
\sigma(\lambda,\mu) = \sum_k \kk^{-1} \left(H_ k^{-n} \left(\int_{\partial\Omega^k} (\nu\cdot\nu^k) \lambda\right)  \left(\int_{\partial\Omega^k} (\nu\cdot\nu^k)  \mu \right) + H \int_{\partial\Omega^k} \lambda \mu \right).
\end{equation}
The following lemma, where the implicit constants depend on the shape regularity of the subdomains, but not on their number or size, can be proven by exploiting standard inverse inequalities and \cite[Lemma 2.1]{bertoluzza2021polygonal}.
\begin{lem}
	For all $\lambda \in \Lambda_0$ it holds that
	\[
	\sigma(\lambda,\lambda) \simeq \| \lambda \|_\Lambda^2.
	\]
\end{lem}
\NOTE{
Proof of the lemma. First of all, by Lemma 2.1 of the M2AN paper we have that, since
\[
\| v \|_{1,\Omega^k} \simeq | \Omega^k |^{1/2} |  \fint_{\Omega^k} v | + | v |_{1,\Omega} \simeq H_k^{d/2}  |  \fint_{\Omega^k} v | + | v |_{1,\Omega}
\]
then
\[
\| F \|_{-1,\Omega^k}  = \sup_{v\in H^{1}(\Omega^k)}  \frac{\langle F, v \rangle}{\| v \|_{1,\Omega^k}} \simeq H_k^{d/2} | \langle F , 1 \rangle | + | F |_{-1,\Omega^k}.
\]
We set $F = \gamma_{\partial\Omega^k}^* (\nu\cdot \nu^k) \lambda$ with $\lambda \in \Lambda_0$. We have
\[
H_k^{d/2} | \langle F , 1 \rangle | + | F |_{-1,\Omega^k} = H_k^{-d/2} | \int_{\partial\Omega^k} (\nu\cdot \nu^k) \lambda | + \sup_{{\phi\in H^1(\Omega^k)}\atop{\int_{\Omega^k} \phi = 0}}   \frac{\int_{\partial\Omega^k}(\nu\cdot \nu^k) \lambda \phi}{| \phi |_{1,\Omega^k}}.
\]
Now we can write
\begin{multline*}
\int_{\partial\Omega^k} (\nu\cdot \nu^k) \lambda \phi = \int_{\partial\Omega^k} (\nu\cdot \nu^k) \lambda (\phi - \fint_{\Omega^k} \phi) \leq H^{1/2} \| \lambda \|_{0,\partial\Omega} H^{-1/2} \| \phi - \fint_{\Omega^k} \phi \|_{0,\partial\Omega^k}\\
\lesssim  H^{1/2} \| \lambda \|_{0,\partial\Omega} \left(H^{-1} \| \phi - \fint_{\Omega^k} \phi \|_{0,\Omega^k} + |\phi |_{1,\Omega^k}
 \right)
 \lesssim H^{1/2} \| \lambda \|_{0,\partial\Omega} |\phi |_{1,\Omega^k},
\end{multline*}
From which we obtain the upper bound. We obtain the lower bound by a scaling argument. 
}

 The first step in constructing the preconditioner  is choosing the auxiliary spaces  $V_\delta^k$ and the bilinear forms $\widetilde a^k$. The most natural choice is to introduce the mesh $\El$ on $\partial \Omega^k$  induced by the mesh $\TG$, and a mesh $\Thl$ on $\Omega^k$, whose trace on $\Gl$ coincides with $\El$. We then choose $V_\delta^k$ as the set of order $\order +n$ finite elements on $\Thl$, and we can set $\widetilde a^k = a^k$. It is not difficult to realize that this choice leads to what is essentially a FETI preconditioner with ``deluxe'' scaling, the main difference being the fact that, as in a black box framework, the spaces underlying the local black box solvers $A_{h,k}^+$ might not be available, rather than relying on such spaces to construct the preconditioners, we rely on the spaces $V_\delta^k$. Remark that the evaluation of the scalar product $d^k$ defined by \eqref{defd} requires solving local problems in the space $V_0^k$ of ``interior'' functions, for the construction of the spaces $V^k_\ell $. Remark also that the mesh $\Thl$ need not be quasi uniform and it can be taken much coarser in the interior of $\O^k$, thus making the preconditioner cheaper.

\newcommand{\Ehl}{\mathcal{E}_\delta^k}

We can also consider an alternative, inspired by a combination of the theories of Virtual Elements and of substructuring preconditioners \emph  {\`a la} Bramble Pasciak Schatz.  Depending on whether we are in two or three dimensions, we let $V_\delta^k$ be defined as follows:
\begin{enumerate}[\alph{enumi})]
	\item for $d = 2$ we set 
	\begin{equation}\label{eq:2DV_Neu}
	V_{\delta}^k = \{ v \in H^1(\O):  v|_e \in \Poly{\order+2}(e)\, \forall e \in \El, \Delta v = 0 \ \text{in }\Omega^k  \};
	\end{equation}
	\item for $d = 3$, letting $\Ehl$ denote the set of edges of the boundary mesh $\TG$, we set 
	\begin{multline}\label{eq:3DV_Neu}
	V_\delta^k = \{
	v \in H^1(\O): v|_e \in \Poly{1}(e)\ \forall e \in \Ehl, \\ v|_f \in \Poly{\order+3}(f) \ \forall f \in \El, \ \Delta v = 0  \ \text{in }\Omega^k
	\}.
	\end{multline}
\end{enumerate}

We now let $\widetilde a^k$ be defined as
\begin{equation}\label{eq:Neu_tildea}
\widetilde a^k (v,w) = \kk \sum_{\ell \in \partial(k)} \left( \widetilde a^k_\ell (v,w) +  \left(\fint_{\Gamma^k_\ell } v - \fint_{\Gamma^k} v \right) 
\left(\fint_{\Gamma^k_\ell } w - \fint_{\Gamma^k} w \right)\right),
\end{equation}
where $\Gamma^k_\ell $, $\ell \in \partial (k)$ are the edges (in 2D) or faces (in 3D) of the subdomain $\O^k$, and where
\begin{equation}\label{conditionBPS}
\widetilde a^k_\ell (v,v) \simeq | v |^2_{1/2,\Gamma^k_\ell }.
\end{equation}
{ The construction of bilinear forms $\widetilde a^k_\ell $ satisfying \eqref{conditionBPS} is out of the scope of this paper and we refer to \cite{chernov2019harmonic, bramble1986construction} for different options stemming from the theory of virtual elements and of substructuring preconditioners.}
We have the following Lemma.
\begin{lem}\label{lem:5.2} For all $v \in V_\delta^k$ it holds that
	\[  \log (H/\delta)^{-2}\kk | v |_{1,\Omega^k}^2 \lesssim \widetilde a^k(v,v) \lesssim \kk | v |_{1,\Omega^k}^2.
\]
	\end{lem}
This has been proven in two dimensions in \cite{BMP-Neumann}, but the proof there holds unchanged also in three dimensions. In order to define the pseudo inverse $B_\delta^+$ we need to identify the subspaces $V^k_\ell $, $\ell \in \partial(k)$. To this aim
	let $\widetilde V^k_\ell  = \{ v \in V^k_\delta: v = 0 \ \text { on } \Gamma^k \setminus \Gamma^k_\ell \}$ and let $\pi_\ell : V^k \to \widetilde V^k_\ell $ be defined as the first component of the couple $(\pi_\ell \phi,\lambda) \in \widetilde V^k_\ell  \times \Lambda^m$ such that for all $(\psi,\mu) \in \widetilde V^k_\ell  \times \Lambda^m$
\[
\kk	\widetilde a^k_\ell (\pi_\ell  v,\psi) - \int_{\Gamma^k_\ell } \lambda \psi =0, \qquad \int_{\Gamma^k_\ell } \pi_\ell  \phi \mu = \int_{\Gamma^k_\ell } \phi \mu. 
\]
As we have that 
\[
\inf_{\lambda \in \Lambda^m} \sup_{\phi \in \widetilde V^k_\ell } \frac{ \int_{\Gamma^k_\ell } \lambda \phi }{\| \lambda \|_{-1/2,\Gamma^k_\ell } \| \phi \|_{H^{1/2}_{00}(\Gamma^k_\ell )}} \gtrsim 1,
\]
$\pi_\ell $ is well defined. The following lemma holds, where we let $\Im(X)$ denote the image of the operator $X$. 
\begin{lem}
	It holds that $V^k_\ell  = \Im(\pi_\ell )$.
	\end{lem}
\begin{proof} Let $\phi = \pi_\ell  \phi \in \Im(\pi_\ell )$, and let $\psi \in V^k_0 = \ker \Bd^k$ and $\mu \in \Lambda^\ell$, $\ell \not = m$. We have 
	\[
	\ta(\phi,\psi) =\kappa_k^{-1}  \int_{\G_\ell ^k}  \lambda(\phi) \psi = 0, \qquad b(\phi,\psi) = b(\pi_\ell  \phi,\psi) = 0, 
	\]
	which implies that $\Im(\pi_\ell )\subseteq V^k_\ell $. Let us now prove the converse inequality. Let $\phi \in V_\ell ^k$ and assume that $\pi_\ell  \phi \not=\phi$. Then $\psi = \phi - \pi_\ell  \phi \in V_\ell ^k$. We have
\[
\ta(\psi,\xi) = 0, \quad \forall \xi\in V^k_0, \qquad \text{ and } b(\psi,\mu) = 0, \quad \forall \mu \in \Lh^\ell, \ \ell \not = m.
\]
Together with $b(\psi,\mu) = 0$ for all $\mu \in \Lh^m$ this implies $\psi = 0$, and therefore $\phi = \pi_\ell  \phi$, which gives a contradiction.
	
%	
%we easily see that $\phi - \pi_\ell  \phi \in \ker B^k_\delta$. We also easily see that, for all $\phi \in \ker B^k_\delta$ we have that $\pi_\ell  \phi = 0$ and then $\phi-\pi_\ell  \phi$ is onto. Finally, we see that for $\psi \in \ker B^k_\delta$ and $\phi = \pi_\ell  w \in \Im\pi_\ell $ it holds that
%	\[
%	\widetilde a(\phi,\psi) = \widetilde a(\pi_\ell  w, \psi) - \int_{\Gamma^k_\ell } \lambda(v) \psi= 0,
%	\]
%where we could add the integral over $\Gamma^k_\ell $ in the middle equation as $\psi \in \ker B^k_\delta$.	Then $\Im \pi_\ell $ is $\widetilde a$ orthogonal to $V^k_0$. The $\widetilde a$ orthogonality with $\Im \pi_\ell$, $\ell \not = m$ stems from the block diagonal structure of the bilinear form $\widetilde a$ and from the fact that the supports of $\widetilde V^k_\ell $ and $\widetilde V^k_\ell$ are disjoint.   	
	\end{proof}

We can then define the pseudo inverse $\Bh^+$ as the solution to \eqref{local1}--\eqref{local3}, or, by the reasonong in Remark \ref{rem:glob4loc}, by solving for each $\ell$ the local coupled system \eqref{localb1}--\eqref{localb3} that takes here the form: find $v_\ell ^+ \in \widetilde V^{\kplus}_\ell $, $v_\ell ^- \in \widetilde V^{\kminus}_\ell $ such that 
 \begin{gather}
\kkplus	\ta^\kplus_\ell (\widehat v_\ell ^+,w^+) - \int_{\Gamma^\kplus_\ell } w^+ \xi_\ell  = 0, \qquad \forall w^+ \in \widetilde V^\kplus_\ell , \label{localc1}\\
\kkminus	\ta^\kminus_\ell (\widehat v_\ell ^-,w^-) + \int_{\Gamma^\kminus_\ell } w^- \xi_\ell   = 0, \qquad \forall v^- \in\widetilde V^\kminus_\ell , \label{localc2}\\
\int_{\Gamma_\ell } v^+ \mu -
\int_{\Gamma_\ell } v^- \mu
= \langle \phi,\mu \rangle, \qquad \forall \mu  \in  \Lambda_\ell .\label{localc3}
\end{gather}
It is not difficult to ascertain that the solution of \eqref{localc1}--\eqref{localc3} coincides with the solution to \eqref{local1}--\eqref{local3}. We can then solve the former system instead of the latter one, so than we do not need to explicitly compute the spaces $V^\kplus_\ell $ and $V^\kminus_\ell $. 

Thanks to the way we defined $\ta$ we immediately see that Assumption \ref{assmp:Vhsplit} holds with
\[
\cVsplit \simeq \CVsplit \simeq 1.
\] 
The scalar product $d$ is then defined by \eqref{defd}. 
Moreover, by Lemma \ref{lem:5.2} we have that \eqref{boundatilde} holds with 
\[\kappa_* \simeq \frac 1 {|\log(H/\delta)|^2}, \qquad K^* \simeq 1.\] 
Following the steps of section \ref{sec:constructM},
it follows that the choices above with $\sigma$ defined in \eqref{eq:Neu_sigma}, $\widetilde a^k$ defined by \eqref{eq:Neu_tildea} and  $V^k_\delta$ defined by \eqref{eq:2DV_Neu}, or \eqref{eq:3DV_Neu}, leads to a preconditoner $\widehat M$ for which Corollary \ref{cor:4.10} gives the bound
\[	\kappa(\hM^{-1} \hSh) \lesssim |\log(H/\delta)|^2.
\]

We point out that this preconditioner is, to our knowledge, new. It can be regarded as a dual version of the substructuring preconditioner of \cite{bramble1986construction}.

\NOTE{
Boundedness of $\pi_\ell $. We have
\[
| \pi_\ell  v |_{1,\Omega^k} \lesssim \| \pi_\ell  v \|_{H^{1/2}_{00}(\Gamma^k_\ell )} \lesssim  h^{-\varepsilon} | \pi_\ell  v |_{H^{1/2-\varepsilon}_0(\Gamma^k_\ell )} 
\lesssim  | v |_{H^{1/2-\varepsilon}_0(\Gamma^k_\ell )} 
\lesssim \frac{h^{-\varepsilon}}{\varepsilon} | v |_{1/2,\Gamma^k_\ell }
\]
whence
\[
| \pi_\ell  v |_{1,\Omega^k} \lesssim \log(H/\delta) | v |_{1/2,\Gamma^k_\ell }.
\]
Then
\[
\| v \|^2_{1,\Omega^k} \lesssim \| v - \sum_\ell  \pi_\ell  v \|^2_{1,\Omega^2} + \sum_{m} \| \pi_\ell  v \|^2 \lesssim \log(H/\delta)^2 \| v \|^2_{1,\Omega^k}\]
whence
\[
c_\flat = 1, \qquad C^\flat = \log(H/\delta)^2
\]
}

\subsection{Domain decomposition with Dirichlet coupling}
A second possibility for splitting Problem \ref{eq:poisson} into coupled local problems is to introduce, as an independent variable acting as the multiplier, the trace  of the solution on the skeleton $\Gamma$ of the decomposition. The coupled problem then reads: for all $k \in \{1,\hdots,K\}$ find $(u^k,\zeta^k)_k \in H^1(\O^k) \times H^{-1/2}(\G^k)$ and $\lambda \in H^1_0(\O)|_{\Sigma}$ such that for all  $(v^k,\psi^k) \in H^1(\O^k) \times H^{-1/2}(\G^k)$
\begin{gather}\label{eq:diri1}
	\Big(	\int_{\Omega^k} \kappa \nabla u^k \cdot \nabla v^k -  \int_{\Gamma^k} \zeta^k v^k  +  \int_{\Gamma^k} u^k \psi^k \Big)  -  \int_{\Gamma^k} \lambda \psi^k = \int_{\Omega^k} f v^k,\end{gather}
and for all $\mu \in \Lambda$
\begin{gather}
	\sum_k \int_{\Gamma^k} \zeta^k \mu = 0.\label{eq:diri2}
\end{gather}
Such a problem does not directly fall in our abstract framework, as the local problems \eqref{eq:diri1} are not coercive. However, in view of Remark \ref{rem:noncoercive}, we can apply our analysis and solution strategy  to  a suitably {modified, coercive,} version of such a problem. As the average $\fint_{\G^k} \zeta^k$ can be computed directly from the data thanks to the identity
\[
\int_{\G^k} \zeta^k =  \int_{\G^k} \kappa \nabla u^k \cdot \nu^k =  
\int_{\O^k} \nabla \cdot \kappa \nabla u^k = -  \int_{\O^k} f,
\]
we can substitute the unknown $\zeta^k$ for its average free component $\phi^k = \zeta^k - \fint_{\G^k} \zeta^k$.  
We then introduce the spaces
\[
H^1_\text{av}(\Omega^k) = \{ v \in H^1(\Omega^k),\ \fint_{\Gamma^k} v =0\}, \qquad H^{-1/2}_\text{av}(\Gamma^k) = \{ \phi \in H^{-1/2}(\Gamma^k), \ \langle  \phi,1 \rangle = 0\},
\]
respectively endowed with the norms
\[
\| v \|_{H^1_\text{av}(\Omega^k) } = \sqrt{\kk} | v |_{1,\Omega^k}, \quad \| \phi \|_{H^{-1/2}_\text{av}(\Gamma^k)} = \frac 1 {\sqrt{\kk}} | \phi |_{-1/2,\Gamma^k} = \sup_{v \in H^{1}_{av}(\Omega^k)} \frac{\int_{\Gamma^k} \phi v }{| v |_{1,\Omega^k}}.
\]

Then we can rewrite our problem as: look for $(u^k,\phi^k) \in V_k = H^1(\Omega^k) \times H^{-1/2}_\text{av}(\Gamma^k)$ and  $\lambda \in \Lambda = H^1(\Omega)|_{\Sigma}$, solution of a problem of the form: for all $(v^k,\psi^k) \in V_k$, $\mu \in \Lambda$
\begin{eqnarray}\label{eq:coercive_local} \label{stabhybdd1}
a(u^k,\phi^k;v^k,\psi^k) - b(\psi^k;\lambda) &=& \langle F;v^k,\psi^k\rangle\\
\sum_k b(\phi^k;\mu) &=& \langle G^k,\mu \rangle\label{eq:glob_const_coercive}
\end{eqnarray}
with
\begin{multline}
a^k (u^k,\phi^k;v^k,\psi^k) = 	\int_{\Omega^k} \kappa \nabla u^k \cdot \nabla v^k -  \int_{\Gamma^k} \phi^k v^k  \\
+ \int_{\Gamma^k} u^k \psi^k +  [ \D^k u^k - \gamma_{\Gamma^k}^* \phi^k,   \D^k v^k - \gamma_{\Gamma^k}^* \psi^k]_{-1,\O^k},
\end{multline}
and with  $\bar \kappa^k = \fint_{\O^k} \kappa$,
\begin{gather*}
	b^k(\phi^k;\mu) = \bar \kappa^k \int_{\G^k} \phi^k\mu\\
	\langle F;v^k,\psi^k \rangle = \int_{\O^k} f v^k + [f, \D^k v^k - \gamma_{\G^k}^* \psi^k]_{-1,\O^k} + \int_{\Gamma^k} v^k \fint_{\Gamma^k} \zeta^k  \\
	\langle G,\mu \rangle = - \sum_k \int_{\G^k} \mu \fint_{\G^k} \zeta^k.
\end{gather*}
%\NOTE{What is $Z$? If I take $v = u$, $\phi = \phi$ I obtain $\nabla u = 0$ and $\D^k u - \gamma_{\G^k}^* \phi = 0$. $u^k$ is then a constant, and then $\D^k u^k = 0$ and then $\gamma^*{\Gamma^k} \phi = 0$, which implies $\phi = 0$. Then $\ker A^k = \mathbb{R} \times \{0\}$. We have the following coercivity bound: for all $v \in Z^\perp$, $\phi \in H^{-1/2}(\G^k)$ we have 
%	\[
%	a^k(u,\phi;u,\phi) \gtrsim \| u \|_{1,\Omega^k}^2 + \| \gamma_{\Gamma^k}^* \phi \|^2_{-1,\O^k} \simeq
%	\| u \|_{1,\Omega^k}^2 + \|  \phi \|^2_{-1/2,\G^k}
%	\]
%	
%}

In the above expression, $\D^k: H^1(\O^k) \to H^1(\O^k)'$ is the linear operator corresponding to  the semi-scalar product induced by $a^k$,
\[
a^k(u,v) = \langle \D^k u, v \rangle,
\]
 $\gamma_{\Gamma^k}: H^1(\O^k) \to H^{1/2}(\partial\O^k)$ is the trace operator, $\gamma_{\Gamma^k}^*$ denoting its adjoint, and $[\cdot,\cdot]_{-1,\O^k}$ stands for the $H^1_\text{av}(\O^k)'$ scalar product. We recall that the bilinear form $a^k$ does not need to play any role in the design of the black box local solvers (see Remark \ref{rem:noncoercive}). It will however play a role in the design of the preconditioner. To see that the formulation \eqref{eq:coercive_local}--\eqref{eq:glob_const_coercive} is indeed consistent with \eqref{eq:diri1}--\eqref{eq:diri2} consider the Riesz map $\Psi:(H^{1}(\O^k))' \to H^{1}(\O^k)$ and define
 \[
 [ v,   w]_{-1,\O^k} = \langle v, \Psi w \rangle, \quad \mbox{ for } v.w \in (H^1(\O^k))'.
 \]
%\[
%[ v,   w]_{-1,\O^k} = \left< v, \Psi w \right>_{(H^1(\O^k))', H^1(\O^k)}, \mbox{ for } v.w \in (H^1(\O^k))'.
%\]
It then follows that if we set $w^k = \D^k v^k - \gamma_{\Gamma^k}^* \psi^k$ and $\tilde w^k = \Psi w^k$,
%\begin{multline*}
% [ \D^k u^k - \gamma_{\Gamma^k}^* \phi^k,  w^k]_{-1,\O^k} = \left< \D^k u^k - \gamma_{\Gamma^k}^* \phi^k, \tilde w^k \right>_{(H^1(\O^k))', H^1(\O^k)}\\
% = a^k(u^k,\tilde w^k)  - \left<\phi^k,\tilde w \right>_{-\frac12,\frac12,\Gamma^k}.
%\end{multline*}
\begin{equation*}
	[ \D^k u^k - \gamma_{\Gamma^k}^* \phi^k,  w^k]_{-1,\O^k} = \left< \D^k u^k - \gamma_{\Gamma^k}^* \phi^k, \tilde w^k \right>%_{(H^1(\O^k))', H^1(\O^k)}\\
	= a^k(u^k,\tilde w^k)  - \left<\phi^k,\tilde w \right>.%_{-\frac12,\frac12,\Gamma^k}.
\end{equation*}
Using the definition of $\phi^k$ we have
\begin{multline*}
a^k(u^k,\tilde w^k)  - \left<\phi^k,\tilde w \right>%_{-\frac12,\frac12,\Gamma^k} 
= a^k(u^k,\tilde w^k)  - \left<\zeta^k,\tilde w^k \right>%_{-\frac12,\frac12,\Gamma^k} 
+ \int_{\Gamma^k} \tilde w^k \fint_{\Gamma^k} \zeta^k\\
 = \int_{\O^k} f \tilde w^k + \int_{\Gamma^k} \tilde w^k \fint_{\Gamma^k} \zeta^k =  [f, \D^k v^k - \gamma_{\G^k}^* \psi^k]_{-1,\O^k} + \int_{\Gamma^k} v^k \fint_{\Gamma^k} \zeta^k.
\end{multline*}
It is not difficult to check that the modified system has a trivial kernel.  Indeed, for $F = 0$, testing \eqref{stabhybdd1} with $(v^k,\psi^k) = (u^k,\phi^k)$ we obtain that
\[
\int_{\Omega^k} | \nabla u^k |^2 + \| \D u^k - \gamma^*_{\Gamma^k} \lambda^k \|^2_{-1,\Omega^k} = 0
\]
which implies that $u^k$ is a constant (and, consequently, that $\D u^k = 0$) and $\gamma_{\Gamma^k}^* \lambda = 0$. This last equation implies that $\lambda = 0$. Then, testing \eqref{stabhybdd1} with $(v^k,\psi^k) = (0,1)$ we obtain
\[
\int_{\Gamma^k} u^k = 0,
\] 
which finally implies $u^k = 0$.

We now consider a discretization of $\Lambda = H^1_0(\O)|_{\Sigma}$ by conforming degree $\order$ finite elements. As far as the local solvers are concerned, we point out that we essentially require them to provide us with approximate  Dirichlet to Neumann mappings. Assumption \ref{consist_weak} reduces to asking that such Dirichlet to Neumann mappings are sufficiently precise. Let us then focus on constructing the preconditioner. 
As in our formulation the control of $\lambda$ is obtained through the test function $\psi$, we can take $V_\delta^k$ of the form $V_\delta^k = \{0\} \times \Phi^k_\delta$, with $\Phi^k_\delta$ such that
\[ \inf_{\lambda \in \Lambda} \sup_{\phi \in \Phi^k_\delta} \frac{\int_{\G^k} \phi \lambda}{| \lambda |_{1/2,\G^k} \| \phi \|_{-1/2,\G^k} } \gtrsim 1, \qquad\text{with}\quad \| \phi \|_{-1/2,\Gamma} = \sup_{w\in H^1(\O^k)} \frac{\int_{\G^k} \phi w }{\| w \|_{1,\Omega^k}}.\]
We know that, provided the mesh is quasi uniform, such inf sup conditions are satisfied for $\Phi_\delta^k = \Lambda|_{\G^k}$.
We can then take any $\Phi^k_\delta$ with $\Phi_\delta^k \supseteq \Lambda|_{\G^k}$. It will be convenient to take $\Phi^k_\delta$ as the space of degree $\order$ finite elements, discontinuous at the subdomain vertices (in 2D) or at the wirebasket (in 3D), and continuous everywhere else. As $\ker A^k$ is trivial, we have that $\hLh = \Lh$ and we can set $\Pi_\sigma = \Id{\Lh}$. We can define the bilinear form $\widetilde a$ as
\[
\widetilde a^k(\phi,\psi) = \sum_{F\text{ face of }\Omega^k} \left( a^k_F(\phi,\psi) + H^{(d-1)}
\left(
\fint_F \phi 
\right)\left(
\fint_F \psi 
\right)
 \right)
 \]
 with, for all face $F$ of the subdomain $\Omega^k$, 
 \[a^k_\verde{F}(\phi,\phi) \simeq \sup_{{\zeta \in H^{1/2}(F)} \atop {\int_F \zeta= 0}} \frac{\int_F \phi\zeta}{| \zeta |_{1/2,F}}
\]
The following Lemma can be obtained by a dual argument to the one underlying the proof of Lemma \ref{lem:5.2}.
\begin{lem}\label{lem:minusprec} For all $\phi \in \Phi^k_\delta$ it holds that
	\[a(0,\phi;0,\phi) \lesssim {\widetilde a(\phi,\phi)} \lesssim \log(H/\delta)^2 a(0,\phi;0,\phi).
	\]
\end{lem}

\NOTE{Proof of the Lemma with the duality argument. I have
	\[  \log (\delta / H)^{-2}| v_\delta |_{1/2,\partial K}^2 \lesssim \underbrace{\sum_{E \text{ macro edge of }K} \left(| v_\delta |^2_{1/2,E} +  | \fint_{E} v_\delta | ^2\right) }_{| v |_{1/2,*}} \lesssim | v_k |_{1/2,\partial K}^2
	\]
	(proven in the Neumann paper). As we deal with the average free flux, seminorms are ok).

	The bilinear form 
	\[
	a(0,\phi;0,\phi) \simeq | \gamma_{\Gamma^k}^* \phi |_{-1,\Omega^k} = \sup_{v \in H^1(\Omega^k)} 
	\frac{\int_{\Gamma^k} \phi v } {| v |_{1,\Omega^k }} = 
	\]

	\begin{gather*}
		\sup_{v \in H^1(\Omega^k)} 
		\frac{\int_{\Gamma^k} \phi v } {| v |_{1,\Omega^k }} \lesssim
		\sup_{v \in H^1(\Omega^k)} 
		\frac{\int_{\Gamma^k} \phi v } {| v |_{1/2,\Gamma^k }}  
		\lesssim \sup_{v \in H^1(\Omega^k)} 
		\frac{\int_{\Gamma^k} \phi v } {| v |_{1/2,*}}
	\end{gather*}
	We have $\Lambda_\delta \subseteq \prod_F H^{-1/2}_\text{av}(F) \times \Poly{0}(F)$ endowed with the product norm. Then the dual is the product of the duals, meaning that
	\[
	\sup_{v \in H^1(\Omega^k)} 
	\frac{\int_{\Gamma^k} \phi v } {| v |_{1/2,*}} \simeq \left(
	\sum_F | \phi |_{-1/2,F}^2 + H^{d-1} | \fint_F \phi |^2 \right)^{1/2} \lesssim \sup_{v \in H^1(\Omega^k)} 
	\frac{\int_{\Gamma^k} \phi v } {| v |_{1/2,*}}.
	\] 
	Now we have that 
	\begin{multline*}
		\sup_{v \in H^1(\Omega^k)} 
		\frac{\int_{\Gamma^k} \phi v } {| v |_{1/2,*}} \lesssim \log(\delta/H)^2 \sup_{v \in H^1(\Omega^k)} 
		\frac{\int_{\Gamma^k} \phi v } {| v |_{1/2,\Gamma^k}} = 
		\log(\delta/H)^2 \sup_{v \in H^{1}(\Omega^k)} 
		\frac{\int_{\Gamma^k} \phi v^H } {| v^H |_{1,\Omega^k}} \\ \lesssim   \log(\delta/H)^2 \sup_{\atop{v \in H^{1}(\Omega^k)}
			v\text{ harmonic}
		} 
		\frac{\int_{\Gamma^k} \phi v } {| v |_{1,\Omega^k}}  \lesssim  \log(\delta/H)^2 \sup_{v \in H^1} \frac{\int_{\Gamma^k} \phi v } {| v |_{1,\Omega^k}}
	\end{multline*}
}

In this case, Assumption \ref{assmp:Vhsplit} is not satisfied, so we will have to rely on Theorem \ref{thm:preconditioning} for estimating the condition number. The bilinear form $\ta$ is, by construction, split as a coarse component plus a block diagonal component, so that it can be inverted efficiently in parallel. We then let $d = \ta$, which automatically yields $\cVsplit \simeq \CVsplit \simeq 1$, while Lemma  \ref{lem:minusprec} yields $\kappa_* = 1$ and $K^* = \log(\delta/H)^2$. Using the choices of $\widetilde a$, $d$ and $V_\delta^k$ defined above we can construct $\widehat M^{-1}$ following section \ref{sec:constructM}. The resulting preconditioner, which is, to our knowledge, novel, is similar to the substructuring preconditioner by Bramble, Pasciak and Schatz, but with a different treatment of the wirebasket. Theorem \ref{thm:preconditioning} yields the bound
\[
\kappa(\widehat M^{-1}S_h) \lesssim |\log(H/\delta)|^2
\]

\begin{rem}
	In a finite element context the former domain decomposition formulation \eqref{eq:neumann1}--\eqref{eq:neumann2} was first discussed in \cite{RT77} and typically results in mortar methods \cite{BM97,belgacem1999mortar, Wohl00}, or alternatively FETI methods, see \cite{TW05} and references therein.
	Remark that, in the framework of the latter domain decomposition formulation \eqref{eq:diri1}--\eqref{eq:diri2},  Problem \ref{feti_discrete}  substantially coincides with the three fields domain decomposition method \cite{BM94} as well as the natural domain decomposition method with non matching grids proposed in \cite{steinbach2005natural}. A coupling using the Dirichlet trace as hybrid variable can also be introduced for mixed finite element method, \cite{ACWY00}. On the discrete level any method for the weak imposition of Dirichlet conditions may be used to realize \eqref{eq:diri1}--\eqref{eq:diri2}. In particular, one may eliminate the local multiplier by applying Nitsche's method in the spirit of \cite{Sten95}, resulting in the hybridised method proposed in \cite{Eg09}. 
	%For example if $\phi^k = \nu^k \cdot \kappa \nabla u^k$ and $\psi^k = \nu^k \cdot \kappa \nabla v^k$ a hybridised version of the penalty free Nitsche method introduced in \cite{Bu12} is obtained. 
	Note however, that if the multiplier is eliminated, in general the coupling scheme can not be used in a black box fashion, since information on both the Dirichlet and Neumann traces are required. 
	Both these methods have been applied in the context of multi scale methods, see \cite{HPV13} and \cite{PS08}, with analysis of a preconditioner in the second reference.
	
\end{rem}

\section*{Acknowledgement}
EB was partially funded by the EPSRC project EP/W007460/1, {\emph{Integrated Simulation at the Exascale: coupling, synthesis and performance}}.  SB is member of the \emph{Gruppo Nazionale di Calcolo Scientifico -- Istituto Nazionale di Alta Matematica (GNCS--INdAM)} and was partially funded by  \emph{D34Health - Digital Driven Diagnostics, prognostics and therapeutics for sustainable Health care} initiative (PNC0000001) and by MUR PRIN 2022 PNRR, grant P2022BH5CB (funded by NextGeneration EU).

% ----------------------------------------------------------------
\bibliographystyle{amsplain}
\bibliography{coupling}

\newpage

\appendix

\newcommand{\Ad}{\Ah}

\section{Proof of Theorem
	\ref{thm:preconditioning}}
\label{appendix:proof}

In order to prove Theorem \ref{thm:preconditioning} it is sufficient to prove that, for all $\hlambda \in \hLh$ it holds that
\[\frac{1}{ c_*}
\sqrt{s_h(\hlambda,\hlambda)} 
\lesssim \sup_{\tphi \in \hLh'} \frac {\langle \tphi, \hlambda \rangle}
{\sqrt{\hm(\tphi,\tphi)}} \lesssim C^*
 \sqrt{s_h(\hlambda,\hlambda)},
\]
with 
\[
c_*= \sqrt{K^*} \opNorm{\ta}{\ta}{\Pi_d} \left(1 + \opNorm{\ta}{\ta} {\Bd^+ \Qsigma \Bd} \right), \qquad C^* =  \frac 1 {\sqrt{\kappa_*}}.
\]

%\COMMENT{
	%We start by proving the following lemma.
	%\begin{lem}\label{lem:prophLambda}
	%	The operator $\Bd^T : \Lh \to \Vh'$ is injective. Moreover, for all $\hlambda \in \hLh$ it holds
	%$\Bd^T \hlambda \in \Range{\Ah}$.
	%\end{lem}
	%
	%\begin{proof}
	%	Assume $\lambda \in \Lh$ and $\Bd^T \lambda = 0$. We have, for all $w \in \Vh $, 
	%	\[
	%	\langle \Bd^T \lambda , w \rangle = b(w,\lambda) = 0.
	%	\]
	%	Thanks to the inf-sup condition \eqref{infsupprecond} this implies that $\lambda = 0$, which, in turn, implies injectivity. 
	%	
	%	
	%	Let now $\hlambda \in\hLh$, and consider let $w \in \Vh \cap Z^\perp$ denote the unique solution of the following equation:
	%	\[
	%	a(w,v) = b(w,\hlambda), \quad \forall v \in  \Vh \cap Z^\perp.
	%	\]
	%	We claim that $\Ah w = \Bd^T \hlambda$ in $\Vh'$. Indeed all $v \in \Vh$ can be split as $v = z + v^\perp$ with $z \in Z$ and $v^\perp \in \Vh \cap Z^\perp$. Then we can write
	%	\begin{multline*}
		%	\langle \Ah w , v \rangle = a(w, v) = a( w, z) + a(w, v^\perp) = a(w, v^\perp) \\= b(v^\perp, \hlambda) = b(v^\perp + z, \hlambda) = \langle \Bd^T \hlambda , w \rangle.
		%	\end{multline*}
	%	
	%	\end{proof}
%
%
%}
%
We start by observing that
\begin{equation}\label{normequivtaV}
	\kappa_*  \| v \|_{V}^2 \lesssim \normta{v}^2 \lesssim K^* \| v \|_V^2.
\end{equation} 
Moreover, we have that
\[
\normta v \lesssim \snormta v 
\qquad \text{ for all $v \in \ker \Bh$}.
\]

\NOTE{Using: boundedness of $V$-orthogonal projection and coercivity of $A_\delta$ on $\ker B_\delta$, which gives us a sort of Poincar\'e bound we have
	\[
	\| v \|_{\ta} = | v |_{\ta} + \kappa_* \| \bar v \|_V \leq | v |_{\ta} + \kappa_* \| v \|_{V} \lesssim  | v |_{V} 
	\lesssim | v |_{\ta} + \kappa_* | v |_{V} \leq 2 | v |_{\ta}
	\]
	
}

We start by observing that, letting $\tphi \in \hLh'$, 
%This problem is well posed, thanks to the inf-sup on $\Bd$. We have
%\[
%\langle \tphi , \hlambda \rangle = \langle \Embed \tphi , \hlambda \rangle = \langle \Bd \Bd^+ \Embed \tphi , \hlambda \rangle = b( \Bd^+ \Embed \tphi, \hlambda ).
%\]
we can split $\Bd^+ \Embed \tphi \in \Vh$ ($\Embed$ defined by \eqref{defK}) as $\Bd^+ \Embed \tphi = z_\tphi + w^\perp_\tphi$, with $z_\tphi \in Z$, and $w_\tphi^\perp \in \Vh$ $V$-orthogonal to $Z$.  As $z \in \ker \opA = \ker \opA^T$, we have
\begin{equation}\label{reducea}
a(\Bd^+ \Embed\tphi,\Bd^+ \Embed\tphi) = a(w^\perp_\tphi,w^\perp_\tphi).
\end{equation}
As $\Bd$ is surjective, which is a consequence of the inf-sup condition  \eqref{infsupprecond}, for some $w' \in \Vh$, it holds that $\Embed \tphi = \Bd w'$. Then we can write
\begin{equation}\label{eqB1}
\Embed \tphi = \Bd w' = \Bd \Bd^+ \Bd w' = \Bd \Bd^+ \Embed \tphi.
\end{equation}

%
%Assume that
%\[K^* \| v \|_{V}^2 \geq
%\ta(v,v) \geq \kappa_* \| v \|_V^2
%\]

Let now $\hlambda \in \hLh$. Using  Lemma \ref{coerc_sh}, \eqref{normequivtaV} and the continuity of $b$, 	equations \eqref{reducea}  and \eqref{eqB1}, we can write
\begin{multline*}
\sqrt{s_h(\hlambda,\hlambda)} \gtrsim \| \hlambda \|_{\Lambda} \gtrsim  \sup_{w \in \Vh \cap Z^\perp} \frac{b(w,\hlambda)}{\| w \|_{V}}  \gtrsim
\sqrt{\kappa_*}
\sup_{w \in \Vh \cap Z^\perp}  \frac{b(w,\hlambda)}{\snormta{w}}
\geq \sqrt{\kappa_*} \sup_{\tphi \in \hLh'}\frac{b(w^\perp_\tphi,\hlambda)}{\snormta{w^\perp_\tphi}
} \\ =\sqrt{\kappa_*} \sup_{\tphi \in \hLh'}\frac{b(\Bd^+\Embed\tphi - z_\tphi,\hlambda)}{
\snormta
		{\Bd^+ \Embed \tphi}}
 =  \sqrt{\kappa_*}
\sup_{\tphi \in \hLh'}\frac{b(\Bd^+\Embed\tphi,\hlambda)}{
	\snormta
		{\Bd^+ \Embed \tphi}	} 
\\=
\sup_{\tphi \in \hLh'}\frac{ \langle \tphi,\hlambda\rangle }{
\snormta
{\Bd^+\Embed  \tphi}
	} = \sqrt{\kappa_*}	\sup_{\tphi \in \hLh'}\frac{ \langle \tphi,\hlambda\rangle }{\sqrt{\hm( \tphi,   \tphi) }},
\end{multline*}
 and we have the upper bound. 

\

Let us now prove the lower bound.  Recalling that $\Bd^+ \Bd = \Pi_d$ (see \eqref{propBdplus}), we have
\begin{multline}
\sqrt{s_h (\hlambda,\hlambda)} \lesssim \| \hlambda \|_\Lambda \lesssim \sup_{w \in \Vh} \frac{b(w,\hlambda)}{\| w \|_V} \leq \sqrt{K^*} \sup_{w \in \Vh} \frac{b(w,\hlambda)}
{
	\normta
	{\Pi_d w}
	}
\frac {
	\normta
{\Pi_d w}
 } 
{\normta w} \\
\leq \sqrt{K^*}
\opNorm{\ta}{\ta}{\Pi_d}  \sup_{w \in \Vh} \frac{b(w,\hlambda)}
{
	\normta
	{\Pi_d w}
	} =  \sqrt{K^*}	\opNorm{\ta}{\ta}{\Pi_d} \sup_{w \in \Vh} \frac{b(w,\hlambda)}
{\normta
		{\Bd^+ \Bd w}
	},
\end{multline}
where we used \eqref{infsupprecond} and \eqref{normequivtaV}.
We also have
\[
b(w,\hlambda) = \langle \Bd w,\hlambda \rangle = 
\langle \Bd \Bd^+ \Bd w, \hlambda \rangle = b(\Bd^+ \Bd w, \hlambda),
%= b(\Pi_d w,\hlambda),
\]
whence
\begin{equation}\label{eq:A4}
\sqrt{s_h(\hlambda,\hlambda)} \lesssim \sqrt{K^*}
\opNorm{\ta}{\ta} {\Pi_d }  \sup_{w \in \Vh} \frac{b(\Bd^+ \Bd w ,\hlambda )} { \normta
{\Bd^+	\Bd  w} }.
		\end{equation}
		Now, for $z \in Z$ arbitrary, 
		by the definition of $\hLh$ we have that
		\[
		b(\Bd^+ \Bd z, \hlambda) = b(z ,\hlambda) = 0.
		\]	
	Then	we can write
		\begin{equation}\label{eq:A3}
\sup_{w \in \Vh} \frac{b(\Bd^+ \Bd w ,\hlambda )} 
{ 
\normta	{\Bd^+
		\Bd  w}} 
= \sup_{w\in \Vh} \frac{b(\Bd^+ \Bd (w + z),\hlambda )} { 
\normta	{\Bd^+
	\Bd  (w+z)}
} 
\frac { 
	\normta	{\Bd^+
		\Bd  (w+z)}}{ 
\normta	{\Bd^+
	\Bd  w}} .
		\end{equation}

		The idea is now to take $z = z^*(w)$, with $z^*(w)$ such that $ \Bd (w + z^*(w))  \in \Range \Embed$. 
		This is obtained by taking $\Bd z^*  = G z^*  = - \Qsigma \Bd w$. 
		We have, with  $\tphi = [\Bd w - \Qsigma \Bd w] = [ \Bd (w + z^*(w))] = [ \Bd w ]$ (we recall that $[\phi]$ stands for the equivalence class of $\phi$), and then
		\begin{equation}\label{eq:A1}
\langle \tphi , \hlambda \rangle = \langle \Bd w - \Qsigma \Bd , \hlambda \rangle  =  \langle \Bd w , \hlambda \rangle =
%b(\Bd^+ (\Bd w - \Qsigma \Bd w),\hlambda ) = b(\Bd^+ \Bd w +  z^*,\hlambda) = 
b(\Bd^+ \Bd w,\hlambda).
\end{equation}
We now point out that for all $\phi \in \Lh'$ we have $\Embed([\phi]) = \Pi_\sigma^T \phi$, which can be seen by comparing \eqref{defK} and \eqref{defPisigma}.
We then can write
\begin{multline}\label{eq:A2}
\widehat m(\tphi,\tphi) = m (\Embed([\Bd (w +  z^*(w))]), 
\Embed([\Bd (w +  z^*(w))])
) \\= m(\Pi_\sigma^T (\Bd (w +  z^*(w)))
,\Pi_\sigma^T (\Bd (w +  z^*(w)))
\\	= \ta(\Bd^+ \Pi_\sigma^T (\Bd (w +  z^*(w))),\Bd^+ \Pi_\sigma^T (\Bd (w +  z^*(w)))) 
\\	=
\ta(\Bd^+ \Bd (w +  z^*(w)),\Bd^+ \Bd (w +  z^*(w))) , 
\end{multline}
where we could write $ \Pi_\sigma^T \Bd (w+z^*(w)) =  \Bd (w+z^*(w))$ thanks to our choice of $z^*(w)$. Plugging \eqref{eq:A1} and \eqref{eq:A2} into \eqref{eq:A3}, and the resulting bound  into \eqref{eq:A4} we obtain
\[
\sqrt{s_h(\hlambda,\hlambda)} \lesssim \sqrt{K^*} \opNorm{\ta}{\ta}{\Pi_d} \sup_{\tphi \in \hLh'} \frac {\langle \tphi, \hlambda \rangle} {\sqrt{	\widehat m(\tphi,\tphi) }}
\sup_{w \in \Vh} \ \frac{\| \Bd^+ \Bd (w + z^*(w)) \|_{\ta} }{\| \Bd^+ \Bd w \|_{\ta}}.
\]
It remains to bound 
\[
\sup_{w \in \Vh} \ \frac{\normta{\Bd^+ \Bd (w + z^*(w)) } }{ \normta{\Bd^+ \Bd w} } = \sup_{w \in \Vh} \frac{\normta{\Bd^+ (\Bd w - \Qsigma \Bd w)} }{ \normta{\Bd^+ \Bd w} }.
\]

We have
\begin{multline*}
 \normta{\Bd^+ (\Bd w - \Qsigma \Bd w) } \leq  \normta{\Bd^+ \Bd w}  +  \normta{\Bd^+ \Qsigma \Bd w  }\\=
 \normta{\Bd^+ \Bd w} + \normta{ \Bd^+\Qsigma \Bd \Bd^+ \Bd w} \\
\lesssim \left(1 + \opNorm{\ta}{\ta} {\Bd^+ \Qsigma \Bd} \right)\normta{ \Bd^+ \Bd w}.
\end{multline*}
This finally yields
\[
\sqrt{s_h(\hlambda,\hlambda)} \lesssim \sqrt{K^*}  \opNorm{\ta}{\ta}{\Pi_d} \left(1 + \opNorm{\ta}{\ta} {\Bd^+ \Qsigma \Bd} \right)
\sup_{\tphi \in \hLh'} \frac {\langle \tphi, \hlambda \rangle} {\sqrt{	\widehat m(\tphi,\tphi) }},
\]
which concludes the proof.

\section{Proof of Lemma \ref{lem:minusprec}}
We start by noting 

\[
\| \phi \|_{s,\G^k}^2 = | \phi |^2_{s,\G^k} + H^{1-2s}  | \bar \phi |^2
\]
and the $H^{-s}$ norm as
\[
\| \phi \|^2_{-s,\G^k} = | \phi |^2_{-s,\G^k} + H^{2s-1} H^{2(d-1)} | \bar \phi |^2
\]
We have
\begin{multline}
	| \phi |_{-1/2,\G^k} = \sup_{\psi \in H^{1/2}(\G^k)} \frac{
		\sum_{F \subset \G^k} \int_{F^k} (\phi \pm \bar\phi^F) \psi }
	{| \psi |_{1/2,\G^k}}\\
	\leq 
	\sup_{\psi \in H^{1/2}(\G^k)} \frac{
		\sum_{F \subset \G^k} \int_{F^k} (\phi - \bar\phi^F) \psi }
	{| \psi |_{1/2,\G^k}} +  \sup_{\psi \in H^{1/2}(\G^k)} \frac{
		\sum_{F \subset \G^k} \int_{F^k} (\bar\phi^F - \bar\phi)\bar \psi^F }
	{| \psi |_{1/2,\G^k}} \\
	\leq \sup_{\psi \in H^{1/2}(\G^k)} \frac{
		\sum_{F \subset \G^k}  | \phi |_{-1/2,F} | \psi |_{1/2,F} }
	{| \psi |_{1/2,\G^k}} +  \sup_{\psi \in H^{1/2}(\G^k)} \frac{
		\sum_{F \subset \G^k} H^{d-1} |\bar\phi^F - \bar\phi |  |\bar \psi^F - \bar \psi |}
	{| \psi |_{1/2,\G^k}}\\
	\lesssim 
	\sqrt{
		\sum_{F \subset \G^k}  | \phi |_{-1/2,F}^2
	} + \sqrt{\sum_{F \subset \G^k} H^{2(d-1)} |\bar\phi^F - \bar\phi |^2}.
\end{multline} 
This leads to
\[
| \phi |_{-1/2,\G^k}^2 \lesssim \sum_{F \subset \G^k}  | \phi |_{-1/2,F}^2 + \sum_{F \subset \G^k} H^{2(d-1)} |\bar\phi^F - \bar\phi |^2.
\]

Let us check the converse bound. There holds
\begin{multline*}
	| \phi |_{-1/2,F} = \sup_{\psi \in H^{1/2}, \fint \phi=0} \frac{\int_{F}  \phi \psi}{| \psi |_{1/2,F}} \lesssim 
	\sup_{\psi \in H^{1/2-\varepsilon}, \fint \phi=0} \frac{\int_{F}  \phi \psi}{| \psi |_{1/2-\varepsilon,F}}\\ \lesssim\underbrace{\frac 1 \varepsilon \sup_{\psi \in H^{1/2}} \frac{\int_{F}  \phi \psi}{ | \psi |_{H^{1/2-\varepsilon}_0(F)}} \lesssim \frac 1 \varepsilon \sup_{\psi \in H^{1/2}_{00}} \frac{\int_{F}  \phi \psi}{ | \psi |_{H^{1/2-\varepsilon}_0(F)}} }_{\text{density of $H^{1/2}_{00}$ in $H^{1/2-\varepsilon}_0$}}
	\lesssim \frac{h^{-\varepsilon}}{\varepsilon} \sup_{\psi \in H^{1/2}_{00}} \frac{\int_{F}  \phi \psi}{ | \psi |_{H^{1/2}_{00}(F)}}
\end{multline*}
whence
\[
| \phi |_{-1/2,F} \lesssim \log(H/h) \| \phi \|_{H^{1/2}_{00}(F)'}.
\]

\end{document}